\def \cC{\mathcal{C}}
\def \cS{\mathcal{S}}
\def \cH{\mathcal{H}}
\def \cF{\mathcal{F}}
\def \cO{\mathcal{O}}
\def \T{\mathcal{T}}
\def \P{\mathsf P}
\def \Q{\mathsf Q}
\def \E{\mathsf E}
\def \N{\mathbb{N}}
\def \R{\mathbb{R}}
\def \ud{\mathrm{d}}
\newcommand{\eps}{\varepsilon}
\newtheorem{theorem}{Theorem}[section]
\newtheorem{lemma}[theorem]{Lemma}
\newtheorem{corollary}[theorem]{Corollary}
\newtheorem{proposition}[theorem]{Proposition}
\newtheorem{definition}[theorem]{Definition}
\newtheorem{remark}[theorem]{Remark}
\newtheorem{example}[theorem]{Example}
\newtheorem{assumption}{Assumption}
\title[Mean-field games of singular control with finite fuel]{Mean-Field Games of Finite-Fuel Capacity Expansion with Singular Controls}
\author[L. Campi, T. De Angelis, M. Ghio, G. Livieri]{Luciano Campi \and Tiziano De Angelis \and Maddalena Ghio \and Giulia Livieri}
\thanks{T. De Angelis gratefully acknowledges support from EPSRC via grant EP/R021201/1; M.~Ghio and G.~Livieri acknowledge the financial support of UniCredit Bank R$\&$D group through the \textit{Dynamical and Information Research Institute} at the Scuola Normale Superiore. We thank the Associate Editor and two Referees for useful comments that improved the quality of the paper.}
\subjclass[2020]{91A15, 91A16, 93E20, 60G40, 35R35}
\keywords{Nash equilibria, mean-field games, singular controls, capacity expansion, goodwill problem, optimal stopping, free boundary problems, Lipschitz free boundary, Skorokhod reflection problem}
\address{L.~Campi: Department of Mathematics ``Federigo Enriques'', University of Milan, Via Saldini 50, 20133, Milan, Italy.}
\email{\href{mailto:luciano.campi@unimi.it}{luciano.campi@unimi.it}}
\address{T.~De Angelis: School of Management and Economics, Dept.\ ESOMAS, University of Turin, C.so Unione Sovietica 218bis, 10134, Torino, Italy; Collegio Carlo Alberto, P.za Arbarello 8, 10122, Torino, Italy}
\email{\href{mailto:tiziano.deangelis@unito.it}{tiziano.deangelis@unito.it}}
\address{M.~Ghio and G.~Livieri: Scuola Normale Superiore, Piazza dei Cavalieri 7, 56126 Pisa, Italy}
\email{\href{maddalena.ghio@sns.it}{maddalena.ghio@sns.it}}
\email{\href{giulia.livieri@sns.it}{giulia.livieri@sns.it}}
\date{\today}
\numberwithin{equation}{section}
\begin{document}

\begin{abstract}
We study Nash equilibria for a sequence of symmetric $N$-player stochastic games of finite-fuel capacity expansion with singular controls and their mean-field game (MFG) counterpart. We construct a solution of the MFG via a simple iterative scheme that produces an optimal control in terms of a Skorokhod reflection at a (state-dependent) surface that splits the state space into {\em action} and {\em inaction} regions. We then show that a solution of the MFG of capacity expansion induces approximate Nash equilibria for the $N$-player games with approximation error $\varepsilon$ going to zero as $N$ tends to infinity. Our analysis relies entirely on probabilistic methods and extends the well-known connection between singular stochastic control and optimal stopping to a mean-field framework.
\end{abstract}

\maketitle

\section{Introduction}\label{sec:intro}
Mean field games (MFGs) were first introduced in \cite{huang2006large} and  \cite{lasry2007mean} as an elegant and tractable way to study approximate Nash equilibria in nonzero-sum symmetric stochastic differential games for a large population of players with a mean-field interaction, i.e., each player interacts with the rest of the population  through its empirical distribution. The idea is that a large-population game of this type should behave similarly to its MFG counterpart, which
may be thought of as an infinite-player version of the game. Exploiting the underlying symmetry while passing to the limit with the number of players tending to infinity, the sequence of games converges in some sense to a problem in which the game structure is preserved in a simpler form for a ``representative player''. Such representative player responds optimally to the distribution of the population which,  at equilibrium, coincides with the distribution of the optimally controlled state variable. Once the limit problem is solved, its solution can typically be implemented in the $N$-player game and provides an approximate Nash equilibrium with vanishing error as $N$ tends to infinity.

From a practical point of view this approximation result can be very useful since tackling directly the $N$-player game is often unfeasible when $N$ is very large due the curse of dimensionality. In the past few years many authors from different mathematical backgrounds have studied this class of games. Two approaches have been adopted to address the study of MFGs: an analytic approach as in, e.g., the initial paper \cite{lasry2007mean}, and a probabilistic one. Here we follow the latter, which has been developed in a series of papers by Carmona, Delarue, and their co-authors (see, e.g., \cite{carmona2013mean,carmona2013probabilistic,carmona2015probabilistic,carmona2016mean}). We refer the reader to the lecture notes by \cite{cardaliaguet2012notes} and the two-volume monograph by \cite{carmona2018probabilistic} for  comprehensive presentations of MFG theory and its applications from analytic and probabilistic perspective, respectively.

The literature on MFGs is rapidly growing. Most of the papers deal with games where players use ``regular controls" in order to optimise their payoffs. Here by regular controls we mean those having a bounded impact on the velocity of the underlying dynamics.
Only few papers have studied the case of MFGs with singular controls, which is a larger class of controls allowing for unbounded changes in the velocity of the underlying process and possible discontinuities in the state trajectories. More specifically, \cite{fu2017mean} established an abstract existence result for solutions of a general MFG with singular control, using the notion of relaxed solutions. The same approach was also applied in \cite{fu2019extended} to extend the previous results to MFG with interaction through the controls as well. In both papers, the issue of finding approximate Nash equilibria in the $N$-player games is left aside. \cite{guo2018class} propose and analyse a class of $N$-player stochastic games that also includes so-called `finite fuel' controls, which will be introduced in detail later in the paper. To the best of our knowledge, only the works of \cite{cao2017approximation}, \cite{guo2019stochastic} and \cite{cao2020mfgs} tackle simultaneously MFGs and $N$-player games with singular controls. Their analysis is based on verification theorems and quasi-variational inequalities specifically designed for their settings and not amenable to simple extensions. For completeness, we also mention the two papers \cite{hu2014singular}, \cite{zhang2012relaxed}, which use a maximum principle approach to solve singular control problems with mean-field dynamics for the state variables. A class of controls closely related to singular controls is that of impulses, which has also received attention recently within MFG theory. We mention the two papers \cite{basei2019nonzero} and \cite{zhou2017mean}, where MFGs with impulse controls are considered and solved using an approach based on quasi-variational inequalities and exploiting the stationarity properties of  their settings. Finally, the article \cite{bertucci2020fokker} proves existence and uniqueness of a MFG equilibrium with impulse controls and it provides a variational characterization of the density of jumping particles arising from such problem.

\subsection{Model description}\label{sec:model} In this article, we study Nash equilibria for a class of symmetric $N$-player stochastic differential games, for large $N$, and we characterize the solutions of the associated MFG. Specifically, we consider a class of finite-fuel capacity expansion games with \textit{singular controls}. The control exerted by each agent (the ``capacity expansion'') is modelled via a bounded variation process and the ``finite-fuel'' condition prescribes that the total variation of the control be bounded by a deterministic constant (i.e., the maximum capacity expansion is capped). Moreover, each agent incurs costs proportional to the amount of fuel being used.

In order to set out our main results, we provide here a short description of the $N$-player game of capacity expansion (see Section \ref{subsec:Nplayer} for a full account). The game is set over a finite-time horizon $T$ given and fixed. We consider a complete probability space $(\Omega,\cF,\bar\P)$ equipped with a right-continuous filtration $\mathbb F:=(\cF_t)_{t\in[0,T]}$ which is augmented with all the $\bar\P$-null sets. There are $N$ players in the game and the $i$-th player, $i=1,\ldots, N$, chooses a strategy $\xi^{N,i} = (\xi^{N,i} _t)_{t \in [0,T]}$ from the set of all right-continuous non-decreasing adapted processes, affecting the player's own private state variables $(X^{N,i},Y^{N,i})$. Given a drift $a\,:\,\mathbb{R} \times [0,1] \rightarrow \mathbb{R}$ and a volatility $\sigma\,:\,\mathbb{R}\rightarrow \mathbb{R}_{+}$, the private states have dynamics

\begin{equation}
\begin{split}
& X_{t}^{N,i}= X_0 ^{i} + \int_{0}^{t}\,a(X_s^{N,i},m_s^{N})\,\ud s + \int_0^t\,\sigma(X_s^{N,i})\,\ud W_s^{i},\\
& Y_{t}^{N,i} = Y_{0-}^{i}+ \xi^{N,i}_t,\quad\quad t \in [0,T],
\end{split}
\label{eq:dynamics_Nplayer_0}
\end{equation}
where $(W^{1},\ldots,W^{N})$ is a $N$-dimensional Brownian motion. The initial conditions $(X_0^{i}, Y_{0-}^{i})$ are i.i.d.~random variables with common distribution $\nu \in \mathcal{P}(\Sigma)$, where $\mathcal{P}(\Sigma)$ is the space of all probability measures on $\Sigma := \mathbb{R} \times [0,1]$. The players interact through the mean-field term $m_t^{N}$ appearing in the drift and given by
\begin{equation}
m_t^{N} = \frac{1}{N}\sum_{i = 1}^{N} Y_t^{N,i} = \int_{\Sigma}\,y\,\mu_t^{N}(\ud x,\ud y), \quad t \in [0,T],\label{emp-mean}
\end{equation}
where $\mu_t^{N} = \frac{1}{N}\sum_{i = 1}^{N} \delta_{(X_t^{N,i}, Y_{t}^{N,i})}$ denotes the empirical probability measure of the players' states with $\delta_{z}$ the Dirac delta mass at $z \in \Sigma$. In \eqref{eq:dynamics_Nplayer_0}, each $\xi^{N,i}$ represents the investment in additional capacity made by the $i$-th player. Each player aims at maximizing an expected payoff of the form

\begin{equation}
J^{N,i} := \bar\E \left[\int_{0}^{T} e^{-rt} f(X_t^{N,i}, Y_t^{N,i})\,\ud t - \int_{[0,T]} e^{-r t}\,c_0\,\ud\xi_{t}^{N,i}\right],
\label{eq:cost_Nplayer_0}
\end{equation}
for a fixed discount rate $r \ge 0$, some cost $c_0 > 0$ and some running payoff $f$ (the same for all players). The optimisation is subject to the so-called \emph{finite-fuel constraint}: $Y_{0-} + \xi_t ^{N,i} \in [0,1]$ for all $t \in [0,T]$ and all $i=1,\ldots, N$. We are interested in computing (approximate) Nash equilibria of this $N$-player game via the MFG approach. This requires to pass to the limit as $N \to \infty$ and to identify the limiting MFG. The latter must be solved (as explicitly as possible) and the associated optimal control is then implemented in the $N$-player game for sufficiently large $N$, as a proxy for the equilibrium strategy.\vspace{+5pt}
 
Singular control problems with finite (and infinite) fuel find numerous applications in the economic literature and originated from the engineering literature in the late 60's (see \cite{bather1967sequential} for a seminal paper and, for example, \cite{benevs1980some,karatzas1985, karoui1988} for early contributions to the finite fuel case). Game versions of these problems are a natural extension of the single agent set-up and allow to model numerous applied situations. Here in particular we make assumptions on the structure of the interaction across players that are suitable to model the so-called {\em goodwill} problem (see, e.g., \cite{marinelli2007stochastic,jack2008singular,ferrari2017continuous} in a stochastic environment and \cite{buratto2002new} in a deterministic one). Specifically, players can be interpreted as firms that produce the same good (e.g., mobile phones) and must decide how to advertise it over a finite time horizon. The $i$-th firm's product has a market price that depends on the particular type/brand (e.g., Apple, Huawei, etc.) and we model that by the process $X^{N,i}$. Each firm can invest in marketing strategies in order to raise the profile of their product and its popularity. The $i$-th firm's cumulative amount of investment that goes towards advertising is modelled by the process $Y^{N, i}$, where the finite-fuel feature naturally incorporates the idea that firms set a maximum budget for advertising over the period $[0,T]$. All firms measure their performance in terms of future discounted revenues: they use a running profit function $(x,y)\mapsto f(x,y)$ and deduct the (proportional) cost of advertising $c_0 \ud \xi$. A typical example is $f(x,y)=x \cdot y^\alpha$, $\alpha\in(0,1)$, where profits are linear in the product's price $x\ge 0$ and increasing and concave as function of the total investment $y$ made towards advertising. 

From the point of view of the $i$-th firm, investing $\Delta \xi^{N,i}>0$ has a cost $c_0\Delta \xi^{N,i}$ and produces two effects. First of all it increases the popularity of the $i$-th firm's product, hence increasing the running profit to the level $f(x,y+\Delta\xi^{N,i})$ (we are tacitly assuming $y\mapsto f(x,y)$ increasing). Secondly, it has a broader impact on the visibility of the type of product (e.g., mobile phones) and will stimulate the public's demand for that good. This has a knock-on effect on the trend of the prices of all the firms that produce the same good. We model this fact via the interaction term $m^N_t$ in the price dynamics and we will assume that the drift function increases with the average spending in advertising across all companies, i.e., $m\mapsto a(x,m)$ is non-decreasing.

\subsection{Our contribution and methodology} We focus on the construction of approximate Nash equilibria for the $N$-player game through solutions of the corresponding MFG. First, we formulate the MFG of capacity expansion, i.e., the limit model corresponding to the above $N$-player games as $N\rightarrow\infty$ (Section \ref{sec:setup}). Then, under mild assumptions on the problem's data we construct a solution in feedback form of the MFG of capacity expansion (Section \ref{sec:existence}). Our constructive approach, based on an intuitive iterative scheme, allows us to determine the optimal control in the MFG in terms of an optimal boundary $(t,x)\mapsto c(t,x)$ that splits the state space $[0,T]\times\Sigma$ into an {\em action} region and an {\em inaction} region; see Theorem \ref{teo:existenceSolMFG} in Section \ref{sec:setup}. The optimal strategy prescribes to keep the controlled dynamics underlying the MFG inside the closure of the inaction region by Skorokhod reflection. Finally, whenever the optimal boundary in the MFG is Lipschitz continuous in its second variable we can show that it induces a sequence of approximate $\varepsilon_N$-Nash equilibria for the $N$-player games with vanishing approximation error at rate $O(1/\sqrt{N})$ as $N$ tends to infinity; see Theorem \ref{teo:approximation} in Section \ref{sec:approximation}. While Lipschitz regularity of optimal boundaries is in general a delicate issue, we provide sufficient conditions on our problem data that guarantee such regularity. Since our conditions are far from being necessary, in Section \ref{subsec:gbm} we also illustrate an example with a clear economic interpretation which violates those conditions and yet yields a Lipschitz boundary.

The proof of Theorem \ref{teo:existenceSolMFG} on the existence of a feedback solution for the limit MFG hinges on a well-known connection between singular stochastic control and optimal stopping (e.g., \cite{baldursson1996irreversible,karatzaShreve1984,karatzasShreve1985}), which we apply in the analysis of our iterative scheme. This approach allows us to overcome the usual difficulties arising from fixed-point arguments often employed in the literature on MFGs. Ideas around iterative schemes for the solution of MFGs date back to \cite{lasry2007mean} and were then adopted in various forms by several other authors including, e.g., \cite{gueant2012mean}, \cite{cardaliaguet2017learning} and \cite{dianetti2019submodular}. Besides offering a {\em constructive} solution method, iterative schemes may be interpreted as {\em learning procedures}. Intuitively, these schemes consist of three steps that begin with a flow of measures obtained from an educated guess based on past iterations. In each iteration the three steps are: (i) the representative agent derives an optimal strategy for the control problem associated to the initial flow of measures, (ii) using such optimal strategy the agent computes certain quantities of interest that she will use in the third step of the iteration, (iii) the agent updates the original guess for the flow of measures using the information acquired in step (ii). The procedure is then repeated.

In our case the MF interaction is via the limit function $m(t)$ obtained by the empirical average of the control $m^N_t$ when letting $N\to\infty$. Our iteration, starts with a constant function $m^{[-1]}(t)\equiv 1$ which is updated at each subsequent step and denoted by $m^{[k]}$ for $k\ge 0$. The learning procedure we adopt is to simply compute $m^{[k]}$ as the average of the optimal control used in the $(k\!-\!1)$-th step (as best response to the previous one). This approach is in a similar spirit to the one used in \cite{gueant2012mean} in an analytical study of MFG equations with quadratic Hamiltonian. Other learning procedures in the existing literature prescribe, for example, to take the average flow of measures over the past iterations \citep[see e.g.][]{cardaliaguet2017learning}.

As a byproduct of our approach we also obtain that a connection between singular control problems of capacity expansion and problems of optimal stopping holds in the setting of our MFG. The finite-fuel condition is not a structural condition and it is clear that our choice of $y\in[0,1]$ is not restrictive: indeed, we could equally consider $y\in[0,\bar y]$ for any $\bar y>0$ (see Remark \ref{rem:fuel}). This is suggestive that our results may be extended to the infinite-fuel setting by considering sequences of problems with increasing fuel and limiting arguments. However, since this extension is non-trivial, we leave it for later work and here focus on the finite-fuel case.

\subsection{Organization of the paper} In Section \ref{sec:setup}, we formulate the MFG of capacity expansion, we state the standing assumptions on the coefficients in the underlying dynamics and on the profit function, and we state the existence result for the MFG (Theorem \ref{teo:existenceSolMFG}). In Section \ref{sec:existence} we prove Theorem \ref{teo:existenceSolMFG} by constructing a solution of the MFG in feedback form via an iterative scheme. In particular, we characterise the optimal control in terms of the solution to a Skorokhod reflection problem at an optimal boundary (surface) that splits the state space $[0,T]\times\Sigma$ into {\em action} and {\em inaction} regions. Finally, in Section \ref{sec:approximation} we formally introduce the symmetric $N$-player game of capacity expansion and we show that the solution of the MFG found in Section \ref{sec:existence} induces approximate Nash equilibria for the $N$-player games, with vanishing error of order $O(1/\sqrt{N})$ as $N\to\infty$.

\subsection{Frequently used notations}\label{sec:notation} We conclude the Introduction with a summary of notations that will be used throughout the paper. 

Let $\Sigma := \mathbb{R} \times [0,1]$ and let $\mathcal{P}(\Sigma)$ denote the set of all probability measures on $\Sigma$ equipped with the Borel $\sigma$-field $\mathcal B(\Sigma)$. Let $\mathcal P_2 (\Sigma)$ be the subset of $\mathcal P(\Sigma)$ of probability measures with finite second moment. The set $\Sigma$ and the $N$-fold product space $\Sigma^N$ are the state spaces for the controlled processes $(X,Y)$ and $(X^{N},Y^{N})$ that are underlying the MFG and the $N$-player game, respectively. Since our problems are set on a finite-time horizon, we also consider ``time" as a state variable and will use the state space $[0,T]\times\Sigma$. Given a set $A\subset [0,T]\times\Sigma$ we denote its closure by $\overline A$. In some cases we will also need $\Sigma':=\R\times(0,1]$ and $\Sigma^\circ:=\R\times(0,1)$. 

Given a filtered probability space $\Pi:=(\Omega, \mathcal{F}, \mathbb{F} =(\mathcal{F}_{t})_{t \geq 0}, \bar \P)$ satisfying the usual conditions and a $\cF_0$-measurable random variable $Z\in[0,1]$, we denote 
\begin{align*}
\Xi^\Pi(Z):= \big \{ \xi:\:&\text{$(\xi_t)_{t\ge 0}$ is $\mathbb F$-adapted, non-decreasing,  right-continuous,}\nonumber\\
&\text{ with $\xi_{0-}=0$ and $Z+\xi_t\in [0,1]$ for all $t\in[0,T]$, $\bar\P$-a.s.} \big \}.
\end{align*}
The set $\Xi^\Pi(Z)$ will be the set of admissible strategies for the players in our games. The random variable $Z$ will be replaced by the initial value of the process $Y$ (for the MFG) or of the process $Y^{N,i}$ (for the $i$-th player in the $N$-player game). Often we will drop the dependence of $\Xi$ on the probability space $\Pi$ and the random variable $Z$, as no confusion shall arise. Notice that the controls in $\Xi^\Pi(Z)$ are of open-loop type, i.e., they are general progressively measurable maps $(t,\omega)\mapsto \xi_t(\omega)$. We show in Theorem \ref{teo:existenceSolMFG} that the MFG admits an optimal control in feedback form which is then used to construct $\eps$-Nash equilibria for the $N$-player games in Theorem \ref{teo:approximation}. Those equilibria are in closed-loop feedback form in the sense that each player's control depends in a non-anticipative way on the state variables.

\section{The mean-field game: setting and main results}\label{sec:setup}

In this section we set-up the mean-field game associated with the $N$-player game described above and we state the main result concerning the existence and structure of the optimal control for this game; see Theorem \ref{teo:existenceSolMFG}. Later, in Section \ref{sec:approximation} we will link the MFG to the $N$-player game. Below we use the notation introduced in Section \ref{sec:notation}.

\subsection{The mean-field game}\label{subsec:mfg}

Let $\Pi=(\Omega, \mathcal{F}, \mathbb{F} =(\mathcal{F}_{t})_{t \geq 0}, \bar{\P})$ be a filtered probability space satisfying the usual conditions and supporting a one-dimensional $\mathbb F$-Brownian motion $W$. Notice that the initial $\sigma$-field $\mathcal F_0$ is not necessarily trivial. 

Let $(X_0, Y_{0-})$ be a two-dimensional $\mathcal F_0$-measurable random variable with joint law $\nu \in \mathcal{P}(\Sigma)$ independent of the Brownian motion, and let $\xi\in\Xi^\Pi(Y_{0-})$ be an admissible strategy. Then, given a bounded Borel measurable function $m : [0,T] \rightarrow [0,1]$, for all $t \in [0,T]$ we define the 2-dimensional, degenerate, controlled dynamics
\begin{equation}\label{eq:dynamics_mfg_01}
\begin{split}
X_t &= X_0 + \int_{0}^{t} a(X_s, m(s))\,\ud s +   \int_{0}^{t} \sigma(X_s) \ud W_s,\\
Y^\xi_t &= Y_{0-} + \xi_t.
\end{split}
\end{equation}
The goal of the ``representative player'' consists of maximizing over the set of all admissible strategies $\xi \in \Xi^\Pi(Y_{0-})$ the following objective functional
\begin{equation}\label{eq:J0}
J(\xi)=\bar \E\left[\int_0^{T}e^{-rt}f(X_t,Y^\xi_t)\ud t-\int_{[0,T]} e^{-rt}c_0\ud\xi_t\right],
\end{equation}
where $\bar \E$ is the expectation under $\bar \P$, $f$ is a running payoff function, $c_0 > 0$ is a cost and $r\ge 0$ a discount rate. Assumptions on all the coefficients appearing in the state variables' dynamics and in the objective functional will be given below. The integral with respect to the positive random measure $\ud \xi$ includes possible atoms at the initial and terminal time (corresponding to possible jumps of $\xi$). 
The value of the optimization problem for the representative player is denoted by
\[
V^{\nu} = \sup_{\xi \in \Xi(Y_{0-})}\,J(\xi).
\]
\begin{remark}
We observe that a slightly more general running cost $f(X_t,Y_t,m(t))$, depending also on the function $m$, could be considered. In that case, in addition to Assumption \ref{ass:f} we would need to require $m\mapsto \partial_y f(x,y,m)$ (continuous) non-decreasing. 
In particular, that extra assumption would allow us to prove Proposition \ref{prop:OSvalueFunc}--(iii) while the rest of our analysis would remain unchanged.
\end{remark}
\begin{remark}\label{rem:fuel}
The choice $Y \in [0,1]$ in the definition of the set $\Xi$ of admissible strategies is with no loss of generality and we could equally consider $Y \in [0,\bar y]$ for $\bar y >0$. The assumption of finite fuel is consistent with real-world applications, where a firm would set aside a certain budget to be spent over a given period $[0,T]$. 
\end{remark}

Since we are interested in the MFG that arises from the $N$-player game \eqref{eq:dynamics_Nplayer_0}-\eqref{eq:cost_Nplayer_0}, in the limit as $N\to \infty$, it is natural to seek for an admissible optimal strategy $\xi$ (given $m$) such that the following consistency condition holds
\begin{align}\label{eq:cons0}
m(t)=\bar \E[\,Y^\xi_t],\quad t\in[0,T].
\end{align}
The consistency condition will appear in the precise definition of MFG solution which will be given in Definition \ref{def:solMFG} below.
 
In order to develop our methodology, it is convenient to state a version of the MFG parametrized by a triple $(t, x, y)$, where $t \in [0,T]$ indicates the initial time and $(x, y) \in \Sigma$ denotes any realization of the states $(X_t, Y_{t-})$. Let us start by denoting 
\[
\P_{0,x,y}(\,\cdot\,):=\bar \P(\,\cdot\,|X_0=x,Y_{0-}=y)
\] 
and recall that $(X_0,Y_{0-})\sim\nu$. Then 
\begin{align}\label{eq:pbar}
\bar{\P}(\,\cdot\,)=\int_{\Sigma} \P_{0,x,y}(\,\cdot\,)\,\nu(\ud x, \ud y)\quad\text{and}\quad \bar{\E}[\,\cdot\,]=\int_{\Sigma} \E_{0,x,y}[\,\cdot\,]\,\nu(\ud x, \ud y).
\end{align}
The dynamics \eqref{eq:dynamics_mfg_01} conditional on the initial data $(t,x,y) \in [0,T]\times\Sigma$ read 
\begin{equation}\label{eq:dynamics_mfg_03}
\begin{split}
&X_{t+s}^{t,x} = x + \int_{0}^{s} a(X_{t+u}^{t,x}, m(t+u))\,\ud u +   \int_{0}^{s} \sigma(X_{t+u}^{t,x}) \ud W_{t+u},\\
&Y_{t+s}^{t,x,y;\,\xi} = y + (\xi_{t+s}-\xi_{t-}),\quad s \in [0,T-t], 
\end{split}
\end{equation}
where we notice for future reference that $\ud W_{t+u}=\ud (W_{t+u}-W_t)$.
Since the increments of the control $\xi\in\Xi^\Pi(Y_{0-})$, after time $t$, may in general depend on $(t,x,y)$, we account for that dependence by denoting $Y^{t,x,y;\,\xi}$ (and $\xi^{t,x,y}$ if necessary).
Instead, given a bounded measurable function $m$, the dynamics of $X$ only depends on the initial condition $X_t=x$, which motivates the use of the notation $X^{t,x}$. For the original case of the process started at time zero (i.e., $t=0$), we use the simpler notation $(X^x_s,Y^{x,y;\,\xi}_s)_{s\in[0,T]}$. It is worth noticing that the dynamics of $X$ also depends on the choice of the function $m$ appearing in the drift but we omit this dependence in our notation as it should cause no confusion in the rest of the paper.

Consistently with the notation introduced so far we will often use $\P_{t,x,y}(\,\cdot\,)=\bar \P(\,\cdot\,|X_t=x,Y_{t-}=y)$ for simplicity. For any $(\cF_{t+s})_{s\in[0,T-t]}$-stopping time $\tau\in[0,T-t]$ (i.e., such that $\{t+\tau\le s\}\in\cF_{t+s}$) and any bounded measurable function $g$ we have
\[
\bar\E\left[g(t+\tau,X^{t,x}_{t+\tau},Y^{t,x,y;\,\xi}_{t+\tau})\right]=\E_{t,x,y}\left[g(t+\tau,X_{t+\tau},Y^{\xi}_{t+\tau})\right],
\]
and, moreover, we use $\P_{x,y}=\P_{0,x,y}$ for the special case $t=0$. When no confusion shall arise we drop the subscript from $\P_{t,x,y}$ and $\E_{t,x,y}$ and simply use $\P$ and $\E$.

It is clear that given $\xi\in\Xi^\Pi(Y_{0-})$ the process $\hat \xi_{t+s}:=\xi_{t+s}-\xi_{t-}$ is right continuous, non-decreasing and adapted with $\hat \xi_{t-}=0$. Moreover, $y+\hat\xi\in[0,1]$, $\P_{t,x,y}$ a.s. (i.e., conditionally on $(X_t,Y^\xi_{t-})=(x,y)$) because $\xi\in\Xi^\Pi(Y_{0-})$. Motivated by this observation, for $t\in(0,T]$ it is useful to introduce the set 
\begin{equation}\label{eq:Xitxy}
\begin{aligned}
&\Xi^\Pi_{t,x}(y):= \big \{ \xi:\: \text{$\xi_{u}=0$ for each $u \in [0,t)$,}\\
&\qquad\qquad\qquad\:\text{$(\xi_{t+s})_{s\ge 0}$ is $(\mathcal F_{t+s})_{s\ge 0}$-adapted, non-decreasing, right-continuous,}\\
&\qquad\qquad\qquad\:\:\text{with $y+\xi_{t+s}\in [0,1]$ for all $s\in[0,T-t]$, $\P_{t,x,y}$-a.s.} \big \}.
\end{aligned}
\end{equation}
With a slight abuse of notation $\Xi^\Pi_{0,x}(y)=\Xi^\Pi_x(y)$. Here $\Pi$ is fixed, so we can drop the superscript in the definition of the set of admissible controls.

Assuming that the mapping $(x,y)\mapsto \E_{x,y}[Y^{\xi}_t]$ is measurable for any admissible $\xi$, we can express the consistency condition \eqref{eq:cons0} as
\[
m(t) =\int_\Sigma\E_{x,y}[Y^{\xi}_t]\nu(\ud x,\ud y) = \int_\Sigma \int_{\Sigma} y' \mu^{x,y;\,\xi}_{t}(\ud x', \ud y') \nu(\ud x,\ud y),
\]
where $\mu^{x,y;\,\xi}_t := \mathcal{L}(X^{x}_t, Y^{x,y;\,\xi}_t) \in \mathcal{P}(\Sigma)$ is the law of the pair $(X^{x}_t, Y^{x,y;\,\xi}_t)$ and the integral with respect to $\nu(\ud x,\ud y)$ accounts for the fact that $(X_0,Y_{0-})\overset{d}\sim\nu$.

Turning our attention to the optimisation problem, we have that the maximal expected payoff associated with a condition $(t,x,y) \in  [0,T]\times\Sigma$ is given by 
\begin{equation}\label{eq:P0}
\begin{split}
v(t,x,y) &:= \sup_{\xi \in \Xi_{t,x}(y)}\,J(t,x,y ; \xi)\quad\text{with}\\
J(t,x, y; \xi) &:= \E_{t,x,y}\left[\int_{0}^{T-t}\!\! e^{- r s} f(X_{t+s}, Y_{t+s}^{\xi})\,\ud s -\! \int_{[0,T-t]}  e^{-r s} c_0 \ud\xi_{t+s}\right].
\end{split}
\end{equation}
The function $v$ depends also on $m$ via the dynamics of $X$ in \eqref{eq:dynamics_mfg_03}, so we should write $v(t,x,y;m)$. However, we drop the dependence on $m$ as this should cause no confusion. We will show that for each $(x,y)\in\Sigma$ we can find an optimal control $\xi^*$, jointly measurable in $(\omega, x,y)$, for the problem above such that $(x,y)\mapsto J(0,x, y; \xi^*)$ is measurable. Then the initial objective function in \eqref{eq:J0} and the optimization problem in \eqref{eq:P0} are easily linked by averaging the latter over the initial condition $(X_0 , Y_{0-})\overset{d}{\sim}\nu \in \mathcal{P}(\Sigma)$. That is:
\begin{equation}\label{Vnu}
V^{\nu} = \sup_{\xi \in \Xi(Y_{0-})}\,J(\xi) = \sup_{\xi \in \Xi(Y_{0-})}\int_{\Sigma} J(0,x,y; \xi)\nu(\ud x, \ud y)=\int_{\Sigma} v(0,x,y)\nu(\ud x, \ud y).
\end{equation}
Indeed, on the one hand we have 
\begin{align*}
V^{\nu} &=  \sup_{\xi \in  \Xi(Y_{0-})} \int_{\Sigma} J(0, x, y; \xi)\nu(\ud x, \ud y) 
\leq  \int_{\Sigma} v(0, x, y)\,\nu(\ud x, \ud y) = \bar{\E}[v(0, X_0, Y_0)].
\end{align*}
On the other hand we have
\begin{align*}
\bar{\E}[v(0, X_0, Y_0)]& = \int_{\Sigma} J(0, x, y ; \xi^{*})\,\nu(\ud x, \ud y) \\
& = \int_{\Sigma} \E_{0, x, y}\left[\int_{0}^{T} f(X_s, Y_s^{\xi^{*}})\,\ud s - \int_{0}^{T} c_0\,\ud \xi^{*}_s\right]\,\nu(\ud x, \ud y)\\
& = \bar{\E} \left[\int_{0}^{T}  f(X_s, Y_s^{\xi^{*}})\,\ud s - \int_{0}^{T} c_0\,\ud \xi^{*}_s \right] \leq V^{\nu},
\end{align*}
where we used \eqref{eq:pbar} in the third equality and, for the final inequality, $\xi^{*}\in\Xi^\Pi_x(y)$ conditionally on $(X_0,Y_{0-})=(x,y)$ for all $(x,y)\in\Sigma$.

The problem formulation \eqref{eq:P0} can be interpreted as a game between a continuum of agents (see, e.g., \cite{lacker2019mean} and \cite{cardaliaguet2018mean}) indexed by $(x,y)\in\Sigma$. Each agent is assigned a ``type'' $(x,y)$ based on the realization of i.i.d.\ random variables $(X_0, Y_{0-})$. The game starts after the random assignment and $v(0, x, y)$ is the value of the optimization problem faced by an agent of ``type'' $(x,y)$ (i.e., the value of the game {\em ex-post}). Instead, the optimization problem specified by \eqref{eq:dynamics_mfg_01}-\eqref{eq:J0} is the one faced by any of the agents before the random assignment of $(X_0, Y_{0-})$. In this interpretation $V^\nu$ is the value of the game before the players know their ``type'' (i.e., the value of the game {\em ex-ante}).

Now we define solutions of the MFG of capacity expansion.
\begin{definition}[Solution of the MFG of capacity expansion]\label{def:MFGsol} 
A solution of the MFG of capacity expansion with initial condition $(X_0,Y_{0-})\sim\nu\in\mathcal P_2(\Sigma)$ is a pair $(m^* , \xi^*)$ with $m^* :[0,T]\rightarrow[0,1]$ a measurable function and $\xi^* \in \Xi(Y_{0-})$ such that:
\begin{enumerate}
\item[(i)] {\em(Optimality property)}. $\xi^*$ is optimal, i.e.,
\begin{equation}
J(\xi^* )=V^{\nu}=\sup_{\xi \in\Xi}\bar{\E}\left[\int_0^{T}e^{-rt}f(X^*_t,Y^\xi_t)\ud t-\int_{[0,T]} e^{-rt}c_0\ud \xi_t\right],
\nonumber
\end{equation}
where $(X^*,Y^\xi)$ is a solution of Eq.\ \eqref{eq:dynamics_mfg_01} associated to $(m^*,\xi)$.
\item[(ii)] {\em(Mean-field property)}. Letting $(X^* , Y^* )$ be the solution of Eq.\ \eqref{eq:dynamics_mfg_01} associated to $(m^*, \xi^*)$, the consistency condition holds, i.e., 
\[ m^* (t)=\int_{\Sigma}\E_{x,y}[Y^{*}_t]\nu(\ud x,\ud y),\] 
for each $t\in[0,T]$.
\end{enumerate}
We will say that a solution $\xi^*$ of the MFG is in \emph{feedback form} if we have $\xi^*_t=\eta(t,X^*,Y_{0-})$, $t\in[0,T]$, for some non-anticipative mapping 
\[
\eta:[0,T]\times C([0,T]; \mathbb R)\times[0,1] \to [0,1]
\]
(i.e., such that $\eta(t,X^*,Y_{0-})=\eta(t,(X^*_{s\wedge t})_{s\in[0,T]},Y_{0-})$).
\label{def:solMFG}
\end{definition}
We observe that the definition of MFG solution above mimics the structure of a Nash equilibrium (NE) in classical game theory. Indeed, for a NE we first need to compute the best response of each player while keeping the strategies of her competitors fixed, and then we obtain the equilibrium as a fixed point of the best response map. Likewise, the optimality condition (i) corresponds to computing the best response against a given behaviour of the population described by $m^*$; condition (ii) is a fixed point condition, stating that $m^*$ has to be consistent with the best response of the ``representative player".

\subsection{Assumptions and main result}\label{sub:assumption}
Before stating our main result regarding the existence and structure of the solution to the MFG, we list below the assumptions needed in our approach.

\begin{assumption}[Coefficients of the SDE]\label{ass:SDE}
For the functions $a:\Sigma\rightarrow\R$ and $\sigma:\R\rightarrow(0,\infty)$ the following holds:
\begin{itemize}
\item[(i)] $a$ and $\sigma$ are Lipschitz continuous with constant $L>0$, i.e., for all $x,x'\in\R$ and $m,m'\in[0,1]$, we have
\begin{equation*}
\left|a(x,m)-a(x',m')\right|+\left|\sigma(x)-\sigma(x')\right|\leq L(\left| x-x' \right|+\left| m- m' \right|).
\end{equation*}
\item[(ii)] The mapping $m\mapsto a(x,m)$ is non-decreasing on $[0,1]$ for all $x\in\mathbb R$. 
\end{itemize}
\end{assumption}
Part (i) of the assumption guarantees that given any Borel measurable function $m:[0,T]\to [0,1]$ the first equation in \eqref{eq:dynamics_mfg_03} admits a unique strong solution (see, e.g., \cite{karatzasShreve}, Theorem 5.2.9). Moreover, by a well-known application of Kolmogorov-Chentsov's continuity theorem, there exists a modification $\tilde X$ of $X$ which is continuous as a random field, i.e., $(t,x,s)\mapsto \tilde X^{t,x}_{t+s}$ is continuous $\P$-a.s.~(see, e.g., \cite{karatzasShreve}, pp.~397-398, or \cite{baldi}, Theorem 9.9). From now on we tacitly assume that we always work with such modification and we denote it again by $X$.

Part (i) of the assumption could be relaxed but at the cost of additional technicalities in the proofs. In principle we only need sufficient regularity on the coefficients to guarantee existence of a unique strong solution for $X$ which is also continuous with respect to its initial datum $(t,x)$. Part (ii) instead is instrumental in our construction of the optimal control in the MFG and will be used later for a comparison result (Lemma \ref{lem:comparisonTime}). Notice that (ii) is well-suited for the application to the {\em goodwill} problem described in Section \ref{sec:model} in the Introduction. Typical examples that we have in mind for the drift are $a(x,m)=(m-x)$ (mean-reverting), $a(x,m)=m x$ (geometric Brownian motion) and $a(x, m)=m$ (arithmetic Brownian motion).

Next we give assumptions on the running profit appearing in the optimisation problem and let $\Sigma'=\R\times(0,1]$.
\begin{assumption}[Profit function]\label{ass:f}
The running profit $f:\Sigma\rightarrow[0,\infty)$ is continuous and the partial derivatives $\partial_y f$ and $\partial_{xy} f$ exist and are continuous on $\Sigma'$. Furthermore, we have
\begin{itemize}
\item[(i)] Monotonicity: $x\mapsto f(x,y)$, $y\mapsto f(x,y)$ and $x\mapsto\partial_y f(x,y)$ are non-decreasing, with
\begin{equation} \label{ass:partial-y-f}
\lim_{x\rightarrow -\infty}\partial_yf(x,y)< rc_0 < \lim_{x\rightarrow +\infty}\partial_yf(x,y);
\end{equation}
\item[(ii)] Concavity: $y\mapsto f(x,y)$ is strictly concave for all $x\in\mathbb{R}$.
\item[(iii)] The mixed derivative is strictly positive, i.e., $\partial_{xy}f> 0$ on $\mathbb R\times(0,1)$.
\end{itemize} 
\end{assumption}
The set of assumptions above is in line with the literature on irreversible investment and is fulfilled for example by profit functions of Cobb-Douglas type (i.e., $f(x,y)=x^\alpha y^\beta$ with $\alpha\in[0,1]$, $\beta\in(0,1)$ and $x>0$); see examples in Section \ref{sec:approximation} and Remark \ref{rem:statespace}.\\

We conclude with some standard integrability conditions that guarantee that the problem is well-posed and will allow us to use the dominated convergence theorem in some of the technical steps in the proofs.
\begin{assumption}[Integrability]\label{ass:int}
There exists $p>1$ such that, given any Borel measurable $m:[0,T]\to [0,1]$ and letting $X$ be the associated solution of the SDE \eqref{eq:dynamics_mfg_03}, we have 
\begin{eqnarray*}
\E_{t,x,y}\left[\int_0^{T-t}e^{-rs}\Big(\left|f(X_{t+s},y)\right|^{p}+\left|\partial_yf(X_{t+s},y)\right|^{p}\Big)\ud s\right]<\infty,
\end{eqnarray*}
for all $(t,x,y)\in[0,T]\!\times\! \Sigma'$. Finally, $\nu\in\mathcal P_{2}(\Sigma)$.
\end{assumption}
\begin{remark}[State space]\label{rem:statespace}
For specific applications it may be convenient to restrict the state space of the process $X$ to the positive half-line $[0,\infty)$ or to a generic (possibly unbounded) interval $(\underline x,\overline x)$. In those cases the assumptions above and the further ones we will make in the next sections can be adapted in a straightforward manner. In particular the limits in \eqref{ass:partial-y-f} are amended by letting $x$ tend to the endpoints of the relevant domain. If the end-points of the domain are inaccessible to the process $X$ all our arguments of proof continue to hold up to trivial changes in the notation. For a more general boundary behaviour of the process some tweaks may be needed on a case by cases basis.
\end{remark}

We are now ready to state the main results concerning the MFG described above. The proof requires a number of technical steps and hinges on a iterative method whose details are provided in Section \ref{sec:existence}.

\begin{theorem}[Solution of the MFG of capacity expansion]\label{teo:existenceSolMFG} Suppose Assumptions \ref{ass:SDE}, \ref{ass:f} and \ref{ass:int} hold. Then, there exists a upper-semi continuous function $c: [0,T] \times \R \to [0,1]$, with $t\mapsto c(t,x)$ and $x\mapsto c(t,x)$ both non-decreasing, such that the pair $(m^*, \xi^*)$ with
\[ \xi_t ^* := \sup_{0\le s\le t} (c(s,X^* _s)-Y_{0-})^+, \quad m^* (t) := \int_\Sigma \E_{x,y} \left[Y_t ^* \right] \nu(\ud x,\ud y), \quad t \in [0,T],\]
is a solution of the MFG as in Definition \ref{def:solMFG}.
\end{theorem}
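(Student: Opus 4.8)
The plan is to construct the pair $(m^*,\xi^*)$ via the iterative scheme sketched in the Introduction and to verify the two defining properties of a MFG solution in Definition \ref{def:solMFG}. I would start from the constant flow $m^{[-1]}(t)\equiv 1$ and define recursively $m^{[k]}$ as the mean of the optimally controlled state $Y$ in the singular control problem \eqref{eq:P0} associated with the drift $a(\cdot,m^{[k-1]}(\cdot))$. The crucial input at each step is the classical connection between finite-fuel singular control and optimal stopping: differentiating (formally) the value function $v(t,x,y;m^{[k-1]})$ in the fuel variable $y$ turns the control problem into an optimal stopping problem whose value, call it $u^{[k]}(t,x,y)$, identifies a continuation region and a stopping region. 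The boundary of the stopping region is a surface $(t,x)\mapsto c^{[k]}(t,x)$ valued in $[0,1]$, and the corresponding optimal control is the Skorokhod reflection $\xi^{[k]}_t=\sup_{0\le s\le t}(c^{[k]}(s,X_s)-Y_{0-})^+$ that keeps $(t,X_t,Y_t)$ in the closure of the inaction region $\{y< c^{[k]}(t,x)\}$. This step relies on Assumption \ref{ass:f}: strict concavity of $y\mapsto f(x,y)$ gives a well-defined stopping problem for $\partial_y v$, the monotonicity of $x\mapsto \partial_y f$ together with $\partial_{xy}f>0$ yields monotonicity of $c^{[k]}$ in $x$, and the limits in \eqref{ass:partial-y-f} ensure the boundary takes values in $[0,1]$ and is nontrivial.

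Next I would establish monotonicity of the scheme: because $m\mapsto a(x,m)$ is non-decreasing (Assumption \ref{ass:SDE}(ii)) and $x\mapsto\partial_y f(x,y)$ is non-decreasing, a larger drift makes the underlying $X$ stochastically larger, hence raises the value of continuing in the stopping problem, hence shrinks the stopping region and lowers the boundary $c^{[k]}$; consequently the optimally controlled $Y$ is pushed up later and on average less, so that $m^{[k]}\le m^{[k-1]}$ pointwise — starting from $m^{[-1]}\equiv 1$, the sequence $(m^{[k]})_{k\ge 0}$ is non-increasing and bounded below by $0$. This requires the comparison result alluded to as Lemma \ref{lem:comparisonTime} and a monotone dependence of the Skorokhod reflection on the reflecting boundary. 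By monotone convergence there is a limit $m^*(t):=\lim_k m^{[k]}(t)\in[0,1]$, measurable as a decreasing limit of (at least measurable) functions; the integrability Assumption \ref{ass:int} and dominated convergence upgrade this to convergence of the relevant expectations. The boundaries $c^{[k]}$ should converge to a limiting surface $c$; I would obtain upper-semicontinuity of $c$ and monotonicity in $t$ and in $x$ by passing these properties (which each $c^{[k]}$ inherits from the structure of the stopping problem, using that $t\mapsto$ time-to-maturity shrinks and using Assumption \ref{ass:f}(iii)) to the limit, taking care that monotone limits of u.s.c.\ monotone functions are u.s.c. and monotone.

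Finally I would identify $(m^*,\xi^*)$ with $\xi^*_t=\sup_{0\le s\le t}(c(s,X^*_s)-Y_{0-})^+$ as a genuine MFG solution. For the optimality property (i), I would show that the limiting boundary $c$ is exactly the optimal stopping boundary for the control problem driven by $a(\cdot,m^*(\cdot))$ — this follows from stability of the stopping problems under the convergence $a(\cdot,m^{[k]})\to a(\cdot,m^*)$, which forces the continuation regions to converge and hence $c^{[k]}\to c$ to be the correct boundary — and then invoke the verification argument that the Skorokhod reflection at an optimal-stopping boundary is an optimal singular control. For the mean-field property (ii), the consistency condition $m^*(t)=\int_\Sigma\E_{x,y}[Y^*_t]\,\nu(\mathrm dx,\mathrm dy)$ must hold in the limit: $m^{[k]}(t)$ is by construction the mean of the state $Y^{[k]}$ reflected at $c^{[k]}$ under drift $a(\cdot,m^{[k-1]})$, and I would show both that $Y^{[k]}_t\to Y^*_t$ (continuity of the Skorokhod map in the boundary and continuity of $X$ in the drift) and that $m^{[k]}(t)\to m^*(t)$, so the fixed-point relation is preserved. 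The main obstacle I anticipate is precisely the passage to the limit in the boundaries and the associated stability of the Skorokhod reflection problem under simultaneous perturbation of the drift (through $m$) and of the reflecting surface $c^{[k]}$ — in particular controlling the controlled dynamics near points where $c$ may be discontinuous and making sure the limiting $\xi^*$ is admissible (valued so that $Y_{0-}+\xi^*_t\in[0,1]$) and jointly measurable in $(\omega,x,y)$; this is where the monotone structure of the scheme, the u.s.c.\ regularity of $c$, and Assumption \ref{ass:int} must be combined carefully. (Lipschitz regularity of $c$ in $x$, needed later for Theorem \ref{teo:approximation}, is not required here and I would not address it in this proof.)
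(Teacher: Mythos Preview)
Your overall plan matches the paper's proof closely: an iterative scheme starting from $m^{[-1]}\equiv 1$, the singular-control/optimal-stopping connection at each step, monotonicity of the scheme, and passage to the limit with verification of the two MFG properties. However, two directional errors in your description would derail the argument if carried through.

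First, the geometry of the inaction region is reversed. Since $y\mapsto u_n(t,x,y)$ is non-increasing and $c_n(t,x)=\inf\{y:u_n(t,x,y)<c_0\}$, the continuation (inaction) region is $\{y>c_n(t,x)\}$, not $\{y<c_n(t,x)\}$. The Skorokhod reflection pushes $Y$ \emph{upward} to keep $(t,X_t,Y_t)$ in $\{y\ge c_n(t,x)\}$.

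Second, and more consequential, your monotonicity argument has the wrong sign. A larger $m$ gives a larger drift, hence a stochastically larger $X$, hence a larger running cost $\partial_y f(X,y)$ in the minimisation problem for $u_n$; this makes $u_n$ \emph{larger}, the stopping region $\{u_n=c_0\}$ \emph{larger}, and the boundary $c_n$ \emph{higher} (not lower). Consequently the reflected $Y$ is pushed up \emph{more}, and the best-response map $m\mapsto \bar\E[Y^{m,*}_t]$ is order-\emph{preserving}, not order-reversing. The scheme decreases because you start from the maximal element $m^{[-1]}\equiv 1\ge m^{[0]}$ and then apply an increasing map: $m^{[k]}=T(m^{[k-1]})\le T(m^{[k-2]})=m^{[k-1]}$ by induction. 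If the map were order-reversing as you claim, the sequence would oscillate and no monotone limit would follow.

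A minor point: the monotonicity of $t\mapsto c_n(t,x)$ does not come merely from ``time-to-maturity shrinks'', because the drift $a(\cdot,m^{[n-1]}(t+\cdot))$ is time-inhomogeneous. The paper needs the comparison Lemma \ref{lem:comparisonTime} (which uses that $m^{[n-1]}$ is non-decreasing in $t$ together with Assumption \ref{ass:SDE}(ii)) to conclude $\widehat X^{t_1,x}_{t_1+s}\le \widehat X^{t_2,x}_{t_2+s}$ for $t_1<t_2$, and only then does the horizon inclusion $\widehat\T_{t_2}\subset\widehat\T_{t_1}$ finish the argument.
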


Notice that the iterative scheme that we devise for the proof of the theorem suggests a procedure to actually construct the optimal boundary numerically.

The second key result in this paper shows that the optimal control $\xi^*$ solution of the MFG can be used (under mild additional assumptions) to construct an $\eps$-Nash equilibrium in the $N$-player game. The statement and proof of this fact are given in Section \ref{sec:approximation} below, whereas in the next section we prove Theorem \ref{teo:existenceSolMFG}. 

\section{Construction of the solutions to the MFG}\label{sec:existence}
In this section, we provide the complete proof of Theorem \ref{teo:existenceSolMFG} together with an intuitive description of the iterative scheme that underpins it. Some of the auxiliary results used along the way can be found in the Appendix as indicated.

\subsection{Description of the iterative scheme}
The idea is to start an iterative scheme based on singular control problems that are analogue to the one in the MFG but without consistency condition in the mean-field interaction. 

We initialise the scheme by setting $m^{[-1]}(t)\equiv 1$, for $t \in [0,T]$. At the $n$-th step, $n \ge 0$, assume a non-decreasing, right-continuous function $m^{[n-1]}:[0,T]\to [0,1]$ is given and fixed and consider the dynamics
\begin{equation}\label{eq:Xn}
\begin{aligned}
X_{t+s}^{[n];t,x}&= x+\int_0^{s} a(X_{t+u}^{[n];t,x},m^{[n-1]}(t+u))\ud u+\int_0^{s}\sigma(X_{t+u}^{[n];t,x}) \ud W_{t+u},\\
Y_{t+s}^{[n];t,x,y}&=y+ \xi_{t+s},
\end{aligned}
\end{equation}
for $(x,y)\in\Sigma$, $s\in[0,T-t]$, $t\in[0,T]$ and where $\xi\in \Xi_{t,x}(y)$ (cf.\ \eqref{eq:Xitxy}).  We define the singular control problem $\textbf{SC}^{[n]}_{t,x,y}$ as: 
\begin{align}
v_n(t,x,y):= & \sup_{\xi\in\Xi_{t,x}(y)}J_n(t,x,y;\xi)\qquad\text{with}\label{SCn-1}\\
J_n(t,x,y;\xi):= &\,\E_{t,x,y}\left[\int_0^{T-t}e^{-rs}f(X^{[n]}_{t+s},y + \xi_{t+s})\ud s-\int_{[0,T-t]}e^{-rs}c_0\ud\xi_{t+s}\right].\label{SCn-2}
\end{align}
Now, in order to define the $(n+1)$-th step of the algorithm, let us assume that we can find an optimal control $\xi^{[n]*}$ for problem $\textbf{SC}^{[n]}_{0,x,y}$ for each $(x,y)\in\Sigma$. Set $Y^{[n]*}:=y+\xi^{[n]*}$ and assume that $(x,y)\mapsto\E_{x,y}\big[ Y^{[n]*}_t\big]$ is measurable for all $t\in[0,T]$. Then, we define
\[
m^{[n]}(t):=\int_\Sigma\E_{x,y}\left[Y^{[n]*}_t\right]\nu(\ud x,\ud y).
\] 
The map $t\mapsto m^{[n]}(t)$ is non-decreasing and right-continuous (by dominated convergence) with values in $[0,1]$, so we can use it to define $(X^{[n+1]}, Y^{[n+1]})$ and $v_{n+1}$ by iterating the above construction. 

It is well-known in singular control theory that since $y\mapsto f(x,y)$ is concave and the dynamics of $X^{[n]}$ is independent of the control $\xi$, then the $y$-derivative of $v_n(t,x,y)$ corresponds to the value function of an optimal stopping problem. While we will re-derive this fact in Proposition \ref{th:theoremConnection} for completeness, here we state the optimal stopping problem that should be associated to $\textbf{SC}^{[n]}_{t,x,y}$ above. For $(t,x,y)\in[0,T]\times\Sigma'$
we define the stopping problem $\textbf{OS}^{[n]}_{t,x,y}$ as
\begin{align}
u_n(t,x,y):= & \inf_{\tau\in\T_t}U_n(t,x,y;\tau)\qquad\text{with}\label{eq:un}\\
U_n(t,x,y;\tau):= &\,\E_{t,x}\left[\int_0^{\tau}e^{-rs}\partial_yf(X_{t+s}^{[n]},y)\ud s+c_0e^{-r\tau}\right],\quad\text{for $\tau\in\T_t$}\label{eq:Un}
\end{align}
and where $\T_t$ is the set of stopping times for the filtration generated by the Brownian motion $(W_{t+s}-W_t)_{s\ge 0}$ appearing in \eqref{eq:Xn} with values in $[0,T-t]$ (in particular $\{t+\tau\le s\}\in\cF_{t+s}$, since $(\cF_{t+s})_{s\in[0,T-t]}$ is an even larger filtration).  Due to Assumption \ref{ass:f} it may be $\partial_yf(x,0)=+\infty$ (e.g., for Cobb-Douglas profit). In that case, by convention $\tau=0$ is an optimal stopping time and $u_n(t,x,0)=c_0$. With this convention $u_n$ is well-defined on $[0,T]\times\Sigma$. 

The stopping problem above is standard (see, e.g., \cite{peskirShyriyaev}, Chapter I, Section 2, Theorem 2.2): thanks to Assumption \ref{ass:int} and continuity of the gain process
\[
u\mapsto \int_0^{u}e^{-rs}\partial_yf(X_{t+s}^{[n]},y)\ud s+c_0e^{-ru}
\]
we know that the smallest optimal stopping time is
\begin{align}\label{tau*n}
\tau^{[n]}_*(t,x,y)=\inf\{s\in[0,T-t]: u_n(t+s,X^{[n];t,x}_{t+s},y)=c_0\}.
\end{align}
Letting
\begin{align}
Z^{[n]}_s:=e^{-rs}u_n(t+s,X^{[n]}_{t+s},y)+\int_0^se^{-ru}\partial_y f(X^{[n]}_{t+u},y)\ud u
\end{align}
we have that, under $\P_{t,x,y}$, 
\begin{align}\label{eq:mart}
\text{$(Z^{[n]}_s)_{s\in[0,T-t]}$ is a submartingale and $\big(Z^{[n]}_{s\wedge\tau^{[n]}_*}\big)_{s\in[0,T-t]}$ is a martingale}.
\end{align}
Accordingly, we define the continuation region, $\mathcal{C}^{[n]}$, and the stopping region, $\mathcal{S}^{[n]}$, of the optimal stopping problem as
\begin{align*} 
\mathcal{C}^{[n]}:= &\lbrace (t,x,y) \in [0,T] \times \Sigma\,:\,u_n(t,x,y)<c_0 \rbrace,\\
\mathcal{S}^{[n]}:= &\lbrace (t,x,y) \in [0,T] \times \Sigma\,:\,u_n(t,x,y)=c_0 \rbrace.
\end{align*}
Finally, we introduce an auxiliary set which will be used in our analysis:
\begin{equation}
\mathcal{H}:=\lbrace(x,y)\in\R\times[0,1]:\,\partial_yf(x,y)-rc_0<0\rbrace. \label{def-H}
\end{equation}
Notice that condition \eqref{ass:partial-y-f} in Assumption \ref{ass:f} implies that $\mathcal H$ is not empty. This will be needed to prove that the continuation and stopping regions are not empty either.

Since $W_{t+u}-W_{t}= W_u$ in law, it is possible (and convenient for some steps in the analysis of the stopping problems) to use always the same Brownian motion in the dynamics of the process $X^{[n];t,x}$, irrespectively of $t\in[0,T]$. More formally, on a probability space $(\widehat \Omega,\widehat\cF,\widehat \P)$ let us consider a Brownian motion $(\widehat W_t)_{t\ge 0}$ with its natural filtration $(\widehat\cF_t)_{t\ge 0}$. For each $n\ge 0$ we can define a family of processes $\widehat X^{[n];t,x}$ parametrized by couples $(t,x)\in[0,T]\times\R$ so that for each $(t,x)$ we have
\begin{equation}\label{eq:hatXn}
\begin{aligned}
\widehat X_{t+s}^{[n];t,x}&= x+\int_0^{s} a(\widehat X_{t+u}^{[n];t,x},m^{[n-1]}(t+u))\ud u+\int_0^{s}\sigma(\widehat X^{[n];t,x}_{t+u}) \ud \widehat{W}_{u}.
\end{aligned}
\end{equation}
Clearly $\widehat X^{[n];t,x}_{t+s}$ is $\widehat \cF_{s}$-measurable, irrespectively of $(t,x)$, and
\[
\text{Law}_{\bar{\P}}\big((X^{[n];t,x}_{t+s})_{s\in[0,T-t]}\big)=\text{Law}_{\widehat\P}\big((\widehat{X}^{[n];t,x}_{t+s})_{s\in[0,T-t]}\big),
\]
since our SDEs for $X^{[n]}$ and $\widehat X^{[n]}$ admit a unique strong solution.

Let $\widehat{\T}_t$ be the class of $(\widehat \cF_s)_{s\ge 0}$-stopping times bounded by $T-t$. As explained above, one only needs to look for optimal stopping times in the form of \eqref{tau*n}. Then the equality in law of $X^{[n];t,x}$ and $\widehat X^{[n];t,x}$ guarantees that 
\begin{equation}
u_n(t,x,y)= \inf_{\tau\in\widehat{\T}_t}\widehat{U}_n(t,x,y;\tau),\label{eq:hatun}
\end{equation}
where
\[
\widehat{U}_n(t,x,y;\tau):= \widehat{\E}_{t,x}\left[\int_0^{\tau}e^{-rs}\partial_yf(\widehat{X}_{t+s}^{[n]},y)\ud s+c_0e^{-r\tau}\right].
\]
The representation \eqref{eq:hatun} for the value function of our stopping problem is convenient because $\widehat{\T}_{t_2}\subset \widehat{\T}_{t_1}$ for $t_1<t_2$. This fact and Lemma \ref{lem:comparisonTime} below will be used to prove monotonicity of the mapping $t\mapsto u_n(t,x,y)$ in Proposition \ref{prop:OSvalueFunc}--(iii).

The rest of our algorithm of proof for Theorem \ref{teo:existenceSolMFG} goes as follows:
\begin{itemize}
\item[{\em Step 1}.] Using a probabilistic approach we study in detail continuity and monotonicity of the value function $u_n$, for a generic $n\ge 0$. 
\item[{\em Step 2}.] Thanks to the results in \textit{Step 1} we construct a (unique) solution to $\textbf{OS}^{[n]}_{t,x,y}$ by determining the geometry of the stopping region $\cS^{[n]}$. In particular we need to prove regularity properties of the optimal stopping boundary $\partial\cC^{[n]}$ that guarantee that we can construct a process $Y^{[n]*}$ so that the state $(t,X^{[n]},Y^{[n]*})$ is bound to evolve in the closure $\overline \cC^{[n]}$ of the continuation set, by Skorokhod reflection.
\item[{\em Step 3}.] We confirm that $Y^{[n]*}$ is the unique optimal control in problem $\textbf{SC}^{[n]}_{t,x,y}$ and that $v_n$ can be constructed by integrating $u_n$ with respect to $y$ (as already shown in the existing literature).
\item[{\em Step 4}.] We prove that the sequence $(u_n)_{n\ge 0}$ is decreasing and use this fact to prove that the iterative scheme converges to the MFG, in the sense that $(X^{[n]},Y^{[n]*},m^{[n]})$ converges to $(X^*,Y^*,m^*)$ from Definition \ref{def:solMFG} and that $(Y^*,m^*)$ are expressed as in Theorem \ref{teo:existenceSolMFG}.
\end{itemize}

\subsection{Solution of the $n$-th stopping problem}\label{subsec:OCOS}
Here we construct the solution to problem $\textbf{OS}^{[n]}_{t,x,y}$ for a generic $n\ge 0$. In particular, $t \mapsto m^{[n-1]}(t)$ is a given right-continuous, non-decreasing function bounded between zero and one. 
First we state a simple but useful comparison result.
\begin{lemma}[Comparison]\label{lem:comparisonTime} Let Assumption \ref{ass:SDE} hold and recall that $m^{[n-1]}:\,[0,T]\rightarrow[0,1]$ is non-decreasing. Then, for any $t\leq t'$ we have
\begin{equation}\label{eq:comp} 
\widehat\P\left(\widehat{X}^{[n];t,x}_{t+s}\leq\widehat{X}^{[n];t',x}_{t'+s},\:\: \forall s\in[0,T-t']\right)=1,
\end{equation}
under the dynamics in \eqref{eq:hatXn}.
\end{lemma}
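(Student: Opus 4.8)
The plan is to reduce the comparison of two diffusions started at different times to a one-time-parameter comparison theorem for SDEs. First I would fix $t \le t'$ and set $h := t' - t \ge 0$. Introduce $\tilde X_s := \widehat X^{[n];t,x}_{t+s+h} = \widehat X^{[n];t,x}_{t'+s}$ for $s \in [0,T-t']$. Using \eqref{eq:hatXn}, $\tilde X$ satisfies, for $s \in [0,T-t']$,
\begin{equation*}
\tilde X_s = \tilde X_0 + \int_0^s a\bigl(\tilde X_u, m^{[n-1]}(t+u+h)\bigr)\,\ud u + \int_0^s \sigma(\tilde X_u)\,\ud(\widehat W_{u+h}-\widehat W_h),
\end{equation*}
where $\tilde X_0 = \widehat X^{[n];t,x}_{t'} \ge x$ because the initial datum at time $t$ is $x$ and the process is continuous; more precisely, $\tilde X_0$ is a genuine random variable that we cannot assume is $\ge x$, so this naive time-shift does not quite work. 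The cleaner route is to keep the \emph{same} driving Brownian motion: both $\widehat X^{[n];t,x}_{t+s}$ and $\widehat X^{[n];t',x}_{t'+s}$ are, as functions of $s$, strong solutions of SDEs driven by the \emph{same} increments $\ud \widehat W_s$ (this is precisely why \eqref{eq:hatXn} was set up with $\ud\widehat W_u$ independent of $t$), with the same diffusion coefficient $\sigma$, the same initial condition $x$ at $s=0$, but drift coefficients $b_1(s,\cdot) := a(\cdot, m^{[n-1]}(t+s))$ and $b_2(s,\cdot) := a(\cdot, m^{[n-1]}(t'+s))$ respectively.

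Next I would invoke the comparison theorem for one-dimensional SDEs (e.g.\ Karatzas--Shreve, Proposition 5.2.18, or Ikeda--Watanabe): if two SDEs share the same volatility $\sigma$ (Lipschitz by Assumption \ref{ass:SDE}(i)), have Lipschitz drifts, the same initial condition, and the drift of the second dominates that of the first pathwise, i.e.\ $b_1(s,z) \le b_2(s,z)$ for all $s,z$, then the solutions are pathwise ordered for all times, almost surely. The drift domination here is exactly where Assumption \ref{ass:SDE}(ii) and the monotonicity of $m^{[n-1]}$ enter: for every $s \in [0,T-t']$ we have $t+s \le t'+s$, hence $m^{[n-1]}(t+s) \le m^{[n-1]}(t'+s)$ since $m^{[n-1]}$ is non-decreasing, and therefore $a(z, m^{[n-1]}(t+s)) \le a(z, m^{[n-1]}(t'+s))$ for all $z$ since $m \mapsto a(z,m)$ is non-decreasing. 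This gives $b_1(s,z) \le b_2(s,z)$ as required, and the comparison theorem yields $\widehat X^{[n];t,x}_{t+s} \le \widehat X^{[n];t',x}_{t'+s}$ for all $s \in [0,T-t']$, $\widehat\P$-a.s., which is \eqref{eq:comp}.

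The only mild subtlety — and the step I expect to require the most care — is that the standard comparison theorems are usually stated for time-homogeneous or jointly continuous drift coefficients, whereas here $s \mapsto m^{[n-1]}(t+s)$ is only right-continuous and non-decreasing, hence merely Borel measurable and bounded in $s$. One way around this is to note that the comparison theorem still holds for drifts that are measurable in time and Lipschitz in space with a time-independent constant (a standard extension via Gronwall; see e.g.\ the localization argument in Revuz--Yor or a direct estimate on $\E[(\widehat X^{[n];t,x}_{t+s}-\widehat X^{[n];t',x}_{t'+s})^-]$ using Itô--Tanaka and the fact that the local time at $0$ of the difference vanishes because $\sigma$ is Lipschitz). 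Alternatively, one approximates $m^{[n-1]}$ uniformly by continuous non-decreasing functions $m^{[n-1]}_k$, applies the classical comparison theorem to the corresponding solutions $\widehat X^{[n],k}$, and passes to the limit using the $L^2$-stability of strong solutions under uniform perturbation of the drift (Assumption \ref{ass:SDE}(i)) together with the fact that the pathwise ordering is preserved under a.s.\ (along a subsequence, uniform on compacts) convergence. Either way the conclusion \eqref{eq:comp} follows.
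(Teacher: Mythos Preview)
Your proposal is correct and follows essentially the same route as the paper: both reduce to a one-dimensional SDE comparison (Karatzas--Shreve, Proposition 5.2.18) by observing that $\widehat X^{[n];t,x}_{t+\cdot}$ and $\widehat X^{[n];t',x}_{t'+\cdot}$ are driven by the same Brownian increments $\ud\widehat W_s$, share the same $\sigma$ and initial datum $x$, and have ordered drifts $a(\cdot,m^{[n-1]}(t+s))\le a(\cdot,m^{[n-1]}(t'+s))$ by Assumption~\ref{ass:SDE}(ii) and the monotonicity of $m^{[n-1]}$. The only difference is in handling the time-discontinuity of the drift: where you sketch an Itô--Tanaka argument or a continuous approximation, the paper simply remarks that the proof of Proposition 5.2.18 in Karatzas--Shreve does not actually use time-continuity of the drift, so the result applies directly.
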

\begin{proof}
Since the SDEs for $\widehat{X}^{[n];t,x}$ and $\widehat{X}^{[n];t',x}$ are driven by the same Brownian motion, it suffices to compare their drift coefficients and then apply the comparison result in \cite[Proposition 5.2.18]{karatzasShreve} --- although it is mentioned in its statement, the proof of Proposition 5.2.18 does not use time-continuity of the drift. 

Set $A(x,s):= a(x,m(t+s))$ and $A'(x,s):= a(x,m(t'+s))$. Since both $t\mapsto m(t)$ and $m\mapsto a(x,m)$ are non-decreasing (Assumption \ref{ass:SDE}-(ii)), we have  $A(x,s)\leq A'(x,s)$ for all $(x,s)\in\R\times[0,T-t']$. Therefore, applying \cite[Proposition 5.2.18]{karatzasShreve} we obtain \eqref{eq:comp}. 
\end{proof}
Next we prove continuity and monotonicity of the value function. Recall our convention that $u_n(t,x,0)=c_0$ if $\partial_y f(x,0)=\infty$, and the set $\Sigma'=\R\times(0,1]$.
\begin{proposition}[Value function of $\textbf{OS}^{[n]}$ ]\label{prop:OSvalueFunc} 
Let Assumptions \ref{ass:SDE}--\ref{ass:int} hold. Then the value function of the optimal stopping problem {\em $\textbf{OS}^{[n]}_{t,x,y}$} has the following properties:
\begin{itemize}
\item[(i)] $0 \le u_n(t,x,y)\leq c_0$;
\item[(ii)] the map $x\mapsto u_n(t,x,y)$ is non-decreasing for each fixed $(t,y)\in[0,T]\times[0,1]$ and $y\mapsto u_n(t,x,y)$ is non-increasing for each $(t,x)\in[0,T]\times\R$;
\item[(iii)] the map $t\mapsto u_n(t,x,y)$ is non-decreasing for each fixed $(x,y)\in\Sigma$;
\item[(iv)] the value function is continuous, i.e., $u_n\in C([0,T]\times\Sigma')$.
\end{itemize}
\end{proposition}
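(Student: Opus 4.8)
The plan is to prove the four items essentially in the order stated, working throughout with the representation \eqref{eq:hatun} of $u_n$ in terms of a single Brownian motion $\widehat W$, and exploiting two structural facts: the monotonicity of $\partial_y f$ in each of its arguments (Assumption \ref{ass:f}) and the pathwise comparison of $\widehat X^{[n];t,x}$ both in the initial state $x$ (the comparison principle for one-dimensional SDEs, cf.\ \cite[Prop.\ 5.2.18]{karatzasShreve}) and in the initial time $t$ (Lemma \ref{lem:comparisonTime}). Item (i) is immediate: since $y\mapsto f(x,y)$ is non-decreasing we have $\partial_y f\ge 0$, so $\widehat U_n(t,x,y;\tau)\ge 0$ for every admissible $\tau$ and hence $u_n\ge 0$; choosing $\tau\equiv 0$ gives $\widehat U_n(t,x,y;0)=c_0$, so $u_n\le c_0$ (the borderline case $\partial_y f(x,0)=\infty$ being covered by the convention $u_n(t,x,0)=c_0$).

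Items (ii) and (iii) follow a common template: fix $\tau$, compare the integrands pathwise, then take the infimum over $\tau$. For $x$-monotonicity, $x\le x'$ implies $\widehat X^{[n];t,x}_{t+s}\le\widehat X^{[n];t,x'}_{t+s}$ a.s., and since $\xi\mapsto\partial_y f(\xi,y)$ is non-decreasing this gives $\widehat U_n(t,x,y;\tau)\le\widehat U_n(t,x',y;\tau)$ for every $\tau$, hence $x\mapsto u_n(t,x,y)$ non-decreasing. For $y$-monotonicity, strict concavity of $f(x,\cdot)$ makes $y\mapsto\partial_y f(x,y)$ strictly decreasing, so $y\le y'$ yields $\widehat U_n(t,x,y;\tau)\ge\widehat U_n(t,x,y';\tau)$ and $y\mapsto u_n(t,x,y)$ non-increasing (with $u_n(t,x,0)=c_0$ the largest value, consistent with the convention). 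For $t$-monotonicity: for $t\le t'$ one has $\widehat\T_{t'}\subseteq\widehat\T_t$ and, by Lemma \ref{lem:comparisonTime}, $\widehat X^{[n];t,x}_{t+s}\le\widehat X^{[n];t',x}_{t'+s}$ for $s\in[0,T-t']$; therefore $\widehat U_n(t',x,y;\tau)\ge\widehat U_n(t,x,y;\tau)\ge u_n(t,x,y)$ for every $\tau\in\widehat\T_{t'}$, and taking the infimum over such $\tau$ gives $u_n(t',x,y)\ge u_n(t,x,y)$.

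For item (iv) I would prove joint continuity on $[0,T]\times\Sigma'$ by estimating the variation in $(x,y)$ (at fixed $t$) and in $t$ (at fixed $(x,y)$) and combining the two. For the first, using $|\inf A-\inf B|\le\sup|A-B|$ one gets, for $(x_k,y_k)\to(x,y)\in\Sigma'$,
\[
|u_n(t,x_k,y_k)-u_n(t,x,y)|\le\widehat\E\Big[\int_0^{T-t}e^{-rs}\big|\partial_y f(\widehat X^{[n];t,x_k}_{t+s},y_k)-\partial_y f(\widehat X^{[n];t,x}_{t+s},y)\big|\,\ud s\Big];
\]
the integrand tends to $0$ a.s.\ by continuity of $\partial_y f$ on $\Sigma'$ and continuity of the random field $x\mapsto\widehat X^{[n];t,x}_{t+s}$, and for $k$ large (so that $y_k\ge y/2$ and $x_k\le\bar x:=\sup_j x_j<\infty$) it is dominated, via the monotonicity of $\partial_y f$ and the comparison principle, by a constant times $\partial_y f(\widehat X^{[n];t,\bar x}_{t+s},y/2)$, which lies in $L^1(\ud s\times\ud\widehat\P)$ by Assumption \ref{ass:int}; dominated convergence closes this case. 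For continuity in $t$ I would use that $t\mapsto u_n(t,x,y)$ is monotone by (iii), so it remains to exclude jumps: given $t_k\to t$, one takes near-optimal stopping times for the problem with the larger horizon, truncates them to $\tau_k:=\tau\wedge(T-t_k)$ so that $\tau_k\to\tau$ a.s., and passes to the limit in $\widehat U_n(t_k,x,y;\tau_k)$ using the joint continuity of $(t,s)\mapsto\widehat X^{[n];t,x}_{t+s}$ and a dominated convergence argument, obtaining $\limsup_k u_n(t_k,x,y)\le u_n(t,x,y)$ and, symmetrically, the reverse inequality; since the $(x,y)$-estimate is locally uniform in $t$, the two combine to joint continuity.

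The main obstacle is the domination needed in the limiting arguments of item (iv), and especially in the $t$-direction: one must produce an integrable majorant for $\partial_y f(\widehat X^{[n];t_k,x}_{t_k+s},y)$ that is uniform in $k$, and because $\partial_y f$ is permitted to grow arbitrarily fast in $x$ this cannot be obtained from moment bounds on $\widehat X$ alone — it has to be extracted from Assumption \ref{ass:int} together with the comparison estimate of Lemma \ref{lem:comparisonTime}, if necessary recast as a uniform-integrability (Vitali) argument exploiting the exponent $p>1$. Once the interchange of limit and expectation is justified, the remaining verifications are routine consequences of the continuity of $\partial_y f$ and of the diffusion as a random field.
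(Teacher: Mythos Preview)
Your proof is correct. Items (i)--(iii) match the paper's argument almost exactly; the only cosmetic difference is that you compare $\widehat U_n(\cdot;\tau)$ for every $\tau$ and then pass to the infimum, whereas the paper picks the optimal stopping time for one of the two points and bounds the difference directly---both are standard and equivalent here.

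Item (iv) is where you diverge. The paper does not combine an $(x,y)$-estimate with a separate $t$-estimate to obtain joint continuity. Instead it invokes a general fact (citing \cite{kruse}): a real-valued function that is monotone in each variable and separately continuous is automatically jointly continuous. This reduces the task to proving continuity in $x$, in $y$, and in $t$ one at a time, which the paper does by picking the optimal stopping time for one of the two values and using dominated convergence (Assumption \ref{ass:int}) together with continuity of $\partial_y f$ and of the flow $(t,x)\mapsto X^{[n];t,x}$. In particular the paper never needs the ``$(x,y)$-estimate is locally uniform in $t$'' claim that you flag, nor does it need a uniform-in-$k$ majorant for $\partial_y f(\widehat X^{[n];t_k,x}_{t_k+s},y)$---the difference is written with a sign (thanks to (iii)), and the upper bound involves only a single fixed-$t$ process, for which Assumption \ref{ass:int} applies directly. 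Your route works too, and the Vitali fallback you mention would close the domination issue, but the monotonicity-plus-separate-continuity shortcut buys the paper a cleaner argument with fewer uniformity checks.
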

\begin{proof}
{\em (i)}. The upper bound is due to $u_n(t,x,y)\le U_n(t,x,y;0)=c_0$. For the lower bound it is enough to recall that $\partial_y f\ge 0$ by Assumption \ref{ass:f}-(i).
\vspace{+4pt}
 
{\em (ii)}. Fix $(t,y)\in[0,T]\times[0,1]$. Let $x_2>x_1$ and set $\tau_2:= \tau^{[n]}_*(t,x_2,y)$  as in \eqref{tau*n}, which is optimal in $u_n(t,x_2,y)$. Then
\begin{equation}
u_n(t,x_2,y)-u_n(t,x_1,y)\geq \E\left[\int_0^{\tau_2}e^{-rs}\left(\partial_yf(X^{[n];t,x_2}_{t+s},y)-\partial_yf(X^{[n];t,x_1}_{t+s},y)\right)\ud s\right]\geq 0
\nonumber
\end{equation}
because $X^{[n];t,x_2}_{t+s}\geq X^{[n];t,x_1}_{t+s}$ by uniqueness of the solution to \eqref{eq:Xn} and $x\mapsto\partial_y f(x,y)$ is increasing by Assumption \ref{ass:f}-(i). By a similar argument we also obtain monotonicity in $y$, since $y\mapsto\partial_y f(x,y)$ is decreasing by Assumption \ref{ass:f}-(ii).
\vspace{+4pt}

{\em (iii)}. For this part of the proof we use Lemma \ref{lem:comparisonTime} and the representation \eqref{eq:hatun} of the value function. Fix $(x,y)\in\Sigma$ and take $t_2>t_1$ in $[0,T]$. Let 
\[
\tau_2= \widehat{\tau}^{[n]}_*(t_2,x,y)=\inf\{s\in[0,T-t_2]:u_n(t_2+s,\widehat{X}^{[n];t_2,x}_{t_2+s},y)=c_0\},
\] 
which is optimal in $u_n(t_2,x,y)$, and notice that the stopping time is also admissible for $u_n(t_1,x,y)$ because $\widehat{\T}_{t_2}\subset\widehat{\T}_{t_1}$. Then
\begin{equation}
u_n(t_2,x,y)-u_n(t_1,x,y)\geq \widehat\E\left[\int_0^{\tau_2}e^{-rs}\left(\partial_yf(\widehat{X}^{[n];t_2,x}_{t_2+s},y)-\partial_yf(\widehat{X}^{[n];t_1,x}_{t_1+s},y)\right)\ud s\right]\geq 0,
\nonumber
\end{equation}
where the final inequality uses that $\widehat{X}^{[n];t_2,x}_{t_2+s}\geq \widehat{X}^{[n];t_1,x}_{t_1+s}$ for $s\in[0,T-t_2]$, $\P$-a.s.~by Lemma \ref{lem:comparisonTime} and $x\mapsto \partial_yf(x,y)$ is non-decreasing by Assumption \ref{ass:f}-(i).
\vspace{+4pt}

{\em (iv)}. Joint continuity of the value function can be deduced by separate continuity in each variable and monotonicity (see, e.g., \cite{kruse}). Thanks to (ii) and (iii), it suffices to show that $u_n$ is continuous separately in each variable.

Fix $(t,x,y)\in[0,T]\times\Sigma'$. Let $x_k\to x$ as $k\to \infty$ and let $\tau_*=\tau^{[n]}_*(t,x,y)$ be optimal for $u_n(t,x,y)$. First we show right-continuity of $u_n(t,\,\cdot\,,y)$ and assume that $x_k\downarrow x$. For each $k$, using monotonicity proven in (ii) we have
\begin{align}
0\leq &u_n(t,x_k,y)-u_n(t,x,y)\label{RHS}\\
\leq &\E\left[\int_0^{\tau_*}e^{-rs}\left(\partial_yf(X^{[n];t,x_k}_{t+s},y)-\partial_yf(X^{[n];t,x}_{t+s},y)\right)\ud s\right]\nonumber\\
\leq &\E\left[\int_0^{T-t}e^{-rs}\left|\partial_yf(X^{[n];t,x_k}_{t+s},y)-\partial_yf(X^{[n];t,x}_{t+s},y)\right|\ud s\right].\nonumber
\end{align}
Taking limits as $k\to\infty$, Assumption \ref{ass:int} allows us to use dominated convergence so that we only need
\[ 
\lim_{k\to\infty}\left|\partial_yf(X^{[n];t,x_k}_{t+s},y)-\partial_yf(X^{[n];t,x}_{t+s},y)\right|= 0,\quad\P-a.s. 
\]
The latter holds by continuity of $\partial_y f$ and continuity of the flow $x\mapsto X^{[n];t,x}$ (which is guaranteed by Assumption \ref{ass:SDE}).

We can prove left-continuity by analogous arguments. Letting $x_k\uparrow x$ and, for each $k$, selecting the stopping time $\tau_k=\tau^{[n]}_*(t,x_k,y)$ which is optimal for $u_n(t,x_k,y)$ we get
\begin{align*}
0\leq &u_n(t,x,y)-u_n(t,x_k,y)\nonumber\\
\leq &\E\left[\int_0^{\tau_k}e^{-rs}\left(\partial_yf(X^{[n];t,x}_{t+s},y)-\partial_yf(X^{[n];t,x_k}_{t+s},y)\right)\ud s\right].
\end{align*}
Then we can conclude as in \eqref{RHS}. Completely analogous arguments allow to prove continuity of the value function with respect to $y$ and we omit them here for brevity.

Continuity in time only requires a small adjustment to the argument above. Let $t_k\to t$ as $k\to\infty$, with $(t,x,y)\in[0,T]\times\Sigma'$ fixed. First let us consider $t_k\downarrow t$ and set $\tau_*=\tau^{[n]}_*(t,x,y)$, which is optimal for $u_n(t,x,y)$. Then $\tau_*\wedge(T-t_k)$ is admissible for $u_n(t_k,x,y)$ and, by the monotonicity proven in (iii), we have
\begin{align}
0 \leq& u(t_k,x,y)-u(t,x,y)\nonumber\\
 \leq &\E\left[\int_0^{\tau_*\wedge(T-t_k)}e^{-rs}\left(\partial_yf(X^{[n];t_k,x}_{t_k+s},y)-\partial_yf(X^{[n];t,x}_{t+s},y)\right)\ud s\right]\nonumber\\
& \quad +\E\left[\int_{\tau_*\wedge(T-t_k)}^{\tau_*}e^{-rs}\partial_yf(X^{[n];t,x}_{t+s},y)\ud s\right]\nonumber\\
\leq& \E\left[\int_0^{T-t_k}e^{-rs}\left|\partial_yf(X^{[n];t_k,x}_{t_k+s},y)-\partial_yf(X^{[n];t,x}_{t+s},y)\right|\ud s\right] \nonumber\\
& \quad +\E\left[\int_{T-t_k}^{T-t}e^{-rs}\left|\partial_yf(X^{[n];t,x}_{t+s},y)\right|\ud s\right].
\end{align}
Now we can let $k\to\infty$ and use dominated convergence (thanks to Assumption \ref{ass:int}), continuity of the stochastic flow $t\mapsto X^{t,x}_{t+\cdot}$ and continuity of $\partial_yf$  (Assumption \ref{ass:f}) to obtain right-continuity of $u_n(\,\cdot\,,x,y)$. An analogous argument allows to prove left-continuity as well.
\end{proof}

Continuity of $u_n$ on $[0,T]\times\Sigma$ is immediate if $\partial_y f$ is continuous on $\Sigma$. If $\partial_y f$ is not well-defined for $y=0$, one can still prove continuity of $u_n$ on $[0,T]\times\Sigma$ in some cases, upon further specifying the exact form of $f$. We leave further details aside as they are not needed in the rest of our analysis.

Thanks to the properties of the value function we can easily determine the shape of the continuation region $\cC^{[n]}$, whose boundary $\partial\cC^{[n]}$ turns out to be a surface with `nice' monotonicity properties, that we will subsequently use to obtain a solution of the singular control problem $\textbf{SC}^{[n]}$. Part of the proof is based on the following equivalent representation of the value function:
\begin{equation}
u_n(t,x,y)=c_0+ \inf_{\tau\in\T_t}\E_{t,x}\left[\int_0^{\tau}e^{-rs}\left(\partial_yf(X^{[n]}_{t+s},y)-rc_0\right)\ud s\right].
\label{eq:valueFuncRepresetation2}
\end{equation}
\begin{proposition}[Optimal boundary]\label{prop:OSboundary} 
Under Assumptions \ref{ass:SDE}--\ref{ass:int}, the continuation and stopping regions, $\cC^{[n]}$ and $\mathcal{S}^{[n]}$, are non-empty. The boundary of $\cC^{[n]}$ can be expressed as a function $c_n:[0,T]\times \R\rightarrow[0,1]$, such that
\begin{align*}
&\cC^{[n]}=\{(t,x,y)\!\in\![0,T]\!\times\!\Sigma: y>c_n(t,x) \},
&\cS^{[n]}=\{(t,x,y)\!\in\![0,T]\!\times\!\Sigma: y\le c_n(t,x) \}.
\end{align*}
The map $(t,x)\mapsto c_n(t,x)$ is upper semi-continuous with $t\mapsto c_n(t,x)$ and $x\mapsto c_n(t,x)$ non-decreasing (hence $c_n(\,\cdot\,,x)$ and $c_n(t,\,\cdot\,)$ are right-continuous).
\end{proposition}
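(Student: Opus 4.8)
The plan is to exploit the representation \eqref{eq:valueFuncRepresetation2} together with the monotonicity of the value function $u_n$ established in Proposition \ref{prop:OSvalueFunc}. First I would observe that, since $y\mapsto u_n(t,x,y)$ is non-increasing (Proposition \ref{prop:OSvalueFunc}-(ii)) and $u_n\le c_0$ everywhere, the section $\{y:(t,x,y)\in\cS^{[n]}\}$ is a (possibly empty, possibly full) sub-interval of $[0,1]$ of the form $[0,c_n(t,x)]$, where one sets $c_n(t,x):=\sup\{y\in[0,1]:u_n(t,x,y)=c_0\}$ with the convention $\sup\emptyset=0$. The fact that the stopping region is \emph{closed} from the right in $y$, i.e.\ that $u_n(t,x,c_n(t,x))=c_0$ whenever $c_n(t,x)>0$, follows from continuity of $y\mapsto u_n(t,x,y)$ on $\Sigma'$ (Proposition \ref{prop:OSvalueFunc}-(iv)), giving the claimed description of $\cC^{[n]}$ and $\cS^{[n]}$.

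Second, I would prove $\cC^{[n]}$ and $\cS^{[n]}$ are non-empty using the auxiliary set $\cH$ in \eqref{def-H}. For non-emptiness of $\cS^{[n]}$: by \eqref{ass:partial-y-f} there exist $x,y$ with $\partial_yf(x,y)<rc_0$, and since $x\mapsto\partial_yf(x,y)$ is non-decreasing (Assumption \ref{ass:f}-(i)) the integrand $\partial_yf(X^{[n]}_{t+s},y)-rc_0$ in \eqref{eq:valueFuncRepresetation2} stays strictly negative on a non-trivial time interval with positive probability when $x$ is very negative, at least for $t$ close to $T$ — but more robustly, for $x$ sufficiently negative the integrand is negative for all $s$ up to a deterministic positive time (by Lipschitz bounds on the drift and standard moment estimates on $X^{[n]}$), forcing $\inf_\tau(\cdots)<0$, hence $u_n<c_0$; thus such $(t,x,y)\in\cC^{[n]}$. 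Wait — that shows $\cC^{[n]}\neq\emptyset$. For $\cS^{[n]}\neq\emptyset$ I instead take $t=T$ (where $\tau=0$ is the only admissible stopping time, so $u_n(T,x,y)=c_0$ trivially) or, for $t<T$, take $x$ large: then $\partial_yf(X^{[n]}_{t+s},y)\ge\partial_yf(x,y)>rc_0$ with high probability on a short interval, making the infimum in \eqref{eq:valueFuncRepresetation2} attained at $\tau=0$ by a submartingale/optional-sampling argument via \eqref{eq:mart}, so $u_n=c_0$ and $(t,x,y)\in\cS^{[n]}$.

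Third come the monotonicity properties of $c_n$, which I would read off directly from those of $u_n$. If $x'\ge x$ then $u_n(t,x',y)\ge u_n(t,x,y)$ (Proposition \ref{prop:OSvalueFunc}-(ii)), so $u_n(t,x,y)=c_0\Rightarrow u_n(t,x',y)=c_0$, whence $\{y:u_n(t,x',y)=c_0\}\supseteq\{y:u_n(t,x,y)=c_0\}$ and $c_n(t,x')\ge c_n(t,x)$; the same argument with $t\mapsto u_n(t,x,y)$ non-decreasing (Proposition \ref{prop:OSvalueFunc}-(iii)) gives $c_n(\cdot,x)$ non-decreasing. Finally, upper semi-continuity of $(t,x)\mapsto c_n(t,x)$: I would argue that $\cS^{[n]}$ is closed in $[0,T]\times\Sigma$ — this follows from continuity of $u_n$ on $[0,T]\times\Sigma'$ plus the convention handling $y=0$ — and that for a non-decreasing function of each variable separately, closedness of the region $\{y\le c_n(t,x)\}$ forces $c_n$ to be u.s.c. (take $(t_k,x_k)\to(t,x)$; the points $(t_k,x_k,c_n(t_k,x_k))$ lie in $\cS^{[n]}$, and along a subsequence realizing $\limsup c_n(t_k,x_k)=:\ell$ the limit point $(t,x,\ell)\in\cS^{[n]}$ by closedness, so $\ell\le c_n(t,x)$). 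Right-continuity of $c_n(\cdot,x)$ and $c_n(t,\cdot)$ is then automatic for a non-decreasing u.s.c.\ function.

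I expect the main obstacle to be the non-emptiness of $\cS^{[n]}$ for $t<T$ (equivalently, showing $c_n(t,x)>0$ for large $x$ rather than merely $c_n\ge 0$): one must quantitatively control the time $X^{[n]}$ spends in the region where $\partial_yf(\cdot,y)>rc_0$ starting from a large $x$, uniformly over the given drift function $m^{[n-1]}$, and convert this into the statement that $\tau=0$ is optimal in \eqref{eq:valueFuncRepresetation2}. This is where the submartingale property \eqref{eq:mart} and the strict monotonicity in Assumption \ref{ass:f} must be combined carefully; the rest of the proof is a more or less mechanical transfer of properties of $u_n$ to $c_n$.
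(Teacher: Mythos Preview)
Your proposal is essentially correct and follows the same route as the paper: define the boundary from the $y$-sections of $\cS^{[n]}$ (you use a $\sup$ over the stopping set, the paper uses the equivalent $\inf$ over the continuation set), read off monotonicity of $c_n$ from that of $u_n$, and prove upper semi-continuity by the same closedness-of-$\cS^{[n]}$ argument you give.

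The one place where the paper differs is precisely the point you flag as the main obstacle, and the paper's argument is considerably simpler than what you sketch. For $\cS^{[n]}\neq\varnothing$ your observation that $u_n(T,x,y)=c_0$ already suffices for the statement as written. But note that your proposed direct argument for $t<T$ (pick $x$ large, control the sojourn of $X^{[n]}$ in $\{\partial_y f>rc_0\}$, invoke the submartingale property \eqref{eq:mart}) is circular as stated: \eqref{eq:mart} is a property of $u_n$ itself and does not by itself give optimality of $\tau=0$ at a specific point without already knowing $u_n=c_0$ there. The paper sidesteps this entirely by contradiction: if $\cS^{[n]}=\varnothing$ then $\tau=T-t$ is optimal everywhere, so $u_n(t,x,y)=c_0+\E\big[\int_0^{T-t}e^{-rs}(\partial_y f(X^{[n];t,x}_{t+s},y)-rc_0)\ud s\big]$; letting $x\to\infty$ and using monotone convergence together with \eqref{ass:partial-y-f} gives $u_n>c_0$, contradicting $u_n\le c_0$. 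This needs no quantitative control on the dynamics and in fact shows $\cS^{[n]}$ meets every time slice $\{t\}\times\Sigma$ with $t<T$. For $\cC^{[n]}\neq\varnothing$ the paper likewise uses the first exit time $\tau_{\cH}$ from $\cH$ rather than a deterministic time, which is marginally cleaner than your sketch but the same in substance.
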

\begin{proof}
Thanks to (ii) in Proposition \ref{prop:OSvalueFunc}, for any $(t,x)\in[0,T]\times \R$ we can define
\begin{align}\label{def:cn}
c_n(t,x):=\inf\{y\in[0,1]:u_n(t,x,y)<c_0\}=\inf\{y\in[0,1]:(t,x,y)\in\cC^{[n]}\}
\end{align}
with the convention that $\inf\varnothing=1$. Since $x\mapsto u_n(t,x,y)$ and $t\mapsto u_n(t,x,y)$ are non-decreasing we have, for any $\eps>0$
\[
(t,x,y)\in\cS^{[n]}\implies (t,x+\eps,y)\in\cS^{[n]}
\]
and 
\[
(t,x,y)\in\cS^{[n]}\implies (t+\eps,x,y)\in\cS^{[n]}.
\]
Then, $c_n$ is non-decreasing in both $t$ and $x$. 

To show upper semi-continuity we fix $(t,x)$ and take a sequence $(t_k,x_k)_{k\ge 1}$ that converges to $(t,x)$. Then $(t_k,x_k,c_n(t_k,x_k))\in\cS^{[n]}$ for all $k$'s and, since the stopping region is closed, in the limit we get
\[
\limsup_{k\to\infty}\,(t_k,x_k,c_n(t_k,x_k))=(t,x,\limsup_{k\to\infty}c_n(t_k,x_k))\in\cS^{[n]}.
\]
Then, by definition of $c_n$ it must be
\[
\limsup_{k\to\infty}c_n(t_k,x_k)\le c_n(t,x).
\]
It only remains to show that $\cC^{[n]}$ and $\cS^{[n]}$ are both non-empty. A standard argument implies that $[0,T)\times\mathcal{H}\subset \cC^{[n]}$ with $\mathcal H$ the open set in \eqref{def-H}. Indeed, starting from $(t,x,y)\in[0,T)\times \mathcal H$ and taking the suboptimal strategy 
\[
\tau_{\mathcal{H}}:=\inf\{s\in[0,T-t]:(X^{[n];t,x}_{t+s},y)\notin\mathcal H\}
\]
we easily obtain $u_n(t,x,y)\le U_n(t,x,y;\tau_{\mathcal H})<c_0$ by continuity of paths of $X^{[n]}$ and since $\P_{t,x,y}(\tau_{\mathcal H}>0)=1$. So $\cC^{[n]}\neq\varnothing$ because $\mathcal H\neq\varnothing$ thanks to \eqref{ass:partial-y-f} in Assumption \ref{ass:f}. We conclude with an argument by contradiction. Assume that $\mathcal{S}^{[n]}=\varnothing$. Then, given any $(t,x,y)\in[0,T)\times\Sigma$ we have
\begin{equation}
u_n(t,x,y)=c_0+\E\left[\int_0^{T-t}e^{-rs}\left(\partial_yf(X^{[n];t,x}_{t+s},y)-rc_0\right)\ud s\right],
\nonumber
\end{equation}
thanks to \eqref{eq:valueFuncRepresetation2}. Taking limits as $x\rightarrow\infty$ and using monotone convergence to pass it under the expectation and the integral (Assumption \ref{ass:f}-(i)) we get
\begin{equation}
\lim_{x\rightarrow\infty}u_n(t,x,y)-c_0=\E\left[\int_0^{T-t}e^{-rs}\left(\lim_{x\rightarrow\infty}\partial_yf(X^{[n];t,x}_{t+s},y)-rc_0\right)\ud s\right]>0
\nonumber
\end{equation}
thanks to \eqref{ass:partial-y-f}. This contradicts $u_n(t,x,y)\leq c_0$, hence $\cS\neq\varnothing$.
\end{proof}
The optimal boundary is unique and thus it unambiguously characterises the minimal optimal stopping time $\tau_*$. It will also be shown in the proof of Lemma \ref{lem:ST} (see Eq. \eqref{usc}) that $\tau_*$ is in fact the largest optimal stopping time, hence it is unique.

\subsection{Solution of the $n$-th singular control problem}\label{sec:nSC} 
Here we follow a well-trodden path to show that the boundary $c_n$ obtained in the section above is actually all we need to construct the optimal control in the singular control problem $\textbf{SC}^{[n]}$. First we provide the candidate optimal control in the next lemma.
\begin{lemma}\label{lem:SK}
Fix $(t,x,y)\in[0,T]\times\Sigma$ and let $\xi^{[n]*}$ be defined $\P_{t,x,y}$-almost surely as
\[
\xi^{[n]*}_{t+s}:=\sup_{0\le u\le s}\left(c_n(t+u,X^{[n]}_{t+u})-y\right)^+\quad\text{with}\quad \xi^{[n]*}_{u}=0,\: u\in[0,t).
\]
Then, $\xi^{[n]*}\in\Xi_{t,x}(y)$ and realises $\P_{t,x,y}$-almost surely the Skorokhod reflection of the process $(t+s, X_{t+s} ^{[n]},Y_{t+s} ^{[n]*})_{s\in [0,T-t]}$ inside the continuation region $\cC^{[n]}$, where $Y^{[n]*}=y+\xi^{[n]*}$. That is, $\P_{t,x,y}$-almost surely we have
\begin{itemize}
\item[(i)] $(t+s, X^{[n]}_{t+s},Y^{[n]*}_{t+s})\in\overline\cC^{[n]}$ for all $s\in[0,T-t]$ (recall that $\overline\cC^{[n]}$ is the closure of $\cC^{[n]}$);
\item[(ii)] Minimality condition:
\begin{align}\label{eq:minSK}
\int_{[t,T]}\mathbf{1}_{\{Y^{[n]*}_{s-}>c_n(s,X^{[n]}_s)\}}\ud\xi^{[n]*}_{s}=\sum_{t<s\le T}\int_{Y^{[n]*}_{s-}}^{Y^{[n]*}_{s}}\mathbf{1}_{\{Y^{[n]*}_{s-}+z>c_n(s,X^{[n]}_s)\}}\ud z=0.
\end{align}
\end{itemize}
\end{lemma}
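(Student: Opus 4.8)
The plan is to recognise $\xi^{[n]*}$ as the explicit solution of a one-sided Skorokhod reflection problem: we want the upward-only process $Y=y+\xi$ to dominate the random, time-dependent barrier $s\mapsto c_n(t+s,X^{[n]}_{t+s})$, and the minimal non-decreasing $\xi$ achieving this is precisely $\xi^{[n]*}_{t+s}=\sup_{0\le u\le s}(c_n(t+u,X^{[n]}_{t+u})-y)^+$. With this reading, admissibility, (i) and (ii) become verifications of the standard properties of that map.

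\emph{Admissibility.} I would first check $\xi^{[n]*}\in\Xi_{t,x}(y)$. Adaptedness is clear since $c_n$ is Borel (indeed u.s.c., by Proposition \ref{prop:OSboundary}) and $X^{[n]}$ is continuous and adapted, so $u\mapsto c_n(t+u,X^{[n]}_{t+u})$ is progressively measurable and hence so is its running supremum. Monotonicity in $s$ and $\xi^{[n]*}_{t-}=0$ are built into the definition, and the finite-fuel constraint follows from $y+\xi^{[n]*}_{t+s}=\max\big(y,\sup_{0\le u\le s}c_n(t+u,X^{[n]}_{t+u})\big)\in[y,1]$ because $0\le c_n\le1$. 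The only nontrivial point is right-continuity: writing $g(u):=(c_n(t+u,X^{[n]}_{t+u})-y)^+$, the path $u\mapsto(t+u,X^{[n]}_{t+u})$ is continuous and $c_n$ is u.s.c., so $g$ is u.s.c.; and for an u.s.c.\ function one has $\limsup_{\eps\downarrow0}\sup_{s\le u\le s+\eps}g(u)\le g(s)$, from which right-continuity of $s\mapsto\sup_{0\le u\le s}g(u)$ follows.

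\emph{Properties (i) and (ii).} For (i), the defining formula gives $Y^{[n]*}_{t+s}\ge c_n(t+s,X^{[n]}_{t+s})$ for all $s$, $\P_{t,x,y}$-a.s.; then, for any $(t_0,x_0,y_0)$ with $y_0\ge c_n(t_0,x_0)$ and $y_0<1$, the points $(t_0,x_0,y_0+1/k)$ lie in $\cC^{[n]}=\{y>c_n(t,x)\}$ for $k$ large and converge to $(t_0,x_0,y_0)$, so $(t_0,x_0,y_0)\in\overline{\cC^{[n]}}$; applied with $y_0=Y^{[n]*}_{t+s}$ this yields (i), once the corner $y_0=1$ is also handled. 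For (ii) I would decompose $\ud\xi^{[n]*}$ into its continuous part $\ud\xi^{[n]*,c}$ and its atoms. The continuous part does not charge $\{Y^{[n]*}>c_n(\cdot,X^{[n]})\}$ by the classical flat-off property of the running maximum: on the support of $\ud\xi^{[n]*,c}$ one has $\xi^{[n]*}_{t+s}=g(s)$, hence $Y^{[n]*}_{t+s}=c_n(t+s,X^{[n]}_{t+s})$ (or $\xi^{[n]*}_{t+s}=0$). At each jump time $\sigma$ one has $Y^{[n]*}_{\sigma-}=\max\big(y,\sup_{t\le r<\sigma}c_n(r,X^{[n]}_r)\big)<c_n(\sigma,X^{[n]}_\sigma)=Y^{[n]*}_\sigma$, because otherwise the running maximum would not jump at $\sigma$; therefore $Y^{[n]*}_{\sigma-}+z\le c_n(\sigma,X^{[n]}_\sigma)$ throughout the inner integral in \eqref{eq:minSK}, which thus vanishes. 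Combining the two contributions gives \eqref{eq:minSK}.

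The hard part is that $c_n$ is merely upper semi-continuous, not continuous, so the barrier $s\mapsto c_n(t+s,X^{[n]}_{t+s})$ can jump upward even along the continuous path of $X^{[n]}$; this is exactly what forces the care in establishing right-continuity of $\xi^{[n]*}$ and the flat-off property, and it is also why one can only claim that the reflected state stays in the closure $\overline{\cC^{[n]}}$ rather than in $\{y\ge c_n\}$, and must argue separately at the corner $\{c_n=1\}$ — there one either rules out $c_n\equiv1$ on a neighbourhood using Assumption \ref{ass:f} or observes that the reflected state is absorbed at $y=1$.
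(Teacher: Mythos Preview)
Your proposal is correct and follows essentially the same route as the paper: admissibility via upper semi-continuity of $c_n$ (the right-continuity argument is identical), (i) from the defining inequality $Y^{[n]*}_{t+s}\ge c_n(t+s,X^{[n]}_{t+s})$, and (ii) from the fact that the running maximum of the u.s.c.\ barrier increases only where it coincides with the barrier. The only organizational difference is that you split (ii) into continuous and jump parts whereas the paper gives a single pointwise argument (if $Y^{[n]*}_{s-}>c_n(s,X^{[n]}_s)$ then, by u.s.c., $\ud\xi^{[n]*}=0$ on a right-neighbourhood of $s$); your flagging of the corner $y_0=1$ is a point the paper simply passes over.
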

\begin{proof}
Clearly $\xi^{[n]*}$ is non-decreasing, adapted and bounded by $1-y$. So if we prove that it is also right-continuous we have shown that it belongs to $\Xi_{t,x}(y)$. The proof of right-continuity uses ideas as in \cite{de2017optimal}. For any $\eps>0$ we have
\[
\xi^{[n]*}_{t+s}\le \xi^{[n]*}_{t+s+\eps}=\xi^{[n]*}_{t+s}\vee\sup_{0< u\le \eps}\left(c_n(t+s+u,X^{[n]}_{t+s+u})-y\right)^+.
\]
By upper semi-continuity of the boundary and continuity of the trajectories of $X^{[n]}$ we have
\begin{align*}
&\lim_{\eps\to 0}\sup_{0< u\le \eps}\left(c_n(t+s+u,X^{[n]}_{t+s+u})-y\right)^+\\
&=\limsup_{u\to 0}\left(c_n(t+s+u,X^{[n]}_{t+s+u})-y\right)^+\le\left(c_n(t+s,X^{[n]}_{t+s})-y\right)^+ \leq \xi^{[n]*}_{t+s}
\end{align*}
Then, combining the above expressions we get $\xi^{[n]*}_{t+s}=\lim_{\eps\to 0} \xi^{[n]*}_{t+s+\eps}$ as needed.

Next we show the Skorokhod reflection property. By construction we have 
\[
Y^{[n]*}_{t+s}=y+\xi^{[n]*}_{t+s}\ge c_n(t+s,X^{[n]}_{t+s})
\]
so that $(t+s, X^{[n]}_{t+s},Y^{[n]*}_{t+s})\in\overline\cC^{[n]}$ for all $s\in[0,T-t]$ as claimed in (i). For the minimality condition (ii) fix $\omega\in\Omega$ and let $s\in[t,T]$ be such that $Y^{[n]*}_{s-}(\omega)>c_n\big(s,X^{[n]}_s(\omega)\big)$. Then by definition of $Y^{[n]*}$ we have
\begin{align}\label{eq:SKpr}
\sup_{t\le u <s}\left(c_n\big(u,X^{[n]}_u(\omega)\big)-y\right)^+> c_n\big(s,X^{[n]}_s(\omega)\big)-y,
\end{align}
which implies $Y^{[n]*}_{s-}(\omega)=Y^{[n]*}_{s}(\omega)$. The latter and \eqref{eq:SKpr} imply that there exists $\delta>0$ such that  
\begin{align}\label{eq:sup}
\left(c_n\big(s,X^{[n]}_s(\omega)\big)-y\right)^+\le \sup_{t\le u \le s}\left(c_n\big(u,X^{[n]}_u(\omega)\big)-y\right)^+-\delta.
\end{align}
By upper semi-continuity of $s\mapsto c_n\big(s,X^{[n]}_s(\omega)\big)$ there must exist $s'>s$ such that 
\[
\left(c_n\big(u,X^{[n]}_u(\omega)\big)-y\right)^+ \le \left(c_n\big(s,X^{[n]}_s(\omega)\big)-y\right)^++\frac{\delta}{2}
\]
for all $u\in[s,s')$. The latter and \eqref{eq:sup} imply $Y^{[n]*}_{s-}(\omega)=Y^{[n]*}_{u}(\omega)$ for all $u\in[s,s')$. Hence $\ud \xi^{[n]*}(\omega)=0$ on $[s,s')$ as needed to show that the first term in \eqref{eq:minSK} is zero. For the second term, it is enough to notice that by the explicit form of $\xi^{[n]*}$ we easily derive $\{\Delta\xi^{[n]*}_s>0\}=\{Y^{[n]*}_{s-}< c(s,X^{[n]}_s)\}$ for any $s\in[t,T]$. Therefore
\begin{align*}
Y^{[n]*}_{s-}+\Delta\xi^{[n]*}_s=&Y^{[n]*}_{s-}+\xi^{[n]*}_{s-}\vee\left(c_n(s,X^{[n]}_s)-y\right)^+-\xi^{[n]*}_{s-}\\
=&Y^{[n]*}_{s-}+\left(c_n(s,X^{[n]}_s)-Y^{[n]*}_{s-}\right)^+=Y^{[n]*}_{s-}\vee c_n(s,X^{[n]}_s),
\end{align*}
as needed (i.e., any jump of the control $\xi^{[n]*}$ will bring the controlled process to the boundary of the continuation set).
\end{proof}
Using the lemma we can now establish optimality of $\xi^{[n]*}$ and obtain $v_n$ as the integral of $u_n$. The proof of the next proposition follows very closely the proof of Theorem 5.1 in \cite{de2017optimal}, except that here we have a finite-fuel problem (see also \cite{baldursson1996irreversible} and \cite{karoui1991new} for earlier similar proofs). So we move it to the appendix for completeness.
\begin{proposition}[Value function of $\textbf{SC}^{[n]}$] \label{th:theoremConnection}
Let Assumptions \ref{ass:SDE}--\ref{ass:int} hold.
For any $(t,x,y)\in[0,T]\times\Sigma$ we have
\begin{align}
v_n(t,x,y)=\Phi_n(t,x)-\int_y^1u_n(t,x,z)\ud z,
\end{align}
with
\[
\Phi_n(t,x):=\E_{t,x}\left[\int_0^{T-t}e^{-rs}f(X^{[n]}_{t+s},1)\ud s\right].
\]
Moreover, $\xi^{[n]*}$ as in Lemma \ref{lem:SK} is optimal, i.e., $v_n(t,x,y)=J_n(t,x,y;\xi^{[n]*})$.
\end{proposition}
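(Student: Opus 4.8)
The plan is to prove the identity for $v_n$ and the optimality of $\xi^{[n]*}$ together, via the probabilistic route that links singular control to optimal stopping (as in \cite{de2017optimal}, and earlier \cite{baldursson1996irreversible,karoui1991new}); its merit is that it requires no regularity of $u_n$ beyond the continuity obtained in Proposition \ref{prop:OSvalueFunc}. There are two parts: (a) the upper bound $v_n(t,x,y)\le\Phi_n(t,x)-\int_y^1 u_n(t,x,z)\,\ud z$ for every $\xi\in\Xi_{t,x}(y)$, and (b) equality for $\xi=\xi^{[n]*}$, which yields simultaneously the reverse bound and optimality.

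For (a), fix $\xi$ and insert into \eqref{SCn-2} the elementary identity $f(\chi,\eta)=f(\chi,1)-\int_\eta^1\partial_yf(\chi,z)\,\ud z$ (valid on $\Sigma'$ by Assumption \ref{ass:f}); this peels off $\Phi_n(t,x)$ and leaves a remainder built only from $\partial_yf$ and the proportional cost. I would then (i) use Tonelli --- legitimate because $\partial_yf\ge0$ --- to pull the $\ud z$-integral outside the time integral and the expectation, and (ii) apply the occupation-time (``layer-cake'') change of variables to the non-decreasing right-continuous map $s\mapsto\xi_{t+s}$, turning $\int_{[0,T-t]}e^{-rs}c_0\,\ud\xi_{t+s}$ into $c_0\int_y^1 e^{-r\sigma_z}\mathbf 1_{\{\sigma_z\le T-t\}}\,\ud z$, where $\sigma_z:=\inf\{s\ge0:Y^\xi_{t+s}>z\}$ (set $\sigma_z:=+\infty$ if level $z$ is never reached). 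Collecting the pieces gives $J_n(t,x,y;\xi)=\Phi_n(t,x)-\int_y^1\E\big[\int_0^{\sigma_z\wedge(T-t)}e^{-rs}\partial_yf(X^{[n]}_{t+s},z)\,\ud s+c_0e^{-r\sigma_z}\mathbf 1_{\{\sigma_z\le T-t\}}\big]\,\ud z$; the inner bracket is exactly $U_n(t,x,z;\sigma_z)$ and is therefore $\ge u_n(t,x,z)$, so taking the supremum over $\xi$ yields (a).

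For (b), one checks that each inequality of the previous step is an equality when $\xi=\xi^{[n]*}$. The two decisive inputs are the (sub)martingale structure \eqref{eq:mart} --- equivalently, $u_n(t,x,z)=\E\big[\int_0^{\tau^{[n]}_*}e^{-rs}\partial_yf(X^{[n]}_{t+s},z)\,\ud s+c_0e^{-r\tau^{[n]}_*}\mathbf 1_{\{\tau^{[n]}_*<T-t\}}\big]$, with $\tau^{[n]}_*=\tau^{[n]}_*(t,x,z)$ the first entry of the height-$z$ section of $(t+\cdot,X^{[n]})$ into $\cS^{[n]}$ --- and the minimality condition \eqref{eq:minSK}, which says $\xi^{[n]*}$ grows only while $Y^{[n]*}$ lies on the free boundary $\{y=c_n(t,x)\}$. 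From the explicit formula for $\xi^{[n]*}$, the crossing time $\sigma_z$ agrees, for Lebesgue-a.e.\ $z\in(y,1)$, with $\tau^{[n]}_*(t,x,z)$, and \eqref{eq:minSK} makes the ``forgone profit'' and ``cost'' pieces at level $z$ recombine into precisely $u_n(t,x,z)$; hence $J_n(t,x,y;\xi^{[n]*})=\Phi_n(t,x)-\int_y^1 u_n(t,x,z)\,\ud z$, and a limiting argument in $y$ covers the boundary value $y=0$. Joint measurability of $\xi^{[n]*}$ in $(\omega,x,y)$ and measurability of $(x,y)\mapsto\E_{x,y}[Y^{[n]*}_t]$ --- needed to iterate the scheme --- follow from that same explicit formula together with the upper semicontinuity of $c_n$ (Proposition \ref{prop:OSboundary}).

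The step I expect to be the main obstacle is the identification ``inner bracket $=U_n(t,x,z;\sigma_z)\ge u_n(t,x,z)$'' in (a). The crossing time $\sigma_z$ is a stopping time for the filtration generated by $(X^{[n]},\xi)$, which in general strictly enlarges the Brownian filtration defining $\T_t$, so one must first show that the value $u_n$ of {\em $\textbf{OS}^{[n]}$} is unchanged if the infimum is taken over stopping times of the larger filtration $(\cF_{t+s})_s$; this is true because $W$ stays a Brownian motion for $\mathbb F$ and $X^{[n]}$ stays a strong (hence Markov) solution, so enlarging the competitor class cannot decrease the infimum, but it is the place where care is needed. A secondary, purely technical nuisance --- handled by continuity of $u_n$ (Proposition \ref{prop:OSvalueFunc}) and dominated convergence under Assumption \ref{ass:int} --- is the bookkeeping of the atoms of $\xi$ at the initial and terminal times and of the Lebesgue-null set of levels $z$ on which $\sigma_z$ and $\tau^{[n]}_*(t,x,z)$ disagree. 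Apart from these points the argument is routine and parallels \cite[Theorem 5.1]{de2017optimal} closely, the only genuinely new feature being the finite-fuel constraint, which confines the level variable to $z\in[y,1]$.
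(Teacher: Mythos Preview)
Your plan is the paper's: decompose $J_n$ via the level sets of $Y^\xi$ using the layer-cake identity, bound each level-$z$ piece below by $u_n(t,x,z)$ by recognising the crossing time as an admissible competitor in $\textbf{OS}^{[n]}$, and for part (b) check that the crossing time of $\xi^{[n]*}$ agrees for a.e.\ $z$ with $\tau^{[n]}_*(t,x,z)$. The filtration-enlargement point you flag is real and the paper glosses over it; your measurability remarks at the end are also correct. The only substantive difference is that the paper uses the \emph{truncated} right-inverse $\tau^\zeta(q):=\inf\{s\in[t,T):\zeta_s>q\}\wedge T$, which is always an admissible stopping time, whereas you set $\sigma_z=+\infty$ when level $z$ is never reached.

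That difference is precisely where your step (a) breaks. The claim ``the inner bracket is exactly $U_n(t,x,z;\sigma_z)$'' is false on the event $\{\sigma_z>T-t\}$: there your bracket is $\int_0^{T-t}e^{-rs}\partial_yf(X^{[n]}_{t+s},z)\,\ud s$ with \emph{no} $c_0$-term, whereas $U_n(t,x,z;\tau)$ always carries the cost $c_0e^{-r\tau}$ for admissible $\tau\le T-t$. For any $\xi$ with $\xi_T<1-y$ and any $z\in(y+\xi_T,1]$ this event has probability one, and your bracket's expectation equals $U_n(t,x,z;T-t)-c_0e^{-r(T-t)}$, which is not $\ge u_n(t,x,z)$ in general; hence the upper bound $J_n(\xi)\le w$ does not follow from your argument. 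In the paper's version the truncated $\tau^\zeta(z-y)$ is an honest competitor in $\textbf{OS}^{[n]}$ for every $z\in(y,1]$, so $U_n(t,x,z;\tau^\zeta(z-y))\ge u_n(t,x,z)$ holds throughout; the compensating mass that your indicator discards is instead carried by the change-of-variables identity $\int_y^1 e^{-r\tau^\zeta(z-y)}\,\ud z=\int_{[t,T]}e^{-rs}\,\ud\zeta_s$, which you should also scrutinise for controls that do not exhaust the fuel.
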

It turns out that the optimal control found above is also unique due to the strict concavity of $f(x,\,\cdot\,)$ (Assumption \ref{ass:f}-(ii)). A proof of uniqueness is given in Proposition \ref{prop:ex-uniq} below in a similar context and we omit it here to avoid repetitions.

\subsection{Limit of the iterative scheme}
Now that we have characterised the solution of the $n$-th singular control problem, we turn to the study of convergence of the iterative scheme. First we show monotonicity of the scheme in terms of the sequence of value functions $(u_n)_{n\ge 0}$ of the stopping problems.
\begin{proposition}[Monotonicity of the iterative scheme]\label{teo:monotonicity}
Under Assumptions \ref{ass:SDE}--\ref{ass:int} we have $u_n\ge u_{n+1}$ on $[0,T]\times\Sigma$ and $c_n\ge c_{n+1}$ on $[0,T]\times\mathbb R$. Moreover, for any $(t,x,y)\in[0,T]\times\Sigma$ we also have 
\begin{align}\label{eq:comp-Xn}
\text{$X^{[n]}_{t+s}\ge X^{[n+1]}_{t+s}$ and $Y^{[n]*}_{t+s}\ge Y^{[n+1]*}_{t+s}$ for $s\in[0,T-t]$, $\P_{t,x,y}$-a.s.}
\end{align}
Finally, $m^{[n]}\ge m^{[n+1]}$ on $[0,T]$.
\end{proposition}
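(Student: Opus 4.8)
The plan is to prove all four assertions at once by induction on $n\ge 0$, taking as induction hypothesis
\[
P(n):\qquad m^{[n-1]}\ge m^{[n]}\ \text{ on }[0,T].
\]
The base case $P(0)$ is immediate, since $m^{[-1]}\equiv 1$ while $m^{[0]}$ takes values in $[0,1]$. For the inductive step I would, for a generic $n$, transfer monotonicity along the chain: $m^{[n-1]}\ge m^{[n]}$ implies $X^{[n]}\ge X^{[n+1]}$, which implies $u_n\ge u_{n+1}$, which implies $c_n\ge c_{n+1}$, which (together with the preceding facts) implies $Y^{[n]*}\ge Y^{[n+1]*}$, which implies $m^{[n]}\ge m^{[n+1]}$ --- the latter being precisely $P(n+1)$. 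Running this chain for every $n$ then delivers all the inequalities in the statement (for $u_n$, $c_n$ on the whole domains and for $X^{[n]}$, $Y^{[n]*}$ along the way).

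The first implication is a comparison of diffusions of the same type as in Lemma~\ref{lem:comparisonTime}: $X^{[n];t,x}$ and $X^{[n+1];t,x}$ solve \eqref{eq:Xn} driven by the same Brownian motion, with common diffusion coefficient $\sigma$ and drifts $a(\cdot,m^{[n-1]}(\cdot))\ge a(\cdot,m^{[n]}(\cdot))$ (using $m^{[n-1]}\ge m^{[n]}$ and Assumption~\ref{ass:SDE}-(ii)), so \cite[Proposition 5.2.18]{karatzasShreve} yields $X^{[n];t,x}_{t+s}\ge X^{[n+1];t,x}_{t+s}$ for all $s\in[0,T-t]$, $\P$-a.s. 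The second implication reuses the comparison trick of Proposition~\ref{prop:OSvalueFunc}: for $(t,x,y)\in[0,T]\times\Sigma'$ take $\tau_n:=\tau^{[n]}_*(t,x,y)$, which is optimal for $u_n(t,x,y)$ and admissible for $u_{n+1}(t,x,y)$, and estimate
\[
u_{n+1}(t,x,y)-u_n(t,x,y)\ \le\ \E_{t,x}\!\left[\int_0^{\tau_n}e^{-rs}\big(\partial_yf(X^{[n+1]}_{t+s},y)-\partial_yf(X^{[n]}_{t+s},y)\big)\,\ud s\right]\ \le\ 0,
\]
using $X^{[n+1]}\le X^{[n]}$ and $x\mapsto\partial_yf(x,y)$ non-decreasing (Assumption~\ref{ass:f}-(i)); the case $y=0$ is trivial because $u_n(t,x,0)=c_0=u_{n+1}(t,x,0)$ (or follows by continuity when $\partial_yf(\cdot,0)<\infty$). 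The third implication is immediate from the definition \eqref{def:cn}: $\{y:u_n(t,x,y)<c_0\}\subseteq\{y:u_{n+1}(t,x,y)<c_0\}$, hence $c_n(t,x)\ge c_{n+1}(t,x)$.

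The fourth implication is where the most ingredients combine and is the step I would write out with the most care. By Lemma~\ref{lem:SK}, $Y^{[n]*}_{t+s}=y\vee\sup_{0\le u\le s}c_n(t+u,X^{[n]}_{t+u})$ and similarly for $n+1$; chaining $c_n\ge c_{n+1}$, the spatial monotonicity $x\mapsto c_{n+1}(t,x)$ of Proposition~\ref{prop:OSboundary}, and $X^{[n]}\ge X^{[n+1]}$ gives $c_n(t+u,X^{[n]}_{t+u})\ge c_{n+1}(t+u,X^{[n]}_{t+u})\ge c_{n+1}(t+u,X^{[n+1]}_{t+u})$ for every $u$, hence $Y^{[n]*}_{t+s}\ge Y^{[n+1]*}_{t+s}$ for all $s\in[0,T-t]$, $\P_{t,x,y}$-a.s. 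Finally, taking $s=t$ for the processes started at time $0$ and integrating against $\nu$ gives
\[
m^{[n]}(t)=\int_\Sigma\E_{x,y}\!\big[Y^{[n]*}_t\big]\,\nu(\ud x,\ud y)\ \ge\ \int_\Sigma\E_{x,y}\!\big[Y^{[n+1]*}_t\big]\,\nu(\ud x,\ud y)=m^{[n+1]}(t),
\]
which is $P(n+1)$ and closes the induction. None of the steps is individually difficult; the only point requiring real attention is the ordering --- within the inductive step the five implications must be invoked in exactly the above order so that each uses only facts already established, and the comparison/optimality arguments for $u_n$ must be run on $\Sigma'$ and then extended to $y=0$ through the standing convention on $\partial_yf(\cdot,0)$.
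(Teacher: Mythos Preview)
Your proof is correct and follows essentially the same approach as the paper: an induction on $P(n):m^{[n-1]}\ge m^{[n]}$ with base case $m^{[-1]}\equiv1$, then the chain $m^{[n-1]}\ge m^{[n]}\Rightarrow X^{[n]}\ge X^{[n+1]}\Rightarrow u_n\ge u_{n+1}\Rightarrow c_n\ge c_{n+1}\Rightarrow Y^{[n]*}\ge Y^{[n+1]*}\Rightarrow m^{[n]}\ge m^{[n+1]}$, using the SDE comparison principle, monotonicity of $\partial_yf$ in $x$, the definition of $c_n$, and the explicit Skorokhod form of $\xi^{[n]*}$. The only cosmetic difference is that in the $Y$-comparison you chain via $c_{n+1}(\cdot,X^{[n]})\ge c_{n+1}(\cdot,X^{[n+1]})$ whereas the paper chains via $c_n(\cdot,X^{[n+1]})\le c_n(\cdot,X^{[n]})$; both are valid since $x\mapsto c_k(t,x)$ is non-decreasing for every $k$.
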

\begin{proof}
We argue by induction and assume that for some $n\ge 0$ we have $m^{[n-1]}\ge m^{[n]}$ on $[0,T]$. Then, by monotonicity of the drift coefficient (Assumption \ref{ass:SDE}-(ii)), we have $a(x,m^{[n]}(t))\le a(x,m^{[n-1]}(t))$ for all $(t,x)\in[0,T]\times \mathbb R$. It follows from comparison results for SDEs \citep[see, e.g.,][Proposition 5.2.18]{karatzasShreve} and \eqref{eq:Xn} that $X^{[n]}_{t+s}\ge X^{[n+1]}_{t+s}$ for all $s\in[0,T-t]$, $\P_{t,x}$-a.s., for all $(t,x)\in[0,T]\times \mathbb R$.
By monotonicity of the profit function (Assumption \ref{ass:f}-(i)) we have $\partial_yf(X^{[n+1]}_{t+s},y)\le \partial_y f(X^{[n]}_{t+s},y)$ and therefore \eqref{eq:un} and \eqref{eq:Un} imply $u_{n+1}\le u_n$ on $[0,T]\times\Sigma$. The latter and the definition of the optimal boundary in \eqref{def:cn} give us $c_{n+1}\le c_n$ on $[0,T]\times\mathbb R$. Now, using the definition of the optimal control in Lemma \ref{lem:SK} we have $\P_{t,x,y}$-a.s.
\begin{align*}
\xi^{[n+1]*}_{t+s}=&\sup_{0\le u\le s}\left(c_{n+1}(t+u,X^{[n+1]}_{t+u})-y\right)^+\le\sup_{0\le u\le s}\left(c_{n}(t+u,X^{[n+1]}_{t+u})-y\right)^+\\
\le &\sup_{0\le u\le s}\left(c_{n}(t+u,X^{[n]}_{t+u})-y\right)^+=\xi^{[n]*}_{t+s},
\end{align*}
where the first inequality is due to $c_n\ge c_{n+1}$ and the second one to $X^{[n]}\ge X^{[n+1]}$, since $x\mapsto c_n(t,x)$ is non-decreasing (Proposition \ref{prop:OSboundary}). Monotonicity of the optimal controls implies monotonicity of the optimally controlled processes $Y^{[n]*}_{t+s}\ge Y^{[n+1]*}_{t+s}$ for all $s\in[0,T-t]$ and from the latter we obtain
\[
m^{[n+1]}(t)=\int_\Sigma \E_{x,y}\left[Y^{[n+1]*}_t\right]\nu(\ud x,\ud y)\le\int_\Sigma \E_{x,y}\left[Y^{[n]*}_t\right]\nu(\ud x,\ud y)=m^{[n]}(t). 
\]
So the argument is complete once we show that we can find $n\ge 0$ such that $m^{[n-1]}\ge m^{[n]}$ on $[0,T]$. The latter is true in particular for $n=0$ since $m^{[-1]}\equiv 1$ and $m^{[0]}\le 1$ on $[0,T]$.
\end{proof}

It is clear that by construction $0\le c_{n}(t,x)\le 1$ and $0\le m^{[n]}(t)\le 1$ for all $(t,x)\in[0,T]\times \mathbb R$ and all $n\ge 0$. Moreover, $a(x,0)\le a(x,m^{[n]}(t))\le a(x,1)$ for all $(t,x)\in[0,T]\times \mathbb R$ and all $n\ge 0$, so that by the comparison principle $\bar X^0_{t+s}\le X^{[n]}_{t+s}\le X^{[0]}_{t+s}$, for all $s\in[0,T-t]$, $\P_{t,x,y}$-a.s.~for all $n\ge 0$ and with $\bar X^0$ the solution of \eqref{eq:Xn} associated to $a(x,0)$.

By monotonicity of the sequences $(u_n)_{n\ge 0}$, $(c_n)_{n\ge 0}$ and $(m^{[n]})_{n\ge 0}$ we can define the functions 
\begin{align}\label{eq:lims}
u(t,x,y):=&\,\lim_{n\to\infty}u_n(t,x,y),\quad c(t,x):=\lim_{n\to\infty}c_n(t,x)\\
&\:\:\text{and}\:\: \widetilde m(t):=\lim_{n\to\infty}m^{[n]}(t),\notag
\end{align}
for all $(t,x,y)\in[0,T]\times\Sigma$. Pointwise limit preserves the monotonicity of $\widetilde{m}$, $c$ and $u$ with respect to $(t,x,y)$. Moreover, since $u_n$ is continuous and $c_n$, $m^{[n]}$ are upper semi-continuous for all $n\ge 0$ we have that
\begin{align}\label{eq:usc-lim}
\textit{the functions $u$, $\widetilde{m}$ and $c$ are upper semi-continuous}
\end{align}
on their respective domains as decreasing limit of upper semi-continuous functions. Since $\widetilde{m}$ is also non-decreasing, then it must be right-continuous.

Notice that for each $n\ge 0$ the null set in \eqref{eq:comp-Xn} depends on $n$ and $(t,x,y)$ so we denote it by $N^n_{t,x,y}$. Then we can define a universal null set $N_{t,x,y}:=\cup_{n\ge 0}N^n_{t,x,y}$ and for any $(t,x,y)\in[0,T]\times\Sigma$ and all $\omega\in\Omega\setminus N_{t,x,y}$ we define the processes $\widetilde X$ and $\widetilde \xi$ as
\begin{align}\label{eq:lim-Xi}
\widetilde X_{t+s}(\omega):=\lim_{n\to\infty}X^{[n]}_{t+s}(\omega)\quad\text{and}\quad \widetilde\xi_{t+s}(\omega):=\lim_{n\to\infty}\xi^{[n]*}_{t+s}(\omega),
\end{align} 
for all $s\in[0,T-t]$. We can then set $\widetilde X\equiv0$ and $\widetilde\xi\equiv0$ on $N_{t,x,y}$ and recall that the filtration is completed with $\P_{t,x,y}$-null sets, so that the limit processes are adapted. Of course we also have 
\[
\widetilde Y_t:=y+\widetilde{\xi}_t=\lim_{n\to\infty} Y^{[n]*}_t
\]
and thanks to monotone convergence we can immediately establish
\begin{align}\label{eq:m-lim}
\widetilde{m}(t)=\lim_{n\to\infty}\int_\Sigma\E_{x,y}\big[Y^{[n]*}_t\big]\nu(\ud x,\ud y)=\int_\Sigma\E_{x,y}\big[\,\widetilde Y_t\,\big]\nu(\ud x,\ud y).
\end{align}
Notice that here we are using that $(x,y)\mapsto\E_{x,y}[\xi^{[n]*}_t]$ is measurable, thanks to the explicit expression of $\xi^{[n]*}$ and measurability of $c_n$. Therefore $(x,y)\mapsto\E_{x,y}[\,\widetilde \xi_t\,]$ is measurable too as pointwise limit of measurable functions.

We now derive the dynamics of $\widetilde X$ and show that $\widetilde \xi\in\Xi_{t,x}(y)$.
\begin{lemma}[Limit state processes]\label{lem:convXY} 
Suppose Assumptions \ref{ass:SDE}--\ref{ass:int} hold. For any $(t,x,y)\in[0,T]\times\Sigma$ the process $\widetilde X$ is the unique strong solution of
\begin{equation}
\widetilde X_{t+s}=x+\int_0^s a\big(\widetilde X_{t+u},\widetilde{m}(t+u)\big)\ud u+\int_0^s\sigma\big(\widetilde X_{t+u}\big) \ud W_{t+u},\quad s\in[0,T-t], \label{limit-SDE}
\end{equation}
and the process $\widetilde \xi$ belongs to $\Xi_{t,x}(y)$.
\end{lemma}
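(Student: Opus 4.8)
The plan is to treat the two assertions separately: first identify $\widetilde X$ as the unique strong solution of \eqref{limit-SDE}, and then check that $\widetilde\xi$ meets all the requirements in the definition \eqref{eq:Xitxy} of $\Xi_{t,x}(y)$. For the first part I would \emph{not} try to argue directly from the pointwise monotone convergence $X^{[n]}_{t+s}\downarrow\widetilde X_{t+s}$ (which only guarantees that the limit path is u.s.c.\ in $s$), but rather establish a quantitative $L^2$ estimate. Starting from the sandwich $\bar X^0_{t+s}\le X^{[n]}_{t+s}\le X^{[0]}_{t+s}$ recorded before \eqref{eq:lims}, standard moment bounds for SDEs with Lipschitz coefficients give $\sup_n\sup_{s\in[0,T-t]}|X^{[n]}_{t+s}|\le\Gamma$ for some $\Gamma\in L^p(\P_{t,x,y})$, $p\ge1$. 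Then, using the Lipschitz property of $a$ and $\sigma$ (Assumption \ref{ass:SDE}-(i)), Doob's and the Burkholder--Davis--Gundy inequalities and Gronwall's lemma applied to $u\mapsto\E_{t,x,y}\big[\sup_{r\le u}|X^{[n]}_{t+r}-X^{[m]}_{t+r}|^2\big]$, I would obtain
\[
\E_{t,x,y}\Big[\sup_{s\in[0,T-t]}|X^{[n]}_{t+s}-X^{[m]}_{t+s}|^2\Big]\le C\int_0^{T-t}\big|m^{[n-1]}(t+u)-m^{[m-1]}(t+u)\big|^2\,\ud u,
\]
with $C=C(L,T-t)$. Since $m^{[k]}\to\widetilde m$ pointwise with $0\le m^{[k]}\le1$, dominated convergence makes the right-hand side vanish as $n,m\to\infty$, so $(X^{[n]})_n$ is Cauchy in $L^2(\Omega;C([t,T]))$; its limit is a continuous, progressively measurable process which, by the a.s.\ pointwise convergence \eqref{eq:lim-Xi}, coincides with $\widetilde X$, $\P_{t,x,y}$-a.s. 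In particular $\widetilde X$ has a continuous modification (with which we work) and $\E_{t,x,y}[\sup_s|X^{[n]}_{t+s}-\widetilde X_{t+s}|^2]\to0$.

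With this in hand, I would pass to the limit in \eqref{eq:Xn}. For the drift term, $|a(X^{[n]}_{t+u},m^{[n-1]}(t+u))-a(\widetilde X_{t+u},\widetilde m(t+u))|\le L|X^{[n]}_{t+u}-\widetilde X_{t+u}|+L|m^{[n-1]}(t+u)-\widetilde m(t+u)|$, and integrating over $[0,s]$ and taking expectations the first contribution tends to $0$ by the $L^1$ convergence just obtained, the second by dominated convergence; for the stochastic integral, the It\^o isometry together with $|\sigma(X^{[n]}_{t+u})-\sigma(\widetilde X_{t+u})|\le L|X^{[n]}_{t+u}-\widetilde X_{t+u}|$ gives $L^2$ convergence. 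Hence \eqref{limit-SDE} holds for each fixed $s$, $\P_{t,x,y}$-a.s., and since both sides are continuous in $s$ it holds for all $s$ simultaneously, a.s. Strong uniqueness is then classical, as $x\mapsto a(x,\widetilde m(t))$ and $\sigma$ are Lipschitz uniformly in $t$ and $\widetilde m$ is bounded and measurable (cf.\ \cite[Theorem 5.2.9]{karatzasShreve}).

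For the second assertion, the facts that $\widetilde\xi_u=0$ on $[0,t)$, that $(\widetilde\xi_{t+s})_s$ is adapted (the filtration being $\P_{t,x,y}$-complete), non-decreasing in $s$, and that $y+\widetilde\xi_{t+s}\in[0,1]$, all pass to the limit at once from the corresponding properties of $\xi^{[n]*}\in\Xi_{t,x}(y)$. The only delicate point is right-continuity of $\widetilde\xi$, and here the strategy is to identify $\widetilde\xi$ with the Skorokhod reflection at the limiting boundary $c$. First I would show that for $\omega\notin N_{t,x,y}$ and every $u$, $c_n(t+u,X^{[n]}_{t+u})\to c(t+u,\widetilde X_{t+u})$: the ``$\ge$'' direction uses $X^{[n]}_{t+u}\ge\widetilde X_{t+u}$, the monotonicity of $x\mapsto c_n(t,x)$ (Proposition \ref{prop:OSboundary}) and $c_n\ge c$, while the ``$\le$'' direction uses $c_n\downarrow c$ and the u.s.c.\ of $c_m$ along $(t+u,X^{[n]}_{t+u})\to(t+u,\widetilde X_{t+u})$. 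Setting $f_n(u):=(c_n(t+u,X^{[n]}_{t+u})-y)^+$, each $f_n$ is u.s.c.\ on the compact set $[0,s]$ and $f_n\downarrow f$ with $f(u)=(c(t+u,\widetilde X_{t+u})-y)^+$; a Dini-type argument for decreasing sequences of u.s.c.\ functions on a compact set (pick maximisers $u_n$ of $f_n$, extract a convergent subsequence and use u.s.c.) gives $\sup_{0\le u\le s}f_n(u)\downarrow\sup_{0\le u\le s}f(u)$, that is
\[
\widetilde\xi_{t+s}=\sup_{0\le u\le s}\big(c(t+u,\widetilde X_{t+u})-y\big)^+,\qquad s\in[0,T-t].
\]
Right-continuity of $\widetilde\xi$ then follows verbatim from the argument in the proof of Lemma \ref{lem:SK}, now with the u.s.c.\ boundary $c$ (see \eqref{eq:usc-lim}) and the continuous process $\widetilde X$ in place of $c_n$ and $X^{[n]}$. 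Hence $\widetilde\xi\in\Xi_{t,x}(y)$.

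I expect the main obstacle to be the passage from the mere pointwise monotone convergence of $X^{[n]}$ to genuine path-continuity of $\widetilde X$ and to convergence in the SDE: this is what forces the quantitative Gronwall/BDG estimate above rather than a bare dominated-convergence argument. A secondary subtlety is that the controls $\xi^{[n]*}$ are \emph{suprema over time} of the (only u.s.c.) boundaries, so interchanging the supremum over $[0,s]$ with the limit in $n$ is not automatic and needs the compactness/u.s.c.\ argument just sketched.
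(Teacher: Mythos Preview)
Your proposal is correct, but it diverges from the paper in one place where a much shorter argument is available.

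For the SDE part, the paper does essentially what you do but skips the Cauchy step: it lets $X'$ be the (a~priori existing, unique) strong solution of \eqref{limit-SDE} and applies the same Lipschitz/Doob/Gronwall estimate directly to $\E_{t,x}\big[\sup_{s}|X^{[n]}_{t+s}-X'_{t+s}|^2\big]$, obtaining the bound $c\int_0^{T-t}|m^{[n]}(t+s)-\widetilde m(t+s)|^2\,\ud s\to 0$. This identifies the pointwise limit $\widetilde X$ with $X'$ in one stroke and gives continuity of paths for free; your Cauchy argument reaches the same conclusion with one extra step.

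For the admissibility of $\widetilde\xi$ you work considerably harder than necessary. The paper's argument for right-continuity is a two-line observation you may want to keep in your toolkit: each $\xi^{[n]*}$ is non-decreasing and right-continuous, hence upper semi-continuous in $s$; a decreasing pointwise limit of u.s.c.\ functions is u.s.c.; and a non-decreasing u.s.c.\ function is automatically right-continuous. That is all that is needed to place $\widetilde\xi$ in $\Xi_{t,x}(y)$. Your route via the explicit identification $\widetilde\xi_{t+s}=\sup_{0\le u\le s}(c(t+u,\widetilde X_{t+u})-y)^+$ is valid (the Dini-type step for decreasing u.s.c.\ families on a compact goes through as you sketch), and it has the merit of proving more: the paper establishes this very identification only later, as a consequence of uniqueness of the optimal control in Proposition~\ref{prop:ex-uniq}. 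So your approach front-loads a result the paper defers, at the cost of a longer proof of the lemma as stated.
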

\begin{proof}
Fix $(t,x,y)\in[0,T]\times\Sigma$. The first observation is that $\widetilde X$ and $\widetilde \xi$ are $(\cF_{t+s})_{s\ge 0}$-adapted processes as pointwise limit of adapted processes on $\Omega\setminus N_{t,x,y}$ and by $\P_{t,x,y}$-completeness of the filtration. Since $\widetilde\xi$ is decreasing limit of right-continuous non-decreasing processes (hence upper semi-continuous), then it is also non-decreasing and upper-semi continuous. The latter two properties imply right-continuity of the limit process $\widetilde\xi$ as well. Since $\xi^{[n]*}_{u}=0$ for $u\in[0,t)$ and $\xi^{[n]*}_{T}\le 1-y$, for all $n\ge 0$, we also have $\widetilde\xi_{u}=0$ for $u\in[0,t)$ and $\widetilde\xi_T\le 1-y$. Hence $\widetilde \xi\in\Xi_{t,x}(y)$.

Let us now prove \eqref{limit-SDE}. Denote by $X'$ the unique strong solution of \eqref{limit-SDE} and let us show that $\widetilde X=X'$. By standard estimates and using Lipschitz continuity of the drift $a(\,\cdot\,)$ (Assumption \ref{ass:SDE}-(i)) we have
\begin{align*}
&\E_{t,x}\left[\sup_{0\le s\le T-t}\left|X^{[n]}_{t+s}-X'_{t+s}\right|^2\right]\\
&\le 2\,\E_{t,x}\left[L^2\cdot T\int_0^{T-t}\left(\big|X^{[n]}_{t+s}-X'_{t+s}\big|^2+\big|m^{[n]}(t+s)-\widetilde{m}(t+s)\big|^2\right)\ud s\right]\\
&\quad\quad+2\,\E_{t,x}\left[\sup_{0\le s\le T-t}\left|\int_0^{s}\big(\sigma(X^{[n]}_{t+u})-\sigma(X'_{t+u})\big)\ud W_{t+u}\right|^2\right].
\end{align*}
Since $\sigma$ enjoys linear growth and $X^{[n]}$ and $X'$ are solutions of SDEs with Lipschitz coefficients, then
\[
s\mapsto \int_0^{s}\big(\sigma(X^{[n]}_{t+u})-\sigma(X'_{t+u})\big)\ud W_{t+u}
\] 
is a martingale on $[0,T-t]$ and we can use Doob's inequality to get
\begin{align*}
&\E_{t,x}\left[\sup_{0\le s\le T-t}\left|\int_0^{s}\big(\sigma(X^{[n]}_{t+u})-\sigma(X'_{t+u})\big)\ud  W_{t+u}\right|^2\right]\\
&\le 4\,\E_{t,x}\left[\int_0^{T-t}\big(\sigma(X^{[n]}_{t+s})-\sigma(X'_{t+s})\big)^2\ud s\right]\le 4 L^2 \E_{t,x}\left[\int_0^{T-t}\big|X^{[n]}_{t+s}-X'_{t+s}\big|^2\ud s\right],
\end{align*}
Combining the estimates above and using Gronwall's inequality we obtain
\[
\E_{t,x}\left[\sup_{0\le s\le T-t}\left|X^{[n]}_{t+s}-X'_{t+s}\right|^2\right]\le c\int_0^{T-t}\big|m^{[n]}(t+s)-\widetilde{m}(t+s)|^2\ud s,
\]
for some constant $c>0$. Letting $n\to\infty$ and using bounded convergence and the definition of $\widetilde{m}$ we conclude. 
\end{proof}

Next we connect $u(\,\cdot\,)$ and $c(\,\cdot\,)$ with an optimal stopping problem for $\widetilde X$. Recall that $u$ and $c$ are upper semi-continuous by \eqref{eq:usc-lim} and enjoy the same monotonicity properties of $u_n$ and $c_n$. Recall also the convention $u_n(t,x,0)=c_0$ if $\partial_y f(x,0)=\infty$.
\begin{lemma}[Limit optimal stopping problem]\label{lem:limitOS} 
Suppose Assumptions \ref{ass:SDE}--\ref{ass:int} hold. Then, for all $(t,x,y)\in[0,T]\times\Sigma$ we have
\begin{equation}
\label{eq:uOS}
\begin{split}
u(t,x,y)&=\inf_{\tau\in\mathcal T_t}U(t,x,y;\tau)\qquad \text{with}\\
U(t,x,y;\tau)&:= \E_{t,x}\left[\int_0^\tau e^{-rs}\partial_y f(\widetilde X_{t+s},y)\ud s+c_0e^{-r\tau}\right]
\end{split}
\end{equation}
and
\[
c(t,x)=\inf\{y\in[0,1]:u(t,x,y)<c_0\}\quad\text{with $\inf\varnothing = 1$}.
\]
In particular $c$ is the boundary of the set
\begin{align}\label{cC}
\cC:=\{(t,x,y)\in[0,T]\times\Sigma: u(t,x,y)<c_0\}
\end{align}
and, moreover, both $\cC$ and $\cS:=([0,T]\times\Sigma)\setminus \cC$ are not empty.
\end{lemma}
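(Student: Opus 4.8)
The plan is to prove \eqref{eq:uOS} by the two inequalities $u\le\tilde u$ and $u\ge\tilde u$, where $\tilde u(t,x,y):=\inf_{\tau\in\mathcal T_t}U(t,x,y;\tau)$ denotes its right-hand side, and then to read off the description of $c$ and the non-emptiness of $\cC,\cS$ from the monotone-limit structure already in place. Throughout, the degenerate slice $y=0$ is handled by the standing convention $u_n(\cdot,\cdot,0)=u(\cdot,\cdot,0)=c_0$, which is consistent with \eqref{eq:uOS} since $\partial_yf(x,0)=+\infty$ makes $\tau=0$ optimal, so I fix $(t,x,y)\in[0,T]\times\Sigma'$. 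For $u\le\tilde u$ I would fix $\tau\in\mathcal T_t$ and pass to the limit in $U_n(t,x,y;\tau)$: since $X^{[n]}\downarrow\widetilde X$ by \eqref{eq:lim-Xi} and $x\mapsto\partial_yf(x,y)$ is continuous and non-decreasing (Assumption~\ref{ass:f}), the integrands $\partial_yf(X^{[n]}_{t+s},y)$ decrease to $\partial_yf(\widetilde X_{t+s},y)$ and are dominated by $\partial_yf(X^{[0]}_{t+s},y)$, which is integrable on $[0,T-t]$ by Assumption~\ref{ass:int} (applied with $m=m^{[-1]}\equiv1$); dominated convergence gives $U_n(t,x,y;\tau)\to U(t,x,y;\tau)$, and since $u_n(t,x,y)\le U_n(t,x,y;\tau)$ with $u_n\downarrow u$ I obtain $u(t,x,y)\le U(t,x,y;\tau)$ for every $\tau$, hence $u\le\tilde u$.

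The reverse inequality is the crux. Let $\tau_n^*=\tau_*^{[n]}(t,x,y)$ be the smallest optimal time in \eqref{tau*n}, so that $u_n(t,x,y)=U_n(t,x,y;\tau_n^*)$ and, using $\cS^{[n]}=\{y\le c_n\}$ from Proposition~\ref{prop:OSboundary}, $\tau_n^*=\inf\{s\in[0,T-t]:y\le c_n(t+s,X^{[n]}_{t+s})\}$. I would first show that $\tau_n^*$ is non-decreasing in $n$: combining $c_{n+1}\le c_n$ and $X^{[n+1]}\le X^{[n]}$ (Proposition~\ref{teo:monotonicity}) with the monotonicity of $x\mapsto c_n(t,x)$ (Proposition~\ref{prop:OSboundary}) gives $c_{n+1}(t+s,X^{[n+1]}_{t+s})\le c_n(t+s,X^{[n]}_{t+s})$, so the stopping sets are nested and $\tau_n^*\uparrow\tau_\infty$ for some $\mathcal T_t$-stopping time $\tau_\infty\in[0,T-t]$ (an increasing limit of stopping times). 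Then, pointwise, $\int_0^{\tau_n^*}e^{-rs}\partial_yf(X^{[n]}_{t+s},y)\ud s\ge\int_0^{\tau_n^*}e^{-rs}\partial_yf(\widetilde X_{t+s},y)\ud s$ because $X^{[n]}\ge\widetilde X$, and the latter increases with $n$ to $\int_0^{\tau_\infty}e^{-rs}\partial_yf(\widetilde X_{t+s},y)\ud s$ by monotone convergence (non-negative integrand, enlarging domain), while $c_0e^{-r\tau_n^*}\to c_0e^{-r\tau_\infty}$ boundedly; taking expectations yields $u(t,x,y)=\lim_n U_n(t,x,y;\tau_n^*)\ge U(t,x,y;\tau_\infty)\ge\tilde u(t,x,y)$. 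Together with the first step this proves \eqref{eq:uOS}.

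For the boundary, \eqref{eq:lims} and \eqref{def:cn} give $c(t,x)=\inf_n c_n(t,x)$ with $c_n(t,x)=\inf\{y:u_n(t,x,y)<c_0\}$. If $y>c(t,x)$ then $c_N(t,x)<y$ for some $N$, whence $u(t,x,y)\le u_N(t,x,y)<c_0$; if $y<c(t,x)$ then $y<c_n(t,x)$, i.e.\ $u_n(t,x,y)=c_0$, for every $n$, whence $u(t,x,y)=c_0$. This yields $c(t,x)=\inf\{y:u(t,x,y)<c_0\}$ with matching $\inf\varnothing=1$ conventions; since $u$ is upper semi-continuous by \eqref{eq:usc-lim}, $\cC=\{u<c_0\}$ is open, and with monotonicity of $u(t,x,\cdot)$ this gives $\cC=\{(t,x,y):y>c(t,x)\}$, so $c$ is indeed its boundary. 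Rewriting $c_0e^{-r\tau}=c_0-rc_0\int_0^\tau e^{-rs}\ud s$ in \eqref{eq:uOS} produces the representation $u(t,x,y)=c_0+\inf_{\tau\in\mathcal T_t}\E_{t,x}\big[\int_0^\tau e^{-rs}(\partial_yf(\widetilde X_{t+s},y)-rc_0)\ud s\big]$, the analogue of \eqref{eq:valueFuncRepresetation2}, after which the two arguments in the proof of Proposition~\ref{prop:OSboundary} transfer verbatim with $\widetilde X$ (a strong solution of an SDE with Lipschitz coefficients, by Lemma~\ref{lem:convXY}) in place of $X^{[n]}$: the suboptimal time $\tau_{\cH}=\inf\{s:(\widetilde X_{t+s},y)\notin\cH\}\wedge(T-t)$ shows $[0,T)\times\cH\subset\cC$, so $\cC\ne\varnothing$ by \eqref{def-H} and \eqref{ass:partial-y-f}, whereas assuming $\cS=\varnothing$ forces $u(t,x,y)=c_0+\E_{t,x}[\int_0^{T-t}e^{-rs}(\partial_yf(\widetilde X_{t+s},y)-rc_0)\ud s]$, which upon letting $x\to\infty$ contradicts $u\le c_0$ by \eqref{ass:partial-y-f}; hence $\cS\ne\varnothing$.

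I expect the lower bound to be the only genuine obstacle. The difficulty is to exhibit an admissible stopping time for the limit problem as a limit of the optimal times $\tau_n^*$ and to interchange that limit with the expectation in the face of competing monotonicities — the integrand $\partial_yf(X^{[n]}_{t+s},y)$ decreases in $n$ while the random horizon $\tau_n^*$ increases. Establishing that $\tau_n^*$ is monotone (through nestedness of the stopping regions, which in turn rests on the joint monotonicity of $c_n$ and $X^{[n]}$ from Propositions~\ref{teo:monotonicity}--\ref{prop:OSboundary}) is precisely what lets the exchange of limits be carried out by plain monotone and bounded convergence rather than a more delicate compactness argument.
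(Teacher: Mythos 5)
Your proof is correct, but the balance of effort is inverted relative to the paper, and in one place you overlooked a one-line argument.

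For $u\le\tilde u$ you do essentially what the paper does: for a fixed (or $\eps$-optimal) $\tau$, $u\le u_n\le U_n(t,x,y;\tau)$, then pass to the limit $U_n\to U$ by dominated convergence (Assumption~\ref{ass:int}, continuity of $\partial_yf$, continuity of the flow); this is fine.

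For $u\ge\tilde u$, which you single out as the crux, the paper has a far shorter argument that you missed. Since $X^{[n]}_{t+s}\ge\widetilde X_{t+s}$ $\P$-a.s.\ for every $n$ (Proposition~\ref{teo:monotonicity} and \eqref{eq:lim-Xi}) and $x\mapsto\partial_yf(x,y)$ is non-decreasing (Assumption~\ref{ass:f}-(i)), one has $U_n(t,x,y;\tau)\ge U(t,x,y;\tau)$ \emph{for every} $\tau\in\T_t$, hence $u_n=\inf_\tau U_n\ge\inf_\tau U=\tilde u$ for every $n$, and passing to the limit gives $u\ge\tilde u$ directly. No need to extract a limiting stopping time at all. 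Your route --- establishing nestedness of the stopping regions to get $\tau_n^*\uparrow\tau_\infty$, then splitting the functional and applying monotone/bounded convergence --- is valid (the monotonicity $\tau_n^*\le\tau_{n+1}^*$ does follow from $c_{n+1}\le c_n$, $X^{[n+1]}\le X^{[n]}$ and $x\mapsto c_n(t,x)$ non-decreasing), but it expends effort that the problem does not require, and it essentially re-derives along the way the content of the paper's separate Lemma~\ref{lem:tau-n} on convergence of optimal stopping times. The asymmetry is worth internalising: the lower bound is the trivial direction precisely because the monotonicity of the scheme gives a \emph{uniform} domination $U_n\ge U$ over all stopping rules, which immediately passes through the infimum; only the upper bound needs a limiting argument.

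The identification of $c$ and the non-emptiness of $\cC$, $\cS$ are handled the same way as in the paper (pointwise convergence $c_n\downarrow c$, plus the transfer of Proposition~\ref{prop:OSboundary}'s arguments via the analogue of \eqref{eq:valueFuncRepresetation2}), so no issues there.
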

\begin{proof}
Since $X^{[n]}\ge \widetilde X$ for all $n\ge 0$ and $x\mapsto\partial_y f(x,y)$ is non-decreasing, for any $\tau\in\T_t$ we have
$U_n(t,x,y;\tau)\ge U(t,x,y;\tau)$
and therefore 
\[
u(t,x,y)=\lim_{n\to\infty}\inf_{\tau\in\T_t}U_{n}(t,x,y;\tau)\ge\inf_{\tau\in\T_t}U(t,x,y;\tau).
\]
Now, given $\eps>0$ we can find a stopping time $\tau_\eps\in\T_t$ such that
\[
\inf_{\tau\in\T_t}U(t,x,y;\tau)+\eps\ge U(t,x,y;\tau_\eps). 
\]
Moreover, by dominated convergence (Assumption \ref{ass:int}) and continuity of $\partial_y f$ we have
\[
U(t,x,y;\tau_\eps)=\E_{t,x}\left[\int_0^{\tau_\eps}e^{-r s}\lim_{n\to\infty}\partial_y f(X^{[n]}_{t+s},y)\ud s + c_0 e^{-r \tau_{\varepsilon}}\right]= \lim_{n\to\infty} U_n(t,x,y;\tau_\eps).
\]
So combining the above we get
\[
\inf_{\tau\in\T_t}U(t,x,y;\tau)+\eps\ge\lim_{n\to\infty} U_n(t,x,y;\tau_\eps)\ge \lim_{n\to\infty}u_n(t,x,y)=u(t,x,y)
\]
and since $\eps>0$ was arbitrary we conclude 
\[
u(t,x,y)\le \inf_{\tau\in\T_t}U(t,x,y;\tau)
\]
as needed for the first claim. Notice that the expression in \eqref{eq:uOS} is readily verified also if $\partial_y f(x,0)=\infty$, with $u_n(t,x,0)=u(t,x,0)=c_0$.

Let us next prove that $c$ coincides with the optimal stopping boundary for the limit problem. Since $u\le u_n$ for all $n\ge 0$ we have 
\[
c_n(t,x)=\inf\{y\in[0,1]: u_n(t,x,y)<c_0\}\ge \inf\{y\in[0,1]: u(t,x,y)<c_0\} 
\]
so that 
\[
c(t,x)\ge \inf\{y\in[0,1]: u(t,x,y)<c_0\}.
\]
For the reverse inequality, let us fix $(t,x)\in[0,T]\times \R$, take $\eta\in[0,1]$ such that 
\begin{align}\label{eta}
\eta > \inf\{y\in[0,1]: u(t,x,y)<c_0\}.
\end{align}
Then there must be $\delta>0$ such that $u(t,x,\eta)\le c_0-\delta$. By pointwise convergence, there exists $n_\delta\ge 0$ such that $u_n(t,x,\eta)\le u(t,x,\eta)+\delta/2$ for all $n\ge n_\delta$ and therefore, $u_n(t,x,\eta)\le c_0-\delta/2$ for all $n\ge n_\delta$. Hence, $\eta>c_n(t,x)$ for all $n\ge n_\delta$ and $\eta>c(t,x)$ too. The result holds for any $\eta\in[0,1]$ such that \eqref{eta} is true and therefore 
\[
c(t,x)\le  \inf\{y\in[0,1]: u(t,x,y)<c_0\}.
\]
Since $y\mapsto u(t,x,y)$ is decreasing it is clear that $c$ is the boundary of the set $\cC$ defined in \eqref{cC}. 

The exact same arguments as in the proof of Proposition \ref{prop:OSboundary} apply to the stopping problem with value $u$ and allow us to show that $\cC\neq\varnothing$ and $\cS\neq\varnothing$ thanks to \eqref{ass:partial-y-f} in Assumption \ref{ass:f}.
\end{proof}

Thanks to the probabilistic representation of $u$ we can use the same arguments as in the proof of Proposition \ref{prop:OSvalueFunc} to show that $u$ indeed fulfils the same properties as $u_n$.
\begin{corollary}\label{cor:reg-u}
Under Assumptions \ref{ass:SDE}-\ref{ass:int} the function $u$ satisfies (i)--(iv) in Proposition \ref{prop:OSvalueFunc}.
\end{corollary}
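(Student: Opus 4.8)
The plan is to transcribe, essentially line by line, the proof of Proposition~\ref{prop:OSvalueFunc}, now based on the probabilistic representation \eqref{eq:uOS} of $u$ established in Lemma~\ref{lem:limitOS} and on the properties of the limiting process $\widetilde X$ from Lemma~\ref{lem:convXY}. Parts (i), (ii) and (iii) require no new work: since $0\le u_n\le c_0$ for every $n\ge 0$ and $u_n\downarrow u$ pointwise on $[0,T]\times\Sigma$, one gets $0\le u\le c_0$, while the monotonicity of each $u_n$ in $x$, $y$ and $t$ (Proposition~\ref{prop:OSvalueFunc}--(ii),(iii)) is inherited by the pointwise limit, as already recorded after \eqref{eq:lims}. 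Hence the only genuine task is the joint continuity (iv); equivalently, since $u$ is already upper semi-continuous by \eqref{eq:usc-lim}, one must prove that it is lower semi-continuous.

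For (iv) I would argue exactly as in Proposition~\ref{prop:OSvalueFunc}--(iv). First, by the criterion that a separately continuous and monotone function is jointly continuous \cite{kruse} together with the monotonicity just recalled, it suffices to prove continuity of $u$ in each of $t$, $x$, $y$ separately on $[0,T]\times\Sigma'$. The three ingredients used there for $X^{[n]}$ carry over to $\widetilde X$: (a) the comparison $\widetilde X^{t,x_2}_{t+s}\ge\widetilde X^{t,x_1}_{t+s}$ for $x_2\ge x_1$, which follows from pathwise uniqueness for the limiting SDE \eqref{limit-SDE} proved in Lemma~\ref{lem:convXY}; (b) continuity of the stochastic flow $(t,x)\mapsto\widetilde X^{t,x}_{t+\cdot}$, again a consequence of the Lipschitz continuity of $a$ and $\sigma$ (Assumption~\ref{ass:SDE}) via the usual moment estimates and Kolmogorov--Chentsov; (c) the integrability bound of Assumption~\ref{ass:int}, which applies to $\widetilde X$ upon taking $m=\widetilde m$ there. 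With (a)--(c) in hand the estimates of Proposition~\ref{prop:OSvalueFunc}--(iv) go through verbatim: one plugs the (suitably time-truncated) optimal stopping time of a given point as an admissible competitor at a neighbouring point, uses the monotonicity of $u$ to squeeze the difference of the two values between $0$ and a quantity of the form $\E_{t,x}\big[\int_0^{T-t}e^{-rs}\,|\partial_y f(\widetilde X''_{s},y)-\partial_y f(\widetilde X'_{s},y)|\,\ud s\big]$, where $\widetilde X',\widetilde X''$ are the flows attached to the two points being compared (with, in the time case, an extra boundary term over $[T-t_k,T-t]$ that vanishes), and then lets the points coalesce, invoking dominated convergence together with continuity of $\partial_y f$ and of the flow.

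The one point I would single out as mildly delicate is the continuity in the \emph{initial time} of the flow $t\mapsto\widetilde X^{t,x}_{t+\cdot}$, because the drift is the composition of $m\mapsto a(x,m)$ with the merely right-continuous (indeed upper semi-continuous) function $\widetilde m$. This is, however, exactly the situation already present in Proposition~\ref{prop:OSvalueFunc}, where $m^{[n-1]}$ is likewise only right-continuous: since $\widetilde m$ is monotone its discontinuity set is countable, hence Lebesgue-null, so $\widetilde m(t_k+u)\to\widetilde m(t+u)$ for a.e.\ $u$ whenever $t_k\to t$, and a Gronwall--Doob argument identical to the one in the proof of Lemma~\ref{lem:convXY} yields $\E_{t,x}\big[\sup_{0\le s\le T-t_k}|\widetilde X^{t_k,x}_{t_k+s}-\widetilde X^{t,x}_{t+s}|^2\big]\to 0$, which is more than enough for the dominated-convergence steps (after passing, if necessary, to an a.s.-convergent subsequence). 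Hence no genuinely new difficulty arises and the remainder is a transcription of the arguments already given for $u_n$.
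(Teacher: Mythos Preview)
Your proposal is correct and matches the paper's one-line justification that the same arguments as in Proposition~\ref{prop:OSvalueFunc} apply thanks to the probabilistic representation \eqref{eq:uOS}. Your shortcut of obtaining (i)--(iii) directly from the pointwise limit $u_n\downarrow u$ is a valid simplification, while your treatment of (iv)---including the care taken over time-continuity of the flow when $\widetilde m$ is merely right-continuous---is exactly what the paper intends.
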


In what follows we let
\begin{align}\label{eq:tau*}
\tau_*(t,x,y)=\inf\{s\in[0,T-t]: u(t+s,\widetilde X^{t,x}_{t+s},y)=c_0\},
\end{align}
which is optimal for the limit problem with value $u(t,x,y)$.
Continuity of the value function allows a simple proof of convergence of optimal stopping times. The result is of independent interest and might be used for numerical approximation of the optimal stopping rule $\tau_*$. We state the result here but put its proof in the appendix as it will not be needed in the rest of the paper.
\begin{lemma}\label{lem:tau-n}
For all $(t,x,y)\in[0,T]\times\Sigma$ we have $\tau^{[n]}_*\uparrow \tau_*$, $\P_{t,x,y}$-a.s., as $n\to\infty$.
\end{lemma}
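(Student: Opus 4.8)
The idea is to read $\tau^{[n]}_*$ and $\tau_*$ as first-entry times of the continuous processes $(t+s,X^{[n]}_{t+s})$ and $(t+s,\widetilde X_{t+s})$ into the closed stopping regions $\cS^{[n]}$ and $\cS$ characterised in Proposition \ref{prop:OSboundary} and Lemma \ref{lem:limitOS}, and then to combine the three monotone convergences already at our disposal --- $c_n\downarrow c$ (Proposition \ref{teo:monotonicity} and \eqref{eq:lims}), $X^{[n]}\downarrow\widetilde X$ (Proposition \ref{teo:monotonicity} and \eqref{eq:lim-Xi}) and $u_n\downarrow u$ --- with the continuity of $u_n$ and $u$ on $[0,T]\times\Sigma'$ (Proposition \ref{prop:OSvalueFunc}(iv) and Corollary \ref{cor:reg-u}). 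First I would dispose of the degenerate case $y=0$ with $\partial_y f(x,0)=\infty$, where by convention $\tau^{[n]}_*=\tau_*=0$ and there is nothing to prove, and work from then on with $y\in(0,1]$, so that all points at play lie in $[0,T]\times\Sigma'$. Since $c_n$ and $c$ are upper semi-continuous while $X^{[n]}$, $\widetilde X$ have continuous paths, the maps $s\mapsto c_n(t+s,X^{[n]}_{t+s})$ and $s\mapsto c(t+s,\widetilde X_{t+s})$ are upper semi-continuous; moreover $\T_T=\{0\}$ forces $u_n(T,\cdot,\cdot)=u(T,\cdot,\cdot)=c_0$, i.e.\ $c_n(T,\cdot)=c(T,\cdot)=1$, so the corresponding sublevel sets are non-empty. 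Hence the infima in \eqref{tau*n} and \eqref{eq:tau*} are attained and, $\P_{t,x,y}$-a.s.,
\[
y\le c_n\big(t+\tau^{[n]}_*,X^{[n]}_{t+\tau^{[n]}_*}\big),\qquad y\le c\big(t+\tau_*,\widetilde X_{t+\tau_*}\big).
\]

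Monotonicity of the sequence comes essentially for free: for $s<\tau^{[n]}_*$ one has $y>c_n(t+s,X^{[n]}_{t+s})\ge c_{n+1}(t+s,X^{[n]}_{t+s})\ge c_{n+1}(t+s,X^{[n+1]}_{t+s})$, using $c_{n+1}\le c_n$, $X^{[n+1]}\le X^{[n]}$ and $x\mapsto c_{n+1}(t+s,x)$ non-decreasing; hence $s<\tau^{[n+1]}_*$, so $\tau^{[n]}_*\le\tau^{[n+1]}_*$ and $\tau^{[n]}_*\uparrow\bar\tau$ for some $\bar\tau\le T-t$. The inequality $\bar\tau\le\tau_*$ is equally quick: evaluating at $s=\tau_*$ and using $X^{[n]}_{t+\tau_*}\ge\widetilde X_{t+\tau_*}$, monotonicity of $c$ in $x$, and $c_n\ge c$, we get $c_n(t+\tau_*,X^{[n]}_{t+\tau_*})\ge c(t+\tau_*,\widetilde X_{t+\tau_*})\ge y$, i.e.\ $(t+\tau_*,X^{[n]}_{t+\tau_*},y)\in\cS^{[n]}$, whence $\tau^{[n]}_*\le\tau_*$ for every $n$ and so $\bar\tau\le\tau_*$.

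The reverse inequality $\bar\tau\ge\tau_*$ is the heart of the matter. Here I would first upgrade the pointwise monotone convergence $X^{[n]}_{t+\cdot}\downarrow\widetilde X_{t+\cdot}$ on the compact interval $[0,T-t]$ to \emph{uniform} convergence by Dini's theorem (both sides being continuous, $\widetilde X$ solving the limit SDE by Lemma \ref{lem:convXY}); together with path-continuity of $\widetilde X$ and $\tau^{[n]}_*\to\bar\tau$ this yields $X^{[n]}_{t+\tau^{[n]}_*}\to\widetilde X_{t+\bar\tau}$, $\P_{t,x,y}$-a.s. Next I freeze an index $m$: since $(u_k)_k$ is non-increasing, for all $n\ge m$ one has $c_0=u_n(t+\tau^{[n]}_*,X^{[n]}_{t+\tau^{[n]}_*},y)\le u_m(t+\tau^{[n]}_*,X^{[n]}_{t+\tau^{[n]}_*},y)\le c_0$, so $u_m(t+\tau^{[n]}_*,X^{[n]}_{t+\tau^{[n]}_*},y)=c_0$; letting $n\to\infty$ and invoking continuity of $u_m$ on $[0,T]\times\Sigma'$ gives $u_m(t+\bar\tau,\widetilde X_{t+\bar\tau},y)=c_0$. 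Finally, sending $m\to\infty$ yields $u(t+\bar\tau,\widetilde X_{t+\bar\tau},y)=c_0$, i.e.\ $(t+\bar\tau,\widetilde X_{t+\bar\tau},y)\in\cS$; as $\tau_*$ is the first entry time of $(t+\cdot,\widetilde X_{t+\cdot})$ into $\cS$, this forces $\tau_*\le\bar\tau$, and combining with the previous step $\bar\tau=\tau_*$, $\P_{t,x,y}$-a.s.

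I expect the double-index passage in the last paragraph to be the delicate point: one cannot substitute directly into $u$ because $u$ is only obtained as a pointwise limit, and the quantity $u_n(t+\tau^{[n]}_*,X^{[n]}_{t+\tau^{[n]}_*},y)$ carries the running index both inside the function and inside its argument; the resolution is to freeze the outer index at $m$, pass $n\to\infty$ using continuity of the \emph{fixed} function $u_m$, and only then send $m\to\infty$. A secondary technical point is that one genuinely needs uniform (not merely subsequential $L^2$) convergence of $X^{[n]}$ to $\widetilde X$, which is why I would invoke Dini's theorem rather than the estimate used in the proof of Lemma \ref{lem:convXY}.
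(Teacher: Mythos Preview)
Your proof is correct. The monotonicity $\tau^{[n]}_*\le\tau^{[n+1]}_*\le\tau_*$ is handled exactly as in the paper, and for the reverse inequality your ``freeze the index $m$'' device is a valid way to pass the running index through both the function and its argument.

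The paper organises the reverse inequality a little differently, and it is worth noting the contrast. Rather than applying Dini to the processes $s\mapsto X^{[n]}_{t+s}$ and then using the freeze-index trick, the paper applies Dini directly to the composite maps $s\mapsto u_n(t+s,X^{[n]}_{t+s},y)$: it first checks that this sequence of continuous functions of $s$ is non-increasing in $n$ (via $u_n\ge u_{n+1}$ and $x\mapsto u_{n+1}(t,x,y)$ non-decreasing together with $X^{[n]}\ge X^{[n+1]}$) with continuous limit $u(t+s,\widetilde X_{t+s},y)$, so the convergence is uniform on any $[0,\delta]$ with $\delta<\tau_*$. From $\sup_{s\le\delta}(u(t+s,\widetilde X_{t+s},y)-c_0)\le-\eps$ one then gets $\tau^{[n]}_*>\delta$ for all large $n$, hence $\liminf_n\tau^{[n]}_*\ge\tau_*$. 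In short, the paper shows ``before $\tau_*$ the process stays in $\cC^{[n]}$'', whereas you show ``at the limit $\bar\tau$ the process lands in $\cS$''; both routes hinge on the same monotonicity and Dini ingredients, just assembled from opposite ends.
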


Since the dynamics of $(\widetilde X_t)_{t\in[0,T]}$ is fully specified and we have obtained a solution of the optimal stopping problem with value $u$ (Lemma \ref{lem:limitOS}), we can state a result similar to Proposition \ref{th:theoremConnection}.
\begin{proposition}\label{prop:ex-uniq}
Let Assumptions \ref{ass:SDE}--\ref{ass:int} hold and let $\widetilde X$ be specified as in Lemma \ref{lem:convXY}. Define
\begin{equation}\label{eq:hat-v}
\begin{split}
\hat v(t,x,y)&:=  \sup_{\xi\in\Xi_{t,x}(y)}\hat J(t,x,y;\xi)\qquad\text{with}\\
\hat J(t,x,y;\xi)&:= \E_{t,x}\left[\int_0^{T-t}e^{-rs}f(\widetilde X_{t+s},y + \xi_{t+s})\ud s-\int_{[0,T-t]}e^{-rs}c_0 \ud\xi_{t+s}\right].
\end{split}
\end{equation}
Then, for any $(t,x,y)\in[0,T]\times\Sigma$ we have
\begin{align}
\hat v(t,x,y)=\Phi(t,x)-\!\int_y^1\!u(t,x,z)\ud z\quad\text{with}\quad\Phi(t,x):=\E_{t,x}\left[\int_0^{T-t}\!\!e^{-rs}f(\widetilde X_{t+s},1)\ud s\right].
\end{align}
Moreover, 
\begin{align}\label{eq:xi*}
\xi^{*}_{t+s}:=\sup_{0\le u\le s}\left(c(t+u,\widetilde X_{t+u})-y\right)^+\quad\text{with}\quad \xi^{*}_{u}=0,\:u\in[0,t),
\end{align}
is the {\em unique} optimal control in \eqref{eq:hat-v}, up to indistinguishability.
\end{proposition}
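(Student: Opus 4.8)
The plan is to prove the statement in two stages: first the representation formula for $\hat v$ together with optimality of the candidate control $\xi^*$, and then the uniqueness of $\xi^*$. For the first stage I would note that problem \eqref{eq:hat-v} is structurally identical to the singular control problem $\textbf{SC}^{[n]}$ of Section \ref{sec:nSC}, with $X^{[n]}$ replaced by the limit diffusion $\widetilde X$, the stopping value $u_n$ by $u$, and the boundary $c_n$ by $c$. All the ingredients needed to re-run the proof of Proposition \ref{th:theoremConnection} (equivalently, the proof of Theorem 5.1 in \cite{de2017optimal}) are already available: the dynamics of $\widetilde X$ is fully specified and has a unique strong solution with continuous paths (Lemma \ref{lem:convXY}); by Lemma \ref{lem:limitOS} the function $u$ is the value of the optimal stopping problem \eqref{eq:uOS}, the set $\cC$ in \eqref{cC} is the region above the graph of the upper semi-continuous boundary $c$, and both $\cC$ and $\cS$ are non-empty; and by Corollary \ref{cor:reg-u} the function $u$ enjoys the same monotonicity and continuity as $u_n$. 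Finally, the proof of Lemma \ref{lem:SK} applies verbatim with $(X^{[n]},c_n)$ replaced by $(\widetilde X,c)$ — it uses only upper semi-continuity of the boundary and path-continuity of the diffusion — so $\xi^*$ in \eqref{eq:xi*} is admissible and realises the Skorokhod reflection of $(t+s,\widetilde X_{t+s},Y^*_{t+s})$ inside $\overline\cC$, together with the minimality (flat-off) condition in the spirit of \eqref{eq:minSK}.

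With these facts in hand the representation is obtained exactly as for $\textbf{SC}^{[n]}$. For the upper bound one writes $f(\widetilde X_{t+s},y+\xi_{t+s})=f(\widetilde X_{t+s},1)-\int_{y+\xi_{t+s}}^1\partial_y f(\widetilde X_{t+s},z)\ud z$, applies Fubini to reorder the $z$-integration against the $\ud s$- and $\ud\xi$-integrations, and recognises that for each fixed $z\in[y,1]$ the resulting expectation dominates $u(t,x,z)$, the first hitting time of level $z$ by $Y^\xi$ being an admissible stopping time in \eqref{eq:uOS}; summing over $z$ gives $\hat v(t,x,y)\le \Phi(t,x)-\int_y^1 u(t,x,z)\ud z$. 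For the reverse inequality one checks that under $\xi^*$ the hitting time of level $z$ coincides with the optimal stopping time $\tau_*(t,x,z)$ of \eqref{eq:tau*}, so that the associated supermartingale becomes a martingale on the relevant time interval and the flat-off condition makes the proportional cost term match exactly; hence $\hat J(t,x,y;\xi^*)=\Phi(t,x)-\int_y^1 u(t,x,z)\ud z=\hat v(t,x,y)$.

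For uniqueness I would exploit the strict concavity of $y\mapsto f(x,y)$ (Assumption \ref{ass:f}-(ii)). The functional $\xi\mapsto \hat J(t,x,y;\xi)$ is concave on the convex set $\Xi_{t,x}(y)$, because the cost term $-\int_{[0,T-t]}e^{-rs}c_0\ud\xi_{t+s}$ is linear in $\xi$ and $\xi\mapsto \E_{t,x}[\int_0^{T-t}e^{-rs}f(\widetilde X_{t+s},y+\xi_{t+s})\ud s]$ is concave since $f(x,\cdot)$ is concave and $\xi\mapsto y+\xi_{t+s}$ is affine. If $\xi^1,\xi^2\in\Xi_{t,x}(y)$ are both optimal, then $\xi^{1/2}:=\tfrac12(\xi^1+\xi^2)\in\Xi_{t,x}(y)$ (convexity of $[0,1]$ and of the class of adapted non-decreasing right-continuous processes) and
\[
\hat v(t,x,y)\ge\hat J(t,x,y;\xi^{1/2})\ge \tfrac12\hat J(t,x,y;\xi^1)+\tfrac12\hat J(t,x,y;\xi^2)=\hat v(t,x,y),
\]
so all inequalities are equalities; since the cost term is linear this forces
\[
\E_{t,x}\Big[\int_0^{T-t}e^{-rs}\Big(f\big(\widetilde X_{t+s},y+\tfrac{\xi^1_{t+s}+\xi^2_{t+s}}{2}\big)-\tfrac12 f(\widetilde X_{t+s},y+\xi^1_{t+s})-\tfrac12 f(\widetilde X_{t+s},y+\xi^2_{t+s})\Big)\ud s\Big]=0.
\]
The integrand is non-negative by concavity, hence it vanishes $\ud s\otimes\P_{t,x,y}$-a.e., and strict concavity of $f(x,\cdot)$ then yields $\xi^1_{t+s}=\xi^2_{t+s}$ for Lebesgue-a.e.\ $s\in[0,T-t]$, $\P_{t,x,y}$-a.s.; right-continuity of $\xi^1$ and $\xi^2$ upgrades this to indistinguishability. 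Since $\xi^*$ is optimal by the first stage, it is the unique optimal control.

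The steps requiring the most care are in the first stage and are not new: making the Fubini interchange rigorous when $\xi$ carries atoms (in particular at $s=0$ and $s=T-t$) and verifying that the first hitting time of each level $z$ is a legitimate element of $\T_t$; both are handled precisely as in \cite{de2017optimal} and in the proof of Proposition \ref{th:theoremConnection}. The genuinely new content is the uniqueness argument, and there the only subtle point is passing from Lebesgue-almost-everywhere equality in time to indistinguishability, which right-continuity takes care of.
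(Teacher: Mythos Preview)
Your proposal is correct and follows the paper's approach: the representation formula and optimality of $\xi^*$ are obtained by repeating the proof of Proposition~\ref{th:theoremConnection} verbatim (exactly as the paper does), and uniqueness is derived from strict concavity of $f(x,\cdot)$ together with convexity of $\Xi_{t,x}(y)$. The only cosmetic difference is that the paper phrases the uniqueness step by contradiction---fixing an event and a time interval on which two putative optimisers differ and showing that their convex combination strictly improves---whereas you argue directly that equality in the concavity inequality forces the integrand to vanish $\ud s\otimes \P$-a.e.; the two presentations are equivalent.
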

\begin{remark}
Before passing to the proof we would like to emphasise that at this stage we are not claiming that $(\widetilde{m},\xi^*)$ is a solution of the MFG because $\widetilde{m}$ is specified in \eqref{eq:m-lim} and a priori the consistency condition (Definition \ref{def:MFGsol}-(ii)) may not hold. Hence, a priori $\hat v$ is not the function $v$ defined in \eqref{eq:P0}. Of course uniqueness of the optimal control also holds in Proposition \ref{th:theoremConnection} albeit not stated there.
\end{remark}
\begin{proof}
We only need to prove uniqueness of the optimal control as all remaining claims are obtained by repeating verbatim the proof of Proposition \ref{th:theoremConnection}. As usual, uniqueness follows by strict concavity of the map $y\mapsto f(x,y)$, convexity of the set $\Xi_{t,x}(y)$ of admissible controls and an argument by contradiction. 

For notational simplicity and with no loss of generality we take $t=0$. Assume that $\eta\in \Xi_{0,x}(y)$ is another optimal control. Since, $\eta$ and $\xi^*$ are both right-continuous they are indistinguishable as soon as they are modifications, i.e. if $\P_{x,y}(\xi^*_{s}=\eta_{s})=1$ for all $s\in[0,T]$. Arguing by contradiction assume there exists $s_0\in[0,T)$ such that $3 p:=\P_{x,y}(\xi^*_{s_0}\neq \eta_{s_0})>0$. Then, there also exists $\eps>0$ such that $\P_{x,y}(|\xi^*_{s_0}- \eta_{s_0}|\ge \eps)\ge 2p$ and, by right-continuity of the paths, there exists $s_1>s_0$ such that 
\[
\P_{x,y}\left(\inf_{s_0\le u\le s_1}|\xi^*_{u}- \eta_{u}|\ge \eps\right)\ge p>0.
\]
Let us denote 
\begin{align}\label{A0}
A_{0}:=\left\{\omega:\inf_{s_0\le u\le s_1}|\xi^*_{u}(\omega)- \eta_{u}(\omega)|\ge \eps\right\}.
\end{align}

For any $\lambda\in(0,1)$, since $\eta$ and $\xi^*$ are both optimal, we have 
\begin{align}
\hat v(0,x,y)=&\lambda \hat J(0,x,y;\eta)+(1-\lambda)\hat J(0,x,y;\xi^*)\\
=&\E_{x}\left[\int_0^{T}e^{-rs}\left[\lambda f(\widetilde X_{s},y + \eta_{s})+
(1-\lambda)f(\widetilde X_{s},y + \xi^*_{s})\right]\ud s\right]\\
&-\E_{x}\left[\int_{[0,T]}e^{-rs}c_0(\lambda\ud \eta_s+(1-\lambda)\ud\xi^*_s)\right].
\end{align}
Now, letting $\zeta^\lambda:=\lambda\eta+(1-\lambda)\xi^*$ it is immediate to check that $\zeta^\lambda\in\Xi_{0,x}(y)$ 
and, by strict concavity of $y\mapsto f(x,y)$ (and joint continuity of $f$), we have
\begin{align*}
&\mathds{1}_{A_0}\left[\lambda f(\widetilde X_{s},y + \eta_{s})+(1-\lambda)f(\widetilde X_{s},y + \xi^*_{s})\right]<\mathds{1}_{A_0} f( \widetilde X_{s},y + \zeta^\lambda_{s}),\quad\text{for $s\in[s_0,s_1]$}
\end{align*}
with $\mathds 1_{A_0}$ the indicator of the event in \eqref{A0}. For all times $s\in[0,T]$ and on $\Omega\setminus A_0$ the same inequality holds with `$\le$' by concavity. Since $\P_{0,x}(A_0)>0$ and $s_0<s_1$, the strict inequality holds for the expected values as well. Hence we reach the contradiction
\[
\hat v(0,x,y)<\hat J(0,x,y;\zeta^\lambda),
\]
which concludes the proof.
\end{proof}

\subsection{Solution of the MFG}

In this section we first show that $\widetilde \xi$ obtained in the previous section (see \eqref{eq:lim-Xi}) is optimal for the control problem in Proposition \ref{prop:ex-uniq} and then conclude that $(\widetilde m,\widetilde \xi)$ solves the MFG.
\begin{proposition}
Let Assumptions \ref{ass:SDE}--\ref{ass:int} hold, take $\widetilde \xi$ as in \eqref{eq:lim-Xi}, $\widetilde m$ as in \eqref{eq:m-lim} and $\widetilde X$ as in Lemma \ref{lem:convXY}. Then 
\[
\hat v(t,x,y)=\hat J(t,x,y;\widetilde \xi),\quad \text{for any $(t,x,y)\in[0,T]\times\Sigma$}
\]
and $\widetilde \xi$ is indistinguishable from $\xi^*$ as in \eqref{eq:xi*}.
\end{proposition}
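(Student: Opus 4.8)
The plan is to pass to the limit, as $n\to\infty$, in the chain of identities $v_n(t,x,y)=J_n(t,x,y;\xi^{[n]*})$ established in Proposition \ref{th:theoremConnection}, showing that the left-hand side converges to $\hat v(t,x,y)$ and the right-hand side to $\hat J(t,x,y;\widetilde\xi)$. This yields $\hat v(t,x,y)=\hat J(t,x,y;\widetilde\xi)$, i.e.\ optimality of $\widetilde\xi$; since $\widetilde\xi\in\Xi_{t,x}(y)$ by Lemma \ref{lem:convXY}, indistinguishability from $\xi^*$ is then immediate from the uniqueness statement in Proposition \ref{prop:ex-uniq}.

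First I would prove $v_n(t,x,y)\to\hat v(t,x,y)$. By Proposition \ref{th:theoremConnection} and Proposition \ref{prop:ex-uniq} we have $v_n=\Phi_n(t,x)-\int_y^1 u_n(t,x,z)\ud z$ and $\hat v=\Phi(t,x)-\int_y^1 u(t,x,z)\ud z$. Since $0\le u_n\le c_0$ and $u_n\downarrow u$ pointwise (Proposition \ref{teo:monotonicity} and \eqref{eq:lims}), dominated convergence gives $\int_y^1 u_n\ud z\to\int_y^1 u\ud z$. For $\Phi_n\to\Phi$ I would use $X^{[n]}_{t+s}\downarrow\widetilde X_{t+s}$ $\P_{t,x,y}$-a.s.\ (from \eqref{eq:lim-Xi} and Lemma \ref{lem:convXY}), continuity and monotonicity of $x\mapsto f(x,1)$, which give $0\le f(X^{[n]}_{t+s},1)\le f(X^{[0]}_{t+s},1)$ with $\E_{t,x}[\int_0^{T-t}e^{-rs}f(X^{[0]}_{t+s},1)\ud s]<\infty$ by Assumption \ref{ass:int}, so that dominated convergence applies.

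Next I would show $J_n(t,x,y;\xi^{[n]*})\to\hat J(t,x,y;\widetilde\xi)$. For the running part, $f(X^{[n]}_{t+s},y+\xi^{[n]*}_{t+s})\to f(\widetilde X_{t+s},y+\widetilde\xi_{t+s})$ for a.e.\ $(\omega,s)$ by joint continuity of $f$ and the pointwise convergences in \eqref{eq:lim-Xi}; since $0\le f(X^{[n]}_{t+s},y+\xi^{[n]*}_{t+s})\le f(X^{[0]}_{t+s},1)$ by Assumption \ref{ass:f}-(i) and $y+\xi^{[n]*}_{t+s}\le 1$, dominated convergence applies. The delicate term is the fuel cost $\E_{t,x,y}[\int_{[0,T-t]}e^{-rs}c_0\ud\xi^{[n]*}_{t+s}]$, since pointwise convergence of the bounded-variation paths $\xi^{[n]*}$ does not by itself give convergence of the associated Stieltjes integrals (masses may spread out or jumps vanish). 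I would get around this by integrating by parts, using $\xi^{[n]*}_{t-}=0$, to write
\[
\int_{[0,T-t]}e^{-rs}\ud\xi^{[n]*}_{t+s}=e^{-r(T-t)}\xi^{[n]*}_T+r\int_0^{T-t}e^{-rs}\xi^{[n]*}_{t+s}\ud s,
\]
an expression involving only pointwise values of $\xi^{[n]*}$. Since $\xi^{[n]*}\downarrow\widetilde\xi$ pointwise and all quantities are bounded by deterministic constants, monotone convergence first in $s$ and then under $\E_{t,x,y}$ gives convergence to $e^{-r(T-t)}\widetilde\xi_T+r\int_0^{T-t}e^{-rs}\widetilde\xi_{t+s}\ud s$, which by integration by parts (and $\widetilde\xi_{t-}=0$) equals $\E_{t,x,y}[\int_{[0,T-t]}e^{-rs}\ud\widetilde\xi_{t+s}]$. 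Combining the two parts gives $J_n(t,x,y;\xi^{[n]*})\to\hat J(t,x,y;\widetilde\xi)$.

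Putting everything together yields $\hat v(t,x,y)=\lim_n v_n(t,x,y)=\lim_n J_n(t,x,y;\xi^{[n]*})=\hat J(t,x,y;\widetilde\xi)$, so $\widetilde\xi$ is optimal in \eqref{eq:hat-v}; being admissible (Lemma \ref{lem:convXY}), it coincides with $\xi^*$ up to indistinguishability by Proposition \ref{prop:ex-uniq}. The main obstacle is precisely the convergence of the fuel-cost integrals, and the integration-by-parts reformulation — reducing everything to pointwise limits of the monotone sequence $\xi^{[n]*}$ — is the crux. An alternative route would be to identify $\widetilde\xi_{t+s}=\sup_{0\le u\le s}(c(t+u,\widetilde X_{t+u})-y)^+$ directly from the explicit formula for $\xi^{[n]*}$ in Lemma \ref{lem:SK}, passing to the limit in $c_n\downarrow c$ and $X^{[n]}\downarrow\widetilde X$; this requires Dini's theorem (to upgrade $X^{[n]}\to\widetilde X$ to uniform convergence on compacts) together with upper semicontinuity of the boundaries $c_n$ to control the supremum, and I would mention it as a self-contained way of obtaining the indistinguishability with $\xi^*$ without invoking uniqueness.
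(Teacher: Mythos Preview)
Your proposal is correct and follows essentially the same approach as the paper: pass to the limit in $v_n(t,x,y)=J_n(t,x,y;\xi^{[n]*})$ using the representation $v_n=\Phi_n-\int_y^1 u_n$ on the left, and the integration-by-parts trick for the fuel-cost term on the right, then invoke uniqueness from Proposition \ref{prop:ex-uniq}. Your justification of the dominated-convergence steps (via the uniform envelope $f(X^{[0]}_{t+s},1)$) is in fact more explicit than the paper's, and your remark on the alternative direct identification of $\widetilde\xi$ with $\xi^*$ is a useful aside.
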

\begin{proof}
We only need to prove optimality of $\widetilde \xi$ as the rest follows by uniqueness of the optimal control (Proposition \ref{prop:ex-uniq}). 

Recall the value function $v_n$ of $\textbf{SC}^{[n]}$ (see \eqref{SCn-1}--\eqref{SCn-2}) and its expression from Proposition \ref{th:theoremConnection}. Using dominated convergence we obtain
\begin{align}
&\lim_{n\to\infty}\left(\Phi_n(t,x)-\int_y^1 u_n(t,x,z)\ud z\right)\\
&=\E_{t,x}\left[\int_0^{T-t}e^{-rs}\lim_{n\to\infty}f(X^{[n]}_{t+s},1)\ud s\right]-\int_y^1 \lim_{n\to\infty}u_n(t,x,z)\ud z=\hat v(t,x,y),
\end{align}
where the final equality is due to \eqref{eq:lims}, \eqref{eq:lim-Xi} and Proposition \ref{prop:ex-uniq}. Therefore we have  
\[
\lim_{n\to\infty}v_n(t,x,y)=\hat v(t,x,y).
\] 
Since $v_n(t,x,y)=J_n(t,x,y;\xi^{[n]*})$, if we can show that
\[
\lim_{n\to\infty}J_n(t,x,y;\xi^{[n]*})=\hat J(t,x,y;\widetilde \xi),
\]
the proof is complete. The latter is not difficult, indeed by integration by parts and dominated convergence we have
\begin{align*}
&\lim_{n\to\infty}J_n(t,x,y;\xi^{[n]*})\\
&=\lim_{n\to\infty}\E_{t,x}\left[\int_0^{T-t}\!\!e^{-r s}f(X^{[n]}_{t+s},y+\xi^{[n]*}_{t+s})\ud s-c_0e^{-r(T-t)}\xi^{[n]*}_T-rc_0\int_0^{T-t}\!\!e^{-rs}\xi^{[n]*}_{t+s}\ud s\right]\\
&=\E_{t,x}\bigg[\int_0^{T-t}\!\!e^{-r s}\lim_{n\to\infty}f(X^{[n]}_{t+s},y+\xi^{[n]*}_{t+s})\ud s\\
&\qquad\qquad-c_0e^{-r(T-t)}\lim_{n\to\infty}\xi^{[n]*}_T-rc_0\int_0^{T-t}\!\!e^{-rs}\lim_{n\to\infty}\xi^{[n]*}_{t+s}\ud s\bigg]\\
&=\E_{t,x}\left[\int_0^{T-t}\!\!e^{-r s}f(\widetilde X_{t+s},y+\widetilde \xi_{t+s})\ud s-c_0e^{-r(T-t)}\widetilde \xi_T-rc_0\int_0^{T-t}\!\!e^{-rs}\widetilde \xi_{t+s}\ud s\right]\\
&=\hat J(t,x,y;\widetilde{\xi}\,),
\end{align*}
where the penultimate equality comes from \eqref{eq:lim-Xi} and the final one is obtained by undoing the integration by parts.
\end{proof}
By construction $\widetilde Y$ and $\widetilde m$ fulfill the consistency condition \eqref{eq:m-lim} hence we have a simple corollary.
\begin{corollary}
The pair $(\widetilde m,\widetilde \xi)$ is a solution of the MFG as in Definition \ref{def:solMFG}. Since $\widetilde\xi$ is indistinguishable from $\xi^*$ in \eqref{eq:xi*} then Theorem \ref{teo:existenceSolMFG} holds with 
\[
X^*=\widetilde X,\quad Y^*=Y_{0-}+\widetilde\xi=Y_{0-}+\xi^* \quad\text{and}\quad m^*=\widetilde{m}.
\] 
\end{corollary}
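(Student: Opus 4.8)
The plan is to check directly the two defining properties of Definition \ref{def:solMFG} for the pair $(\widetilde m,\widetilde \xi)$, using only results already established in this section. The mean-field property (ii) comes essentially for free: by Lemma \ref{lem:convXY}, $\widetilde X$ is the unique strong solution of \eqref{eq:dynamics_mfg_01} driven by $m^*=\widetilde m$, and with $\widetilde Y=Y_{0-}+\widetilde\xi$ identity \eqref{eq:m-lim} is exactly the consistency condition $\widetilde m(t)=\int_{\Sigma}\E_{x,y}[\widetilde Y_t]\,\nu(\ud x,\ud y)$ for all $t\in[0,T]$.

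For the optimality property (i) the first step is to recognise that the parametrised problem with value $\hat v$ of Proposition \ref{prop:ex-uniq} coincides with the MFG control problem \eqref{eq:P0} once the measure flow is frozen at $m=\widetilde m$: indeed $\widetilde X$ solves \eqref{limit-SDE}, which is \eqref{eq:dynamics_mfg_03} for $m=\widetilde m$, and $\hat J(t,x,y;\xi)$ is literally $J(t,x,y;\xi)$ of \eqref{eq:P0} for that flow, so $\hat v(t,x,y)=v(t,x,y)$ on $[0,T]\times\Sigma$. The preceding Proposition gives $\hat v(t,x,y)=\hat J(t,x,y;\widetilde\xi)$ with $\widetilde\xi$ indistinguishable from $\xi^*$ of \eqref{eq:xi*}, hence $\xi^*$ is optimal in \eqref{eq:P0} for every $(t,x,y)$.

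The second step lifts this to the ex-ante value via \eqref{Vnu}. One needs the $t=0$ version $\xi^*_s=\sup_{0\le u\le s}(c(u,\widetilde X_u)-y)^+$ to belong to $\Xi(Y_{0-})$ and to be jointly measurable in $(\omega,x,y)$ with $(x,y)\mapsto \hat J(0,x,y;\xi^*)$ measurable; this is immediate from the explicit formula, the measurability of $c=\lim_n c_n$, and continuity of the flow $x\mapsto\widetilde X$. Therefore $V^\nu=\int_\Sigma v(0,x,y)\,\nu(\ud x,\ud y)=\int_\Sigma \hat J(0,x,y;\xi^*)\,\nu(\ud x,\ud y)=J(\xi^*)$, which is precisely Definition \ref{def:solMFG}-(i). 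Moreover $\xi^*$ is in feedback form with non-anticipative map $\eta(t,w,z):=\sup_{0\le s\le t}\big(c(s,w(s))-z\big)^+$, so that $\xi^*_t=\eta(t,X^*,Y_{0-})$. Collecting everything and setting $X^*=\widetilde X$, $Y^*=Y_{0-}+\widetilde\xi=Y_{0-}+\xi^*$, $m^*=\widetilde m$, and recalling that $c$ is upper semi-continuous with $t\mapsto c(t,x)$, $x\mapsto c(t,x)$ non-decreasing by \eqref{eq:usc-lim} and the monotonicity inherited from the $c_n$ (Proposition \ref{prop:OSboundary}), yields Theorem \ref{teo:existenceSolMFG}.

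The only point needing care is the identification $\hat v\equiv v$ combined with the admissibility of $\xi^*$ as an element of $\Xi(Y_{0-})$ required to apply \eqref{Vnu}: pointwise optimality of $\xi^*$ is the easy part, but one must make sure that freezing the flow at the constructed limit $\widetilde m$ genuinely reproduces problem \eqref{eq:P0} and that the conditional optimizers assemble into a single admissible control; the remaining arguments are bookkeeping already carried out above.
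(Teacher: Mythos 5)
Your proposal is correct and follows essentially the same route the paper takes, which is why the paper presents the result as an immediate corollary: the consistency condition is exactly \eqref{eq:m-lim}, optimality of $\widetilde\xi$ (hence of the indistinguishable $\xi^*$) comes from the unnamed proposition immediately preceding the corollary after identifying $\hat v$ with $v$ at the frozen flow $\widetilde m$, and the ex-ante equality $V^\nu=J(\xi^*)$ is obtained through \eqref{Vnu}. Your added care about the identification $\hat v\equiv v$, about assembling the conditional optimizers into a single admissible element of $\Xi(Y_{0-})$, and about measurability in $(\omega,x,y)$ is the right bookkeeping that the paper leaves implicit.
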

As a byproduct of this result and of Proposition \ref{prop:ex-uniq} we also have that the classical connection between singular stochastic control and optimal stopping still holds in our specific mean-field game.

\begin{remark}
It is interesting to notice that, upon a close inspection, the existence of a solution for our MFG could have been derived following ideas as in \cite{dianetti2019submodular} based on submodularity of the game's structure and the use of Tarski's fixed point theorem. However, that approach does not reveal any information on the structure of the solution, which makes its implementation in the $N$-player (pre-limit) games a very difficult task. 
Thanks to our constructive method, we have obtained a more explicit form of a MFG's solution, which we use in the next section to also obtain approximate Nash equilibria in the $N$-player games for $N$ large enough (with a convergence rate).

A clearer parallel at the technical level seems in order. In our set-up the function $f$ is independent of the measure flow so that the submodularity is trivially satisfied. We also have an indirect dependence of the players' objectives on the measure flow via the state $X$ (similarly to Subsection 4.4 in \cite{dianetti2019submodular}). Indeed, we recall that the drift of $X$ depends on the average of the state $Y$ in a monotone fashion (Assumption \ref{ass:SDE}-(ii)). Denoting $Y_t ^{m,*} := Y_{0-} + \xi_t ^{m,*}$ the optimally controlled process $Y$ (where we emphasise its dependence on $m(t)$), it is not difficult to show as in Proposition \ref{teo:monotonicity} that $m \mapsto \mathbb E[Y_t ^{m,*}]$ is non-increasing with respect to the natural order $m' \ge m$. The latter is defined as $m'(t) \ge m(t)$ for all $t\in [0,T]$ over the set of all measurable functions $m:[0,T]\to [0,1]$. Such set is a complete lattice under that order, so Tarski's fixed point theorem applies leading to the existence of MFG solutions in our case.
\end{remark}


\section{Approximate Nash equilibria for the $N$-player game}\label{sec:approximation}

\subsection{The $N$-player game: setting and assumptions}\label{subsec:Nplayer}
Here we start with a formal description of the $N$-player game sketched in the introduction.

Let $\Pi:=(\Omega, \mathcal{F}, \bar{\mathbb{F}}  = (\mathcal{F}_t)_{t \geq 0}, \bar \P)$ be a filtered probability space satisfying the  usual conditions, supporting an infinite sequence of independent one-dimensional $\bar{\mathbb{F}}$-Brownian motions $(W^{i})_{i=1}^\infty$, as well as i.i.d. $\mathcal{F}_0$-measurable initial states $(X_0^{i}, Y_{0-}^{i})_{i = 1}^\infty$ with common distribution $\nu \in \mathcal{P}(\Sigma)$, independent of the Brownian motions. With no loss of generality we assume that $\Pi$ is the same probability space that also accommodates the Brownian motion and the random initial conditions used in the setting of the MFG in Section \ref{subsec:mfg}. For each $N \ge 1$, define $\mathbb{F}^N  = (\mathcal F_0\vee\mathcal{F}^N_t)_{t \geq 0}$, where $(\mathcal F^N_t)_{t\ge 0}$ is the augmented filtration generated by the Brownian motions $(W^{i})_{i=1}^N$. Then the filtered probability spaces $\Pi^N:=(\Omega, \mathcal{F}, \mathbb{F}^N, \bar \P)$ satisfy the usual conditions. These are the spaces on which we define strong solutions for the SDEs appearing in the $N$-player systems. In what follows the classes of admissible strategies associated to the probability spaces $\Pi^N$ are denoted by $\Xi^{\Pi^N}$ with the same meaning as in Section \ref{sec:notation} but with $\Pi^N$ instead of $\Pi$.

Each player $i = 1, \ldots, N$ observes/controls her own private state process $(X^{N,i}, Y^{N,i})$, whose dynamics is
\begin{equation}
\begin{split}
& X_{t}^{N,i}= X_0^{i} + \int_{0}^{t}\,a(X_s^{N,i},m_s^{N})\,\ud s + \int_0^t\,\sigma(X_s^{N,i})\,\ud W_s^{i},\\
& Y_{t}^{N,i} = Y_{0-}^{i} + \xi^{N,i}_t,\quad\quad t \in [0,T],
\end{split}
\label{eq:dynamics_Nplayer_01}
\end{equation}
where $\xi^{N,i}\in\Xi^{\Pi^N}(Y^{i}_{0-})$ is the strategy chosen by the $i$-th player, while $m^{N}$ is the mean-field interaction term given by
\begin{equation} \label{emp-mean2}
m_t^{N} = \frac{1}{N}\sum_{i = 1}^{N} Y_t^{N,i} = \int_{\Sigma}\,y\,\mu_t^{N}(\ud x,\ud y),\quad \mu_t^{N} = \frac{1}{N}\sum_{i = 1}^{N} \delta_{(X_t^{N,i}, Y_{t}^{N,i})}.
\end{equation}
The process $\mu_t ^N$ above denotes the empirical distribution of the players' states, with $\delta_{z}$ the Dirac delta mass at $z \in \Sigma$. 

In the rest of this section we use the notations $\xi^N:=(\xi^{N,i})_{i=1}^N$ and
\[
\Xi^N(Y_{0-})=\{\xi^N:\:\xi^{N,i}\in\Xi^{\Pi^N}(Y^{i}_{0-})\:\:\text{for all $i=1,\ldots N$ }\},
\]
where $Y_{0-} = (Y_{0-}^{1}, \ldots, Y_{0-}^{N})$.
We will also consider the dynamics of $(X^N,Y^N)$ conditionally on specific initial conditions $({\bf x},{\bf y}):=(x^{i}, y^{i})_{i = 1}^{N}$ drawn independently from the common initial distribution $\nu$. Analogously to \eqref{eq:pbar} we have
\[
\bar\P (\,\cdot\,)=\int_{\Sigma^N} \P_{{\bf x},{\bf y}} (\,\cdot\,)\nu^N (\ud {\bf x},\ud{\bf y}),\quad \bar\E (\,\cdot\,)=\int_{\Sigma^N} \E_{{\bf x},{\bf y}} (\,\cdot\,)\nu^N (\ud {\bf x},\ud{\bf y}),
\]
where $\nu^N$ is the $N$-fold product of the measure $\nu$. The dynamics of the state variables under $\P_{{\bf x},{\bf y}}$ reads 
\begin{equation*}
\begin{split}
& X_{t}^{N,i}= x^{i} + \int_{0}^{t}\,a(X_s^{N,i},m_s^{N})\,\ud s + \int_0^t\,\sigma(X_s^{N,i})\,\ud W_s^{i},\\
& Y_{t}^{N,i} = y^{i} + \xi^{N,i}_t,\quad\quad t \in [0,T].
\end{split}
\label{eq:dynamics_Nplayer_02}
\end{equation*}
Accordingly, since the initial conditions $({\bf x},{\bf y})=(x^{i}, y^{i})_{i=1}^N$ are drawn from $\nu^N$,  the expected payoff of the $i$-th player is given by
\begin{equation}
J^{N,i}(\xi^{N}) := \int_{\Sigma^{N}} J^{N,i}({\bf x}, {\bf y}; \xi^{N}) \nu^{N}(\ud {\bf x}, \ud {\bf y}),
\nonumber
\end{equation}  
where 
\[
J^{N,i}({\bf x}, {\bf y}; \xi^{N}):=\E_{{\bf x},{\bf y}}\left[\int_0^T e^{-rt}f\left(X^{N,i}_t,Y^{N,i}_t\right)\ud t-\int_{[0,T]}e^{-r t}c_0\ud \xi^{N,i}_t\right].
\]

\begin{definition}[$\varepsilon$-Nash equilibrium for the $N$-player game] Given $\varepsilon \ge 0$, an admissible strategy vector $\xi^\eps\in\Xi^N(Y_{0-})$ is called  \textit{$\varepsilon$-Nash equilibrium} for the $N$-player game of capacity expansion if for every $i = 1,\ldots,N$ and for every admissible individual strategy $\xi^{i}\in\Xi^{\Pi^N}(Y^{i}_{0-})$, we have
\begin{equation}
J^{N,i}(\xi^\eps)\geq J^{N,i}([\xi^{\eps, -i},\xi^i])-\varepsilon,\quad
\nonumber
\end{equation}
where $[\xi^{\eps,-i},\xi^i]$ denotes the $N$-player strategy vector that is obtained from $\xi^{\eps}$ by replacing 
the $i$-{th} entry with $\xi^i$.
\label{def:nashEq}
\end{definition}

In order to construct $\eps$-Nash equilibria using the optimal control obtained in the MFG it is convenient to make an additional set of assumptions on the profit function. 
\begin{assumption}\label{ass:f2}
The running payoff $f$ is locally Lipschitz, i.e.
\begin{equation}\label{eq:local-Lipschitz}
|f(x,y)-f(x',y')|\leq \Lambda(x,x')(|x-x'|+|y-y'|),\quad (x,y),(x',y')\in\Sigma.
\end{equation}
Moreover, there exists $q>1$ such that the function $\Lambda:  \mathbb{R} \times \mathbb{R}\rightarrow\R_+$ satisfies the integrability condition
\begin{equation}\label{eq:int-Lambda}
C(\Lambda,q):=\sup_{N\in\N}\,\sup_{\xi^{N,1}}\bar\E\left[\int_0^T\Lambda^q(X_t^{N,1},X^*_t)\ud t\right]<\infty 
\end{equation}
where $X^{N,1}$ is the solution of \eqref{eq:dynamics_Nplayer_01}, $ X^*=\widetilde X$ is the solution of \eqref{eq:dynamics_mfg_01} obtained in the MFG (see also \eqref{limit-SDE}) and the supremum is taken over all admissible controls $\xi^{N,1}\in\Xi^{\Pi^N}(Y^1_{0-})$ and all $N\in\N$.
\end{assumption}

The assumption above is of technical nature and it is needed in the proof of Theorem \ref{teo:approximation} in order to use dominated convergence in some steps of the construction of $\varepsilon$-Nash equilibria. In Section \ref{sec:Lip} we will provide two examples of running profit of Cobb-Douglas type that satisfy that assumption.
The integrability condition \eqref{eq:int-Lambda} is redundant if $f$ is Lipschitz continuous. Since $Y^{N,1}\in[0,1]$ there is no loss of generality in taking $\Lambda$ independent of $y$ and the supremum over $\xi^{N,1}$ is not restrictive either. If $\Lambda$ has polynomial growth of order $p\ge 1$, then \eqref{eq:int-Lambda} holds thanks to the Lipschitz continuity of the coefficients $a(x,m)$ and $\sigma(x)$ as soon as $\E[(X_0)^{p\cdot q}]<\infty$. Later in Section \ref{subsec:gbm} we will consider and example where $\Lambda$ has exponential growth  and \eqref{eq:int-Lambda} holds.

The next is an assumption on the optimal boundary found in Theorem \ref{teo:existenceSolMFG}.
\begin{assumption}\label{ass:Lip-c}
The optimal boundary $(t,x)\mapsto c(t,x)$ of Theorem \ref{teo:existenceSolMFG} is uniformly Lipschitz continuous in $x$ with constant $\theta_c>0$, i.e.
\[
\sup_{0\le t\le T}|c(t,x)-c(t,x')|\le \theta_c|x-x'|,\quad x,x'\in\R.
\] 
\end{assumption}

The study of regularity of free boundaries in singular stochastic control and in optimal stopping has a long tradition. 
Early work by, e.g., \cite{VM75} and \cite{K74} address the question in optimal stopping of one-dimensional diffusions with finite-time horizon. In those cases the free boundary is a function of time only and it is shown to be continuously differentiable away from the terminal time in the optimisation. Methods from those papers (and subsequent ones in analogous settings) do not extend easily to free boundaries which are functions of several variables. 

In singular stochastic control some of the main contributions were given by \cite{soner1991free}, \cite{soner1989} and \cite{williams1994regularity}, who adopt an approach based on variational inequalities with gradient constraint. The latter two papers consider problems with an infinite-time horizon, which lead to elliptic variational problems. The first paper is more closely related to our set-up as it considers problems with finite-time horizon and $d$-dimensional dynamics (with controls acting on a single spatial coordinate). In \cite{soner1991free} the free boundary is shown to be a Lipschitz continuous function of time and of $d\!-\!1$ spatial coordinates. Differently from our set-up the state dynamics in all three papers has diffusive component in all its spatial coordinates, thus leading to uniformly elliptic second order differential operators. In our case there is no diffusive component in the $y$-coordinate, so those PDE techniques cannot be employed directly.

Free boundary problems are also widely studied in the PDE literature, often beyond their applications in stochastic control theory. Self-contained expositions of fundamental results and complementary methods are contained, for example, in the monographs \cite{F88} and \cite{caffarelli2005geometric}. Also in these references the regularity of the free boundary is analysed when the second order differential operator is the Laplacian or a uniformly elliptic operator with suitable coefficients. Extensions to degenerate settings like ours are not straightforward.

A probabilistic approach developed in \cite{deangelisStabile} instead does not require uniform non-degeneracy of the state dynamics. In Section \ref{sec:Lip} we show how those ideas can be used in our framework to prove that Assumption \ref{ass:Lip-c} indeed holds in a large class of examples.

\subsection{Approximate Nash equilibria}
Here we prove that the MFG solution constructed in Theorem \ref{teo:existenceSolMFG} induces approximate Nash equilibria in the $N$-player game of capacity expansion, when $N$ is large enough. 
\begin{theorem}[Approximate Nash equilibria for the $N$-player game]\label{teo:approximation} Suppose Assumptions \ref{ass:SDE}--\ref{ass:Lip-c} hold. Recall the solution $(m^*,\xi^*)$ of the MFG of capacity expansion constructed in Theorem \ref{teo:existenceSolMFG}. Notice that the control $\xi^*$ is in feedback form and reads:
\begin{equation}
\xi^*_t= \eta^*(t,X^*,Y_{0-}),\quad t\in[0,T],
\nonumber
\end{equation}
with $\eta^*:[0,T]\times C([0,T]; \mathbb R)\times[0,1]\to [0,1]$ the non-anticipative mapping defined by
\begin{equation}
\eta^*(t,\varphi,y):= \sup_{0\le s\le t}\Big(c(s,\varphi(s))-y\Big)^+
\label{eq:eta*}
\end{equation}
and with $X^*$ the dynamics in \eqref{eq:dynamics_mfg_01} associated to $m^*$. 

Setting $\hat \xi^{N,i}_t:= \eta^*(t,X^{N,i},Y^i_{0-})$, the vector $\hat \xi^{N}$ is a $\varepsilon_N $-Nash equilibrium for the $N$-player game of capacity expansion according to Definition \ref{def:nashEq} with $\varepsilon_N \rightarrow 0$ as $N\rightarrow\infty$. Further, if $q\ge 2$ in \eqref{eq:int-Lambda} of Assumption \ref{ass:f2}, then the rate of convergence is at least of order $N^{-1/2}$.
\end{theorem}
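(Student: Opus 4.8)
The plan is to combine a propagation-of-chaos estimate for the controlled $N$-player system with the optimality of $\xi^*$ in the MFG (Proposition \ref{prop:ex-uniq}), exploiting that the $X$-dynamics is control-independent and that the feedback map $\eta^*$ is Lipschitz under Assumption \ref{ass:Lip-c}. By exchangeability it suffices to bound the loss of player $1$. First one checks admissibility: $\hat\xi^{N,i}_t=\eta^*(t,X^{N,i},Y^i_{0-})$ is non-decreasing, bounded by $1-Y^i_{0-}$ (since $c\le 1$), $\mathbb F^N$-adapted because $X^{N,i}$ is, and right-continuous by the upper semi-continuity of $c$ exactly as in Lemma \ref{lem:SK}; hence $\hat\xi^N\in\Xi^N(Y_{0-})$. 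On the common probability space, identify the MFG Brownian motion with $W^1$, so that $X^*$ of \eqref{limit-SDE} started at $X_0^1$ is driven by $W^1$, and let $\bar X^j$, $j\ge 2$, be the analogous i.i.d.\ copies driven by $W^j$, with $\bar Y^j_t:=Y^j_{0-}+\eta^*(t,\bar X^j,Y^j_{0-})$; then $(X^*,Y^*),(\bar X^2,\bar Y^2),(\bar X^3,\bar Y^3),\dots$ are i.i.d.\ and $\bar\E[\bar Y^j_t]=\bar\E[Y^*_t]=m^*(t)$.

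\emph{Step 1 (coupling).} When all players use $\hat\xi^N$, set $\Delta^1_t:=\sup_{s\le t}|X^{N,1}_s-X^*_s|$ and $\Delta^j_t:=\sup_{s\le t}|X^{N,j}_s-\bar X^j_s|$ for $j\ge 2$. Lipschitz continuity of $a,\sigma$ (Assumption \ref{ass:SDE}-(i)) together with the Burkholder--Davis--Gundy and Doob inequalities gives
\[
\bar\E\big[(\Delta^j_t)^2\big]\le C\!\int_0^t\!\bar\E\big[(\Delta^j_s)^2\big]\ud s+C\!\int_0^t\!\bar\E\big[|m^N_s-m^*(s)|^2\big]\ud s.
\]
Here Assumption \ref{ass:Lip-c} is essential: it makes $\varphi\mapsto\eta^*(t,\varphi,y)$ Lipschitz for the sup-norm with constant $\theta_c$, whence $|m^N_s-\tfrac1N\sum_k\bar Y^k_s|\le\tfrac{\theta_c}{N}\sum_k\Delta^k_s$, while $\bar\E\big[|\tfrac1N\sum_k\bar Y^k_s-m^*(s)|^2\big]\le 1/N$ by the $L^2$ law of large numbers (the $\bar Y^k_s$ are i.i.d.\ with values in $[0,1]$). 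Using Jensen and exchangeability of the players to close the loop, Gronwall's lemma yields $\sup_{t\le T}\bar\E[(\Delta^j_t)^2]\le C/N$ for all $j$. The same argument, now carrying an extra $O(1/N^2)$ term that accounts for the single deviating player's contribution $\tfrac1N(Y^1_{0-}+\xi^1_t)$ to the empirical mean, shows that if player $1$ replaces $\hat\xi^{N,1}$ by an arbitrary $\xi^1\in\Xi^{\Pi^N}(Y^1_{0-})$ then, denoting by $\tilde X^{N,j}$ the corresponding states, $\sup_{t\le T}\bar\E[\sup_{s\le t}|\tilde X^{N,j}_s-X^*_s|^2]\le C/N$ \textbf{uniformly over the deviation} $\xi^1$, the constant $C$ being independent of $\xi^1$ precisely because the $X$-equation does not see the control. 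Upgrading these $L^2$ bounds to $L^{q'}$ bounds, $q':=q/(q-1)$, is achieved by re-running the same Gronwall scheme, the higher moments of $\tfrac1N\sum_k\bar Y^k_s-m^*(s)$ being controlled by the Marcinkiewicz--Zygmund inequality since the summands are bounded.

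\emph{Steps 2--3 (payoff comparison and conclusion).} Using the local Lipschitz bound on $f$ (Assumption \ref{ass:f2}), the inequality $|Y^{N,1}_t-Y^*_t|\le\theta_c\Delta^1_t$ coming from the Lipschitz property of $\eta^*$, and Hölder's inequality with exponents $q,q'$ together with the uniform bound $C(\Lambda,q)$ of \eqref{eq:int-Lambda}, one obtains
\[
\big|J^{N,1}(\hat\xi^N)-J(\xi^*)\big|\le(1+\theta_c)\,C(\Lambda,q)^{1/q}\Big(\bar\E\!\int_0^T(\Delta^1_t)^{q'}\ud t\Big)^{1/q'}=:\rho_N\to 0,
\]
where $J(\xi^*)=\bar\E\big[\int_0^Te^{-rt}f(X^*_t,Y^*_t)\ud t-\int_{[0,T]}e^{-rt}c_0\,\ud\xi^*_t\big]$ by Definition \ref{def:solMFG}. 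For a deviation $\xi^1$, since $\tilde X^{N,1}$ is control-independent and, by Step 1, close to $X^*$ uniformly in $\xi^1$, the same computation (now the arguments $Y^1_{0-}+\xi^1_t$ of $f$ agree) gives
\[
J^{N,1}\big([\hat\xi^{N,-1},\xi^1]\big)\le\bar\E\Big[\int_0^Te^{-rt}f(X^*_t,Y^1_{0-}+\xi^1_t)\ud t-\!\int_{[0,T]}\!e^{-rt}c_0\,\ud\xi^1_t\Big]+\rho'_N
\]
with $\rho'_N\to 0$ uniformly over $\xi^1$. Conditioning on $(X_0^1,Y^1_{0-})=(x,y)$, the bracketed expectation equals $\hat J(0,x,y;\xi^1)\le\hat v(0,x,y)$ by Proposition \ref{prop:ex-uniq} (whose verification argument rests on a supermartingale property that is insensitive to the underlying filtration, hence applies on $\Pi^N$), so averaging over $\nu$ the bracket is $\le\int_\Sigma\hat v(0,x,y)\nu(\ud x,\ud y)=J(\xi^*)$. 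Collecting the estimates,
\[
J^{N,1}\big([\hat\xi^{N,-1},\xi^1]\big)\le J(\xi^*)+\rho'_N\le J^{N,1}(\hat\xi^N)+\rho_N+\rho'_N,
\]
so the statement holds with $\varepsilon_N:=\rho_N+\rho'_N\to 0$ (by exchangeability the bound is player-independent). When $q\ge 2$ we have $q'\le 2$, and Jensen gives $(\bar\E\int_0^T(\Delta^1_t)^{q'}\ud t)^{1/q'}\le(\int_0^T\bar\E[(\Delta^1_t)^2]\ud t)^{1/2}\le (CT/N)^{1/2}$, whence $\varepsilon_N=O(N^{-1/2})$.

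\emph{Main obstacle.} I expect the crux to be Step 1 --- the propagation-of-chaos/coupling estimate --- and in particular the requirement that it be uniform over all admissible single-player deviations. This is exactly where Assumptions \ref{ass:Lip-c} (Lipschitz optimal boundary, needed to close the Gronwall loop through the mean-field term $m^N$) and \ref{ass:f2} (integrability of the local Lipschitz modulus $\Lambda$, uniform in $N$ and in the deviation) enter decisively; once Step 1 is in place, the remainder is a routine combination of Gronwall and BDG estimates, the law of large numbers, and the MFG optimality inequality of Proposition \ref{prop:ex-uniq}.
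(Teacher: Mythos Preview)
Your proposal is correct and follows essentially the same three-step strategy as the paper: a propagation-of-chaos coupling closed via Gronwall using the Lipschitz property of $\eta^*$ (Assumption \ref{ass:Lip-c}) and exchangeability, a payoff comparison via Assumption \ref{ass:f2} and H\"older, and finally the MFG optimality to bound the deviation. Two minor points of difference: the paper handles the cost term $\int_{[0,T]}e^{-rt}c_0\,\ud(\hat\xi^{N,1}-\xi^*)$ separately via integration by parts (your displayed bound on $|J^{N,1}(\hat\xi^N)-J(\xi^*)|$ omits it, though the same estimate $|\hat\xi^{N,1}_t-\xi^*_t|\le\theta_c\Delta^1_t$ closes it immediately), and the filtration issue for the deviating control is settled in the paper simply by the inclusion $\mathbb F^N\subset\bar{\mathbb F}$---so that any $\xi^1\in\Xi^{\Pi^N}(Y^1_{0-})$ is already admissible for the MFG---rather than by appealing to a filtration-robust verification argument.
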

\begin{proof}
For each Brownian motion $W^i$ we introduce the following auxiliary dynamics with $m^*$ as in Theorem \ref{teo:existenceSolMFG} and with $\eta^*$ as defined in \eqref{eq:eta*}:
\begin{equation}\label{X-limN}
\begin{split}
& X^i_t=X^i_0+\int_0^t a(X^i_s,m^*(s))ds+\int_0^t\sigma(X^i _s) \ud W^i_s,\\
& Y^i_t=Y^i_{0-}+\zeta^i_t:= Y_{0-}+\eta^*(t,X^i,Y^i_{0-}),\quad t\in[0,T],\quad i\in\lbrace 1,\ldots,N\rbrace.
\end{split}
\end{equation}
These are the analogues of the solution $(X^*,Y^*)$ of \eqref{eq:dynamics_mfg_01}.

Notice that the initial conditions above are the same as in the dynamics of $(X^{N,i},Y^{N,i})$. Moreover, $(X^i_t,Y_t^i)_{i=1}^\infty$ is a sequence of i.i.d.~random variables with values in $\mathbb R\times [0,1]$, so that in particular the law of large numbers (LLN) holds.
The rest of the proof is structured in three steps:
\begin{itemize}
\item[(i)] We prove that $J^{N,1}(\hat \xi^N)\rightarrow J(\xi^*)$ as $N\rightarrow\infty$.
\item[(ii)] Recalling the notation $[\hat \xi^{N,-1},\xi]=(\xi,\hat \xi^{N,2},\ldots,\hat \xi^{N,N})$ introduced in Definition \ref{def:nashEq} we prove 
\begin{equation}
\limsup_{N\to\infty} \sup_{\xi\in\Xi^{\Pi^N}(Y^1_{0-})} J^{N,1}([\hat \xi^{N,-1},\xi])\leq J(\xi^*)=V^\nu.
\nonumber
\end{equation}
\item[(iii)] Combining (i) and (ii), for any $\varepsilon>0$ there exists $N_{\varepsilon}\in\N$ such that
\[
J^{N,1}(\hat \xi^N)\geq \sup_{\xi \in \Xi^{\Pi^N}(Y^{1}_{0-})}J^{N,1}([\hat\xi^{N,-1}, \xi])-\varepsilon 
\] 
for all $N\geq N_{\varepsilon}$. 
\end{itemize} 
In the three steps above we singled out the first player with no loss of generality since the $N$-player game is symmetric.
\vspace{+4pt}

(i) Let us start by observing that $(X^*,Y^*,\xi^*)$ from Theorem \ref{teo:existenceSolMFG} and $(X^1,Y^1,\zeta^1)$ defined above have the same law, so that 
\[
J(\xi^*)= \bar{\E}\left[\int_0^T e^{-r s}f(X^1_t,Y^1_t)\ud t-c_0\int_{[0,T]}e^{-r t}\ud \zeta^1_t\right].
\]
By triangular inequality we get 
\begin{align}\label{eq:J-J}
|J^{N,1}(\hat \xi^N)- J(\xi^*)| & \le \bar{\E}\left[\int_0^Te^{-rt}\Big|f(\hat X_t^{N,1},\hat Y_t^{N,1})-f(X_t^1,Y_t^1)\Big| \ud t \right] \\
& \quad + c_0 \bar{\E}\left[\left|\int_{[0,T]}e^{-rt}\ud \big(\hat \xi^{N,1}_t-\zeta^1_t\big)\right|\right],\notag
\end{align}
where we use $(\hat X^{N,i},\hat Y^{N,i})$ for the state process of the $i$-th player when all players use the control vector $\hat \xi^N$. Similarly we denote by $\hat m^N$ the empirical average of the processes $\hat Y^{N,i}$, that is
\[
\hat m^N_t=\frac{1}{N}\sum_{i=1}^N \hat Y^{N,i}_t.
\]
We estimate the first term on the right-hand side using Assumption \ref{ass:f2} and obtain
\begin{align*}
&\bar{\E}\left[\int_0^T e^{-rt}\Big|f(\hat X_t^{N,1},\hat Y_t^{N,1})-f(X_t^1,Y_t^1)\Big|\ud t \right]\nonumber\\
&\leq \bar{\E}\left[\int_0^Te^{-rt}\Lambda(\hat X_t^{N,1},X_t^1)\big(|\hat X_t^{N,1}-X_t^1|+|\hat Y_t^{N,1}-Y_t^1|\big)\ud t \right]\nonumber\\
&\leq C_{1} \bar{\E}\left[\int_0^T\!\!\Lambda^q(\hat X_t^{N,1},X_t^1)\ud t\right]^{\frac{1}{q}} \bar{\E}\left[\int_0^T\big(|\hat X_t^{N,1}-X_t^1|^{p}+|\hat Y_t^{N,1}-Y_t^1|^{p}\big)\ud t\right]^{\frac{1}{p}}\nonumber\\
&\leq C_{1} C(\Lambda,q) \bar{\E}\left[\int_0^T \big(|\hat X_t^{N,1}-X_t^1|^{p}+|\hat Y_t^{N,1}-Y_t^1|^{p}\big)\ud t\right]^{\frac{1}{p}},
\end{align*}
for some positive constant $C_{1}=C_1(T,q)$, using H\"older's inequality with $p=q/(q-1)$ and $q>1$ as in Assumption \ref{ass:f2}. For the remaining term in \eqref{eq:J-J} we use integration by parts and $\hat \xi^{N,1}_{0-}=\zeta^1_{0-}=0$ to obtain
\begin{align}
\bar{\E}\left[\left|\int_{[0,T]}e^{-rt}\ud \big(\hat \xi^{N,1}_t-\zeta^1_t\big)\right|\right]=\bar{\E}\left[\left|\int_0^T e^{-rt}\big(\hat \xi^{N,1}_t-\zeta_t^1\big)\ud t \right|\right]\le \bar{\E}\left[\int_0^Te^{-rt}\Big|\hat \xi^{N,1}_t-\zeta_t^1\Big|\ud t\right].
\nonumber
\end{align}
Recall that $\hat \xi^{N,1}_t=\eta^*(t,\hat X^{N,1},Y^{1}_{0-})$ and $\zeta^1_t=\eta^*(t,X^{1},Y^{1}_{0-})$. Then using Assumption \ref{ass:Lip-c} we obtain
\begin{align}\label{lip0}
\Big|\hat \xi^{N,1}_t-\zeta_t^1\Big|\le \sup_{0\le s\le t}\big|c(s,\hat X^{N,1}_s)-c(s,X^1_s)\big|\le \theta_c \sup_{0\le s\le t}\big|\hat X^{N,1}_s-X^1_s\big|
\end{align}
and the same bound also holds for $|\hat Y^{N,1}_t-Y^1_t|$. Then, combining the above estimates we arrive at
\begin{align}\label{eq:JJN}
|J^{N,1}(\hat \xi^N)- J(\xi^*)|\le&\, C_{1} C(\Lambda,q)T(1+\theta_c) \bar{\E}\Big[\sup_{0\le t\le T}|\hat X_t^{N,1}-X_t^1|^{p}\Big]^{\frac{1}{p}}\\
&+c_0\theta_c T\,\bar{\E}\Big[\sup_{0\le t\le T}\big|\hat X^{N,1}_t-X^1_t\big|\Big].\notag
\end{align}
Since $p>1$ it remains to show that 
\begin{align}\label{limXXN}
\lim_{N\to\infty} \bar{\E}\Big[\sup_{0\le t\le T}|\hat X_t^{N,1}-X_t^1|^{p}\Big]=0.
\end{align}
Repeating the same estimates as those in the proof of Lemma \ref{lem:convXY} but with $(\hat X^{N,1},X^1)$ instead of $(X^{[n]},X')$ and with $(\hat m^N,m^*)$ instead of $(m^{[n]},\widetilde m)$ we obtain 
\begin{align}\label{XXN-0}
\bar{\E} \Big[\sup_{0\le t\le T}|\hat X_t^{N,1}-X_t^1|^{p}\Big]\le C\,\bar{\E}\left[\int_0^T\left|\hat m^N_t-m^*(t)\right|^p\ud t\right],
\end{align}
for some constant $C>0$ depending on $p$, $T$ and the Lipschitz constant of the coefficients $a(\,\cdot\,)$ and $\sigma(\,\cdot\,)$. 

In order to estimate the right-hand side of \eqref{XXN-0} we first observe that for $Y^i$ introduced in \eqref{X-limN} we have
\begin{equation}\label{eq:epsN}
\varepsilon_{p,N}:= \int_0^T \bar{\E} \left[\,\left|\frac{1}{N}\sum_{i=1}^N Y^i_t-m^*(t)\right|^{p}\,\right]\ud t\rightarrow 0, \quad\text{as $N \to \infty$,}
\end{equation}
by the LLN and the bounded convergence theorem, since $(Y^i_t)_{i=1}^N$ are i.i.d.~with mean $m^*(t)$ (recall that $\eta^*$ is the feedback map of the optimal control in the MFG).
Hence, we have
\begin{align}\label{eq:limN}
\bar{\E}\left[\int_0^T|\hat m^N_t-m^*(t)|^{p}\ud t\right] \le& 2^{p-1}\int_0^T \bar{\E}\left[\,\left|\frac{1}{N}\sum_{i=1}^N(\hat Y^{N,i}_t-Y^i_t)\right|^{p}\,\right]\ud t+2^{p-1}\varepsilon_{p,N}\nonumber\\
\leq &2^{p-1}\int_0^T\frac{1}{N}\sum_{i=1}^N \bar{\E}\left[|\hat Y^{N,i}_t-Y^i_t|^{p}\right]\ud t+2^{p-1}\varepsilon_{p,N}\\
=&2^{p-1}\int_0^T \bar{\E} \left[\,\left|\hat Y^{N,1}_t-Y^1_t\right|^{p}\,\right]\ud t +2^{p-1}\varepsilon_{p,N}\nonumber\\
\le& 2^{p-1}\theta^p_c \int_0^T \bar{\E}\left[\,\sup_{0\le s\le t}\left|\hat X^{N,1}_s-X^1_s\right|^{p}\,\right]\ud t +2^{p-1}\varepsilon_{p,N}\nonumber
\end{align}
where the first inequality uses $|a+b|^p\le 2^{p-1}(|a|^p+|b|^p)$, the second inequality follows by Jensen's inequality ($p>1$), the equality by the fact that the processes $(\hat Y^{N,i}-Y^i)_{i=1}^N $ are exchangeable and the final inequality uses \eqref{lip0} applied to $|\hat Y^{N,1}_t-Y^1_t|$.

Plugging the latter estimate back into \eqref{XXN-0} and applying Gronwall's lemma we obtain
\[
\bar{\E}\Big[\sup_{0\le t\le T}|\hat X_t^{N,1}-X_t^1|^{p}\Big]\le C' \varepsilon_{p,N},
\]
for a suitable constant $C'>0$ depending on $T$ and the other constants above. Thanks to \eqref{eq:epsN} we obtain \eqref{limXXN}.
\vspace{+4pt}

(ii). This part of the proof is similar to the above but now the first player deviates by choosing a generic admissible control $\xi$ while all remaining players pick $\hat \xi^{N,i}$, $i=2,\ldots, N$; we denote this strategy vector $\beta^N=[\hat \xi^{N,-1},\xi]$. In particular we notice that the empirical average associated to this strategy reads
\[ 
\frac{1}{N}\left(Y^{1}_{0-}+\xi_t+\sum_{i=2}^N(Y^i_{0-}+\hat \xi^{N,i}_t)\right)=\bar m^N_t+\frac{1}{N}(\xi_t-\hat \xi^{N,1}_t),
\]
where $\bar m^N_t:=N^{-1}\sum_{i=1}^N(Y^i_{0-}+\hat \xi^{N,i}_t)$. One should be careful here that $\bar m^N$ is different to $\hat m^N$ used in the proof of (i) above, because the deviation of player 1 from the strategy vector $\hat \xi^N$ causes a knock-on effect on the dynamics of $\hat \xi^{N,i}$ for all $i$'s through the non-anticipative mapping $\eta^*(t,X^{N,i;\beta},Y^i_{0-})$. To keep track of this subtle aspect we  use the notations $\hat \xi^{N,i;\beta}_t=\eta^*(t,X^{N,i;\beta},Y^i_{0-})$ and $\bar Y^{N,i;\beta}_t=Y^i_{0-}+\hat \xi^{N,i;\beta}_t$, for $i=1,\ldots, N$, in the calculations below.
Accordingly, the state process of the first player reads
\begin{align*}
&X^{N,1;\beta}_t=X^{1}_0+\int_0^t a\big(X^{N,1;\beta}_s,\bar m^N_s+N^{-1}(\xi_s-\hat \xi^{N,1;\beta}_s)\big)\ud s+\int_0^t\sigma(X^{N,1;\beta}_s)\ud W^1_s\\
&Y^{N,1;\beta}_t=Y^{1}_{0-}+\xi_t,\qquad t\in[0,T].
\end{align*}

Using the above expression for $X^{N,1;\beta}$ and the same arguments as in \eqref{XXN-0} we obtain
\begin{align*}
& \bar{\E}\Big[\sup_{0\le t\le T}|X_t^{N,1;\beta}-X_t^1|^{p}\Big]\\
&\le C\, \bar{\E}\left[\int_0^T\left|\bar m^N_t+N^{-1}(\xi_t-\hat \xi^{N,1;\beta}_t)-m^*(t)\right|^p\ud t\right]\\
&\le 2^{p-1}C\, \bar{\E}\left[\int_0^T\left|\bar m^N_t-m^*(t)\right|^p\ud t\right]+2^p 2^{p-1} C T N^{-p},
\end{align*}
where the final inequality uses $|a+b|^p\le 2^{p-1}(|a|^p+|b|^p)$ and $|\xi_t-\hat \xi^{N,1;\beta}_t|\le 2$ (by the finite-fuel condition), and $C>0$ is a suitable constant. Repeating the same steps as in \eqref{eq:limN} we have
\begin{align}
\bar{\E}\left[\int_0^T|\bar m^N_t-m^*(t)|^{p}\ud t\right] \le& 2^{p-1}\int_0^T \bar{\E}\left[\,\left|\frac{1}{N}\sum_{i=1}^N(\bar Y^{N,i;\beta}_t-Y^i_t)\right|^{p}\,\right]\ud t+2^{p-1}\varepsilon_{p,N}\nonumber\\
\le&2^{p-1}\int_0^T \bar{\E}\left[\,\left|\bar Y^{N,1;\beta}_t-Y^1_t\right|^{p}\,\right]\ud t +2^{p-1}\varepsilon_{p,N}\nonumber\\
\le& 2^{p-1}\theta^p_c \int_0^T \bar{\E}\left[\,\sup_{0\le s\le t}\left| X^{N,1;\beta}_s-X^1_s\right|^{p}\,\right]\ud t +2^{p-1}\varepsilon_{p,N}\nonumber
\end{align}
where we have used that $(\bar Y^{N,i;\beta}-Y^i)_{i=1}^N$ are exchangeable by construction. Combining the two estimates above and using Gronwall's inequality we obtain a bound which is uniform with respect to $\xi\in\Xi^{\Pi^N}(Y^1_{0-})$. In particular we have  
\begin{align}\label{limXb}
\lim_{N\to\infty}\sup_{\xi\in\Xi^{\Pi^N}(Y^1_{0-})} \bar{\E}\Big[\sup_{0\le t\le T}|X_t^{N,1;\beta}-X_t^1|^{p}\Big]\le C'\lim_{N\to \infty}\big(\eps_{p,N}+N^{-p}\big)  =0,
\end{align}
where $C'>0$ is the constant appearing from Gronwall's inequality. 
Since any $\xi\in\Xi^{\Pi^N}(Y^1_{0-})$ is admissible but suboptimal in the MFG with state process $X^1$ as in \eqref{X-limN} we get 
\begin{align*}
&\sup_{\xi\in\Xi^{\Pi^N}(Y^1_{0-})}J^{N,1}([\hat \xi^{N,-1},\xi])-V^\nu\\
&\le \sup_{\xi\in\Xi^{\Pi^N}(Y^1_{0-})}\left(J^{N,1}([\hat \xi^{N,-1},\xi])-J(\xi)\right)\\
&\le \sup_{\xi\in\Xi^{\Pi^N}(Y^1_{0-})} \bar{\E}\left[\int_0^T e^{-r s}\Big(f\big(X_t^{N,1;\beta}, Y^{1}_{0-}+\xi_t\big)-f\big(X_t^{1}, Y^{1}_{0-}+\xi_t\big)\Big)\ud t\right]\\
&\le \sup_{\xi\in\Xi^{\Pi^N}(Y^1_{0-})} \bar{\E}\left[\int_0^Te^{-rt}\Lambda\big(X^{N,1;\beta}_t,X^1_t\big)\big|X^{N,1;\beta}_t-X^{1}_t\big|\ud t\right]
\end{align*}
where in the final inequality we used Assumption \ref{ass:f2}. Now, arguing as in (i) and using \eqref{limXb} and \eqref{eq:int-Lambda} we obtain
\begin{align*}
\limsup_{N\to\infty}\sup_{\xi\in\Xi^{\Pi^N}(Y^1_{0-})}J^{N,1}\big([\hat \xi^{N,-1},\xi]\big)\le V^\nu=J(\xi^*),
\end{align*}
where the final equality holds by optimality of $\xi^*$ in the MFG.
\vspace{+4pt}

(iii). This step follows from the previous two. With no loss of generality we consider only the first player as the game is symmetric. Given $\eps>0$, thanks to (ii) there exists $N_{\eps}>0$ sufficiently large that for any $\xi\in\Xi^{\Pi^N}(Y^{1}_{0-})$
\[
J^{N,1}([\hat \xi^{N,-1},\xi])\le V^\nu +\frac{\eps}{2}\quad\text{for all $N>N_{\eps}$}.
\]
From (i), with no loss of generality we can also assume $N_{\eps}>0$ sufficiently large that
\[
J^{N,1}(\hat \xi^N)\ge V^\nu-\frac{\eps}{2}\quad\text{for all $N>N_{\eps}$}.
\]
Combining the two inequalities above we obtain that for all $\xi\in\Xi^{\Pi^N}(Y^1_{0-})$ it holds
\[
J^{N,1}(\hat \xi^N)\ge J^{N,1}([\hat \xi^{N,-1},\xi])-\eps\quad\text{for all $N>N_{\eps}$}.
\]

The final claim on the speed of convergence can be verified by taking $q=p=2$ in the above estimates (for $q>2$ the result clearly continues to hold). The leading term in the convergence of \eqref{eq:JJN} is $\sqrt{\eps_{2,N}}$ (see \eqref{limXb}). Equation \eqref{eq:epsN} reads
\[
\varepsilon_{2,N}= \int_0^T\text{Var}\left(\frac{1}{N}\sum_{i=1}^N Y^i_t\right)\ud t=\frac{1}{N}\int_0^T\text{Var}(Y^1_t)\,\ud t,
\]
upon noticing that $\E\big[N^{-1}\sum_{i=1}^N Y^i_t\big]=\E[Y^1_t]=m^*(t)$ since $(Y^i)_{i=1}^N$ are i.i.d. Then the claim follows.
\end{proof}

\subsection{Conditions for a Lipschitz continuous optimal boundary}\label{sec:Lip}
Here we complement results from \cite{deangelisStabile} to provide sufficient conditions under which Assumption \ref{ass:Lip-c} holds. Notice that our problem is parabolic and degenerate as there is no diffusive dynamics in the $y$-direction. Therefore, as explained above, classical PDE results cannot be applied. Moreover, we extend \cite{deangelisStabile} by considering non-constant diffusion coefficients in the the dynamics of $X^*$. Thanks to Lemma \ref{lem:limitOS}, the question reduces to finding sufficient conditions on the data of the optimal stopping problem \eqref{eq:uOS} that guarantee a Lipschitz stopping boundary. In \eqref{eq:uOS} the dynamics of $\widetilde X$ was obtained from Lemma \ref{lem:convXY} and it corresponds to the dynamics of $X^*$ in the MFG. In the rest of this section we always use such $X^*$.

We make some additional assumptions on the coefficients of the SDE.
\begin{assumption}\label{ass:dX}
We have $x\mapsto a(x,m)$ and $x\mapsto \sigma(x)$ continuously differentiable with $\partial_x\sigma(x)\ge 0$ and $\partial_x a(x,m)\le \bar a$ for some $\bar a>0$.
\end{assumption} 
Thanks to this assumption we have that the stochastic flow $x\mapsto X^{*;\,t,x}(\omega)$ is continuously differentiable. The dynamics of $Z^{t,x}:=\partial_xX^{*;\,t,x}$ is given by \citep[see][Chapter V.7]{protter2005stochastic}
\begin{align}
Z_{t+s}^{t,x} = 1 + \int_{0}^{s} \!\partial_{x} a(X_{t+u}^{*;\,t,x}, m^*(t\!+\!u))Z_{t+u}^{t,x}\ud u+\!\int_{0}^{s}\! \partial_x \sigma(X_{t+u}^{*;\,t,x})Z_{t+u}^{t,x}\,\ud W_{t+u},
\label{eq:dynamic_derivative}
\end{align}
for all $(t,x)\in[0,T]\times\R$ and $s\in[0,T-t]$. The solution of \eqref{eq:dynamic_derivative} is explicit in terms of $X^*$ and it reads
\begin{align}
Z_{t+s}^{t,x} = \exp\Big[\int_{0}^{s}\!\Big(\partial_{x} a(X_{t+u}^{*;\,t,x}, m^*(t+u))\! -\! \frac{1}{2}\partial_{x}\sigma(X_{t+u}^{*;\,t,x})^2\Big)\,\ud u + \int_{0}^{s}\!\partial_{x}\sigma(X_{t+u}^{*;t,x})\,\ud W_{t+u} \Big],
\label{eq:representation_of_Z}
\end{align} 
for $(t,x)\in[0,T]\times\R$ and $s\in[0,T-t]$. Thanks to this explicit formula we can deduce that $(t,x)\mapsto Z^{t,x}$ is a continuous flow, by continuity of the flow $(t,x)\mapsto X^{*;\,t,x}$.

Later on we will perform a change of measure using $Z$ and for that we also require:
\begin{assumption}\label{ass:Z}
For all $(t,x)\in[0,T]\times\R$ we have
\begin{align}\label{ass:Z2}
\E_{t,x}\left[\int_0^{T-t}\Big(\partial_x \sigma(X^*_{t+u})Z_{t+u}\Big)^2\ud u\right]<+\infty.
\end{align}
\end{assumption}
Then, letting $Z_T=Z^{0,x}_T$,
\begin{align}\label{eq:Q}
\frac{\ud \Q}{\ud \P}\Big|_{\cF_T}:=Z_T \exp\left(-\int_{0}^{T}\partial_{x} a(X^*_{t}, m^*(t))\ud t\right)
\end{align}
defines the Radon-Nikodym derivative of the absolutely continuous change of measure from $\P$ to $\Q$. 

Next we assume some extra conditions on the profit function. Let $\Sigma^\circ=\R\times(0,1)$.
\begin{assumption}\label{ass:f3}
We have $f\in C^2(\Sigma^\circ)$ and either $\sigma(x)=\sigma$ is constant or we have $x\mapsto \partial_{xy}f(x,y)$ non-increasing. Moreover, the integrability condition below holds:
\begin{eqnarray*}
\sup_{(t,x,y)\in K}\E_{t,x}\left[\int_0^{T-t}\!\!\!e^{-rs}\Big(\left|\partial_{yy}f(X^*_{t+s},y)\right|+(1+Z_{t+s})\left|\partial_{xy}f(X^*_{t+s},y)\right|\Big)\ud s\right]<\infty,
\end{eqnarray*}
for any compact $K\subset [0,T]\times\Sigma^\circ$.
\end{assumption}
Notice that $f(x,y)=x^\alpha y^\beta$ with $\alpha\in(0,1]$, $\beta\in(0,1)$ and $x>0$ satisfies Assumption \ref{ass:f3} combined with Assumption \ref{ass:SDE}.
The next proposition provides sufficient conditions for Lipschitz continuity of the optimal boundary.
\begin{proposition}\label{prop:Lip}
Let Assumptions \ref{ass:SDE}--\ref{ass:int} and Assumptions \ref{ass:dX}--\ref{ass:f3} hold. If either of the two conditions below holds:
\begin{itemize}
\item[(i)] there exist $\alpha,\gamma>0$ such that 
\[
|\partial_{yy}f|\ge \alpha>0\quad\text{and}\quad|\partial_{xy}f|\le \gamma (1+|\partial_{yy}f|)\quad\text{on $\Sigma^\circ$};
\]
\item[(ii)] there exists $\gamma>0$ such that $|\partial_{xy}f|\le \gamma|\partial_{yy}f|$ on $\Sigma^\circ$;
\end{itemize}
then Assumption \ref{ass:Lip-c} holds.
\end{proposition}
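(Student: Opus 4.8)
The plan is to produce a constant $\theta_c>0$, depending only on $T$, on $\bar a$ of Assumption \ref{ass:dX} and on the constants in (i)--(ii), such that on the open continuation set $\cC$ of Lemma \ref{lem:limitOS} the directional inequality
\begin{equation}\label{eq:dir-ineq-plan}
\partial_x u(t,x,y)+\theta_c\,\partial_y u(t,x,y)\le 0
\end{equation}
holds. Granting \eqref{eq:dir-ineq-plan}, fix $t\in[0,T]$, $h>0$ and take any $z\in(c(t,x)+\theta_c h,\,1]$; put $y:=z-\theta_c h\in(c(t,x),1)$, so that $(t,x,y)\in\cC$. Along the segment $r\mapsto\varphi(r):=u(t,x+rh,y+r\theta_c h)$, $r\in[0,1]$, which starts in $\cC$, the function $u$ is $C^{1}$ (on $\cC$ it solves a locally uniformly parabolic linear PDE with source $\partial_y f(\cdot,y)$, which is $C^{1}$ in $x$ since $f\in C^{2}$ by Assumption \ref{ass:f3}), and $\varphi'(r)=h[\partial_x u+\theta_c\partial_y u]\le 0$ by \eqref{eq:dir-ineq-plan}. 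A standard continuity argument — using that $u$ is continuous on $[0,T]\times\Sigma'$ (Corollary \ref{cor:reg-u}) and equals $c_0$ on $\partial\cC$ — shows the segment cannot exit $\cC$ before $r=1$ and that $u(t,x+h,z)\le u(t,x,y)<c_0$; hence $(t,x+h,z)\in\cC$, i.e.\ $z>c(t,x+h)$. Letting $z\downarrow c(t,x)+\theta_c h$ gives $c(t,x+h)\le c(t,x)+\theta_c h$, and combined with the monotonicity of $x\mapsto c(t,x)$ (Lemma \ref{lem:limitOS}) this is exactly Assumption \ref{ass:Lip-c}, with a constant independent of $t$.

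To prove \eqref{eq:dir-ineq-plan} the starting point, adapting \cite{deangelisStabile}, is the probabilistic representation of the first derivatives of $u$ on $\cC$: differentiating under the expectation in \eqref{eq:uOS} — legitimate thanks to differentiability of the flow $x\mapsto X^{*;t,x}$ (Assumption \ref{ass:dX}, with derivative process $Z^{t,x}$ given by \eqref{eq:dynamic_derivative}--\eqref{eq:representation_of_Z}) and the integrability in Assumption \ref{ass:f3} — and exploiting optimality of $\tau_*=\tau_*(t,x,y)$ from \eqref{eq:tau*} (so that differentiation through $\tau_*$ contributes nothing), one obtains
\begin{equation}\label{eq:rep-plan}
\partial_y u(t,x,y)=\E_{t,x}\!\left[\int_0^{\tau_*}\!\!e^{-rs}\partial_{yy}f(X^*_{t+s},y)\,\ud s\right],\qquad
\partial_x u(t,x,y)=\E_{t,x}\!\left[\int_0^{\tau_*}\!\!e^{-rs}\partial_{xy}f(X^*_{t+s},y)\,Z_{t+s}\,\ud s\right].
\end{equation}
Here $\partial_{yy}f<0$ by strict concavity (Assumption \ref{ass:f}-(ii)), $\partial_{xy}f>0$ (Assumption \ref{ass:f}-(iii)) and $Z>0$, consistently with the monotonicity in Corollary \ref{cor:reg-u}.

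The crux is to remove the unbounded factor $Z_{t+s}$ from \eqref{eq:rep-plan}. If $\sigma$ is constant then $\partial_x\sigma\equiv 0$, hence $Z_{t+s}=\exp\!\big(\int_0^s\partial_x a(X^*_{t+u},m^*(t+u))\,\ud u\big)\le\e^{\bar a T}$ by Assumption \ref{ass:dX}, and \eqref{eq:rep-plan} already gives $\partial_x u\le\e^{\bar a T}\,\E_{t,x}[\int_0^{\tau_*}e^{-rs}\partial_{xy}f(X^*_{t+s},y)\,\ud s]$. If $\sigma$ is non-constant, factor $Z_{t+s}=\exp\!\big(\int_0^s\partial_x a\,\ud u\big)\,M_{t+s}$ with $M_{t+s}:=\exp\!\big(\int_0^s\partial_x\sigma(X^*_{t+u})\,\ud W_{t+u}-\tfrac12\int_0^s(\partial_x\sigma(X^*_{t+u}))^2\,\ud u\big)$, a true $\P_{t,x}$-martingale by Assumption \ref{ass:Z}; pass to $\Q$ with $\ud\Q/\ud\P|_{\cF_{t+s}}=M_{t+s}$ (consistent with \eqref{eq:Q}), under which $X^*$ acquires the extra nonnegative drift $\sigma\partial_x\sigma$ (Assumption \ref{ass:dX}). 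Letting $\widehat X$ start at $x$ with the original drift $a$ driven by the $\Q$-Brownian motion, the comparison theorem gives $\widehat X_{t+s}\le X^*_{t+s}$ $\Q$-a.s.; since $x\mapsto\partial_{xy}f(x,y)$ is non-increasing (Assumption \ref{ass:f3}) and positive, $\{s<\tau_*\}\subseteq\{s<\widehat\tau_*\}$ with $\widehat\tau_*:=\inf\{u:u(t+u,\widehat X_{t+u},y)=c_0\}$ by monotonicity of $u$ in $x$ (Corollary \ref{cor:reg-u}), and $\exp(\int_0^s\partial_x a\,\ud u)\le\e^{\bar a T}$, Fubini and the change of measure yield
\begin{equation}\label{eq:change-plan}
\partial_x u(t,x,y)\ \le\ \e^{\bar a T}\,\E^{\Q}_{t,x}\!\left[\int_0^{\widehat\tau_*}\!\!e^{-rs}\partial_{xy}f(\widehat X_{t+s},y)\,\ud s\right]\ =\ \e^{\bar a T}\,\E_{t,x}\!\left[\int_0^{\tau_*}\!\!e^{-rs}\partial_{xy}f(X^*_{t+s},y)\,\ud s\right],
\end{equation}
the last equality because $(\widehat X,\widehat\tau_*)$ under $\Q$ has the law of $(X^*,\tau_*)$ under $\P$.

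Finally, under condition (ii) bound $\partial_{xy}f\le\gamma|\partial_{yy}f|$ inside the right-hand side of \eqref{eq:change-plan} and use \eqref{eq:rep-plan} to get $\partial_x u\le-\gamma\e^{\bar a T}\partial_y u$, i.e.\ \eqref{eq:dir-ineq-plan} with $\theta_c=\gamma\e^{\bar a T}$; under condition (i), $|\partial_{yy}f|\ge\alpha$ forces $1+|\partial_{yy}f|\le\frac{1+\alpha}{\alpha}|\partial_{yy}f|$, hence $|\partial_{xy}f|\le\gamma\frac{1+\alpha}{\alpha}|\partial_{yy}f|$ and the same conclusion holds with $\theta_c=\gamma\frac{1+\alpha}{\alpha}\e^{\bar a T}$. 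Together with the first paragraph this proves Assumption \ref{ass:Lip-c}. I expect the main obstacle to be the rigorous derivation of \eqref{eq:rep-plan} — differentiating the optimal-stopping value under the expectation and showing that the contribution of the path-dependent $\tau_*$ vanishes — for which the smoothness $f\in C^2$, the explicit control of the flow derivative $Z$ in \eqref{eq:representation_of_Z}, and the integrability conditions in Assumptions \ref{ass:Z}--\ref{ass:f3} are all needed; the change-of-measure step in \eqref{eq:change-plan} additionally relies on the true-martingale property of $M$, which is precisely what Assumption \ref{ass:Z} secures.
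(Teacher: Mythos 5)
Your proposal is correct and follows the same backbone as the paper's proof: the probabilistic gradient representations for $\partial_y u$ and $\partial_x u$ (via differentiability of the flow $x\mapsto X^{*;t,x}$, its derivative $Z^{t,x}$, and a.s.\ continuity of $\tau_*$ from Lemma \ref{lem:ST}), the Girsanov change of measure to eliminate $Z$, the comparison argument keyed on $\partial_x\sigma\ge 0$, and finally the pointwise bounds $|\partial_{xy}f|\le \gamma|\partial_{yy}f|$ (or the $\alpha$-version) to get a ratio bounded by $\theta_c=\gamma\e^{\bar aT}$ (resp.\ $\gamma\frac{1+\alpha}{\alpha}\e^{\bar a T}$), matching the paper's constants. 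The one genuinely different ingredient is the last step: the paper introduces the $\delta$-level boundary $c_\delta(t,x):=\inf\{y:u(t,x,y)<c_0-\delta\}$ and applies the implicit function theorem to get $\partial_x c_\delta = -\partial_x u/\partial_y u$, using that $\tau_*>0$ and hence $\partial_y u<0$ strictly at $(t,x,c_\delta(t,x))$, then lets $\delta\downarrow 0$. You instead derive the directional inequality $\partial_x u+\theta_c\partial_y u\le 0$ on $\cC$ and propagate it along line segments to show the boundary cannot rise faster than slope $\theta_c$. Both devices are used precisely to sidestep degeneracy of the IFT at $\partial\cC$, and both are standard; yours is arguably cleaner in that it never needs $\partial_y u\ne 0$, only the sign of the combination. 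You also run the change of measure ``from the other side'' (comparing under $\Q$ against a process $\widehat X$ with the original drift, rather than comparing under $\P$ against a process $\bar X$ with the $\Q$-drift as in the paper), which is an equivalent repackaging of the same Girsanov step, and your inequality chain $\int_0^{\tau_*}\partial_{xy}f(X^*,\cdot)\le\int_0^{\tau_*}\partial_{xy}f(\widehat X,\cdot)\le\int_0^{\widehat\tau_*}\partial_{xy}f(\widehat X,\cdot)$ is correct given $\widehat X\le X^*$, $\tau_*\le\widehat\tau_*$, $\partial_{xy}f>0$ and $x\mapsto\partial_{xy}f$ non-increasing.

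One small imprecision: you justify $C^1$-regularity of $u$ on $\cC$ by saying $u$ solves a ``locally uniformly parabolic linear PDE'' there. That is not immediate, because the drift $a(x,m^*(t))$ is only right-continuous (indeed BV) in $t$, and the paper has to work through a mollification and $W^{1,2;p}_{\mathrm{loc}}$ estimates (Lemma \ref{lem:app}) to get Sobolev regularity. In fact the cleaner route — which you yourself already have — is to read $C^1$-regularity of $u$ directly off the probabilistic representations \eqref{eq:rep-plan} together with a.s.\ continuity of $(t,x,y)\mapsto\tau_*$ and of the flows; that is exactly what the paper's Step~1 does, and it is all the segment argument needs.
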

The proof of the proposition uses the next lemma concerning the optimal stopping time defined in \eqref{eq:tau*}. We move its slightly technical proof to the appendix. 
\begin{lemma}\label{lem:ST}
The mapping $(t,x,y)\mapsto \tau_*(t,x,y)$ is $\P$-almost surely continuous on $[0,T]\times\Sigma^\circ$ with $\tau_*(t,x,y)=0$, $\P$-a.s.~for $(t,x,y)\in\partial\cC$.
\end{lemma}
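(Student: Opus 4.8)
The plan is to treat the behaviour of $\tau_*$ on $\partial\cC$ separately (it is immediate) and to obtain the $\P$-a.s.\ continuity of $(t,x,y)\mapsto\tau_*(t,x,y)$ from a soft hitting-time argument supplemented by one genuinely technical input on the geometry of the stopping region. For the boundary claim, note that $\cS=([0,T]\times\Sigma)\setminus\cC=\{u=c_0\}$ is closed, being the level set of the continuous function $u$ (Corollary \ref{cor:reg-u}); hence $\partial\cC\subseteq\cS$, so $u=c_0$ on $\partial\cC$, and since $\widetilde X^{t,x}_{t}=x$ the value $s=0$ attains the infimum in \eqref{eq:tau*}, giving $\tau_*(t,x,y)=0$ identically there. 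For the continuity statement, fix the event of full probability on which the stochastic flow $(t,x)\mapsto\widetilde X^{t,x}$ is jointly continuous as a random field (this holds under Assumption \ref{ass:SDE} by Kolmogorov--Chentsov, exactly as for the fields $X^{[n]}$, because $\widetilde X$ solves \eqref{limit-SDE} with a bounded measurable drift). Writing $Z^{t,x,y}_s:=(t+s,\widetilde X^{t,x}_{t+s},y)$, one has $\tau_*(t,x,y)=\inf\{s\in[0,T-t]:Z^{t,x,y}_s\in\cS\}$, the first hitting time of the closed set $\cS$; along any sequence $(t_n,x_n,y_n)\to(t,x,y)$ in $[0,T]\times\Sigma^\circ$ the paths $Z^{t_n,x_n,y_n}$ converge to $Z^{t,x,y}$ uniformly on compacts. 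Lower semicontinuity is then routine: passing to a subsequence with $\tau_*^n:=\tau_*(t_n,x_n,y_n)\to\tau_\infty$, closedness of $\cS$ gives $Z^{t,x,y}_{\tau_\infty}=\lim_nZ^{t_n,x_n,y_n}_{\tau_*^n}\in\cS$, whence $\tau_*(t,x,y)\le\tau_\infty$ and therefore $\tau_*(t,x,y)\le\liminf_n\tau_*^n$.

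Upper semicontinuity is the crux, and its proof passes through the claim that $\widetilde X$ penetrates the \emph{interior} of the stopping region immediately after $\tau_*$, i.e.\ that $\P$-a.s.
\begin{equation}\label{usc}
\tau_*(t,x,y)=\rho_*(t,x,y):=\inf\{s\in[0,T-t]:Z^{t,x,y}_s\in\mathrm{int}\,\cS\}\qquad\text{for all }(t,x,y)\in[0,T]\times\Sigma^\circ.
\end{equation}
Granting \eqref{usc}: since $\mathrm{int}\,\cS$ is open, $\rho_*$ is an upper semicontinuous function of the starting point (hitting times of open sets), so, using $\tau_*^n\le\rho_*(t_n,x_n,y_n)$ (because $\mathrm{int}\,\cS\subseteq\cS$) together with \eqref{usc} at the limit point, $\limsup_n\tau_*^n\le\limsup_n\rho_*(t_n,x_n,y_n)\le\rho_*(t,x,y)=\tau_*(t,x,y)$. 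Combined with lower semicontinuity this forces $\tau_*^n\to\tau_*(t,x,y)$ along the original (arbitrary) sequence, which is the asserted joint continuity. As a by-product, \eqref{usc} also identifies $\tau_*$ with the \emph{largest} optimal stopping time --- once in $\mathrm{int}\,\cS$ one has $\partial_yf>rc_0$ Lebesgue-a.e., since $\partial_{xy}f>0$ makes $\{\partial_yf=rc_0\}$ a graph, so continuing is strictly costly --- hence the optimal stopping time is unique, as anticipated after Proposition \ref{prop:OSboundary}.

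To prove \eqref{usc} I would first upgrade the regularity of the optimal boundary beyond the upper semicontinuity furnished by Lemma \ref{lem:limitOS}: using the strict sign condition $\partial_{xy}f>0$ (Assumption \ref{ass:f}-(iii)), the strict concavity of $f(x,\cdot)$ (Assumption \ref{ass:f}-(ii)) and the sub/supermartingale structure attached to $u$ (the analogue of \eqref{eq:mart}), together where needed with the extra smoothness of Assumptions \ref{ass:dX}--\ref{ass:f3}, one shows that $x\mapsto c(t,x)$ is \emph{strictly} increasing wherever $c(t,x)\in(0,1)$ and that $c$ is continuous; consequently $\mathrm{int}\,\cS=\{(t,x,y):y<c(t,x)\}$ near every point of $\partial\cC$ whose $y$-coordinate lies in $(0,1)$ --- which is why the lemma is stated on $\Sigma^\circ$. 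Then on $\{\tau_*<T-t\}$ path continuity places $Z^{t,x,y}_{\tau_*}$ on $\partial\cC$, i.e.\ $\widetilde X^{t,x}_{t+\tau_*}=b(t+\tau_*,y)$ with $b(s,y):=\inf\{x':c(s,x')\ge y\}$; now non-degeneracy $\sigma>0$ prevents $\widetilde X^{t,x}$ from lingering at the level $b(t+\tau_*,y)$, so $\widetilde X^{t,x}_{t+\tau_*+\cdot}$ strictly exceeds it along a sequence $s_k\downarrow0$, and then strict $x$-monotonicity and $t$-monotonicity of $c$ give $c(t+\tau_*+s_k,\widetilde X^{t,x}_{t+\tau_*+s_k})>c(t+\tau_*,b(t+\tau_*,y))\ge y$, i.e.\ $Z^{t,x,y}_{\tau_*+s_k}\in\mathrm{int}\,\cS$; letting $k\to\infty$ yields $\rho_*\le\tau_*$, while $\rho_*\ge\tau_*$ is trivial (on $\{\tau_*=T-t\}$ there is nothing to prove). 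A point requiring care is to run this pathwise argument off a \emph{single} null set uniformly in $(t,x,y)$ rather than one triple at a time; this is handled by exploiting the pathwise regularity of Brownian trajectories (local extrema of a Brownian motion form a countable set, a.s.\ at pairwise distinct levels) together with the monotonicity of $\tau_*$ in its arguments.

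The genuinely delicate step --- and the main obstacle --- is the boundary-regularity input to \eqref{usc}. With $c$ merely upper semicontinuous and monotone, a flat stretch of $c(t,\cdot)$ or a downward jump of $c(\cdot,x)$ would allow the controlled state to remain on $\partial\cC$ for a positive amount of time without ever entering $\mathrm{int}\,\cS$, which would break \eqref{usc}; it is precisely in excluding such pockets that the strict structural assumptions on $f$ are used. Everything else is the standard hitting-time machinery sketched above, in the spirit of \cite{deangelisStabile}.
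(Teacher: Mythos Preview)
Your overall architecture matches the paper's: both split into lower and upper semicontinuity, both obtain the latter by proving that the first hitting time of $\cS$ equals a.s.\ the first hitting time of $\mathrm{int}\,\cS$ (your $\rho_*$, the paper's $\sigma^\circ_*$), both reduce via the strong Markov property to $\P_{t,x,y}(\rho_*=0)=1$ on $\partial\cC$, and both finish by non-degeneracy of the diffusion plus a geometric description of $\cS^\circ$. Your lower-semicontinuity argument via closed sets is equivalent to the paper's, which works through the continuous value function directly.

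The substantive difference is in the boundary-geometry input. You propose to prove that $c$ is continuous and $x\mapsto c(t,x)$ strictly increasing on $\{0<c<1\}$, allowing yourself Assumptions \ref{ass:dX}--\ref{ass:f3}, and then read off $\mathrm{int}\,\cS=\{y<c(t,x)\}$. The paper never establishes continuity of $c$. Instead it works with the left-continuous inverse $b(t,y):=\sup\{x:c(t,x)<y\}$, observes $t\mapsto b(t,y)$ is non-increasing, and proves only the weaker claim $\cS^\circ=\{x>b(t,y)\}$ on $(0,T)\times\Sigma^\circ$ (the separate Lemma~\ref{lem:app}); the law of iterated logarithm then yields immediate entry into $\cS^\circ$ from any boundary point. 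The only obstruction to that claim is a jump $b(t_0,y_0)<b(t_0,y_0+)$, equivalently a flat stretch of $c(t_0,\cdot)$ --- exactly the pathology you flag --- and the paper rules it out by a PDE argument rather than a martingale one: mollify the time-dependent drift, obtain $u(\cdot,y)\in W^{1,2;p}_{loc}(\cC)$ via Feynman--Kac and interior estimates, pass to the weak limit in the parabolic equation, test against $\varphi\ge 0$ supported on the putative flat segment, and let $y\downarrow y_0$ to force $\partial_y f(\cdot,y_0)-rc_0\le 0$ on an open $x$-interval contained in $\Sigma\setminus\cH$, contradicting $\partial_{xy}f>0$. This uses only Assumptions \ref{ass:SDE}--\ref{ass:int}; your outlined route aims for a stronger conclusion than is needed and invokes hypotheses the lemma does not actually require. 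So your plan is sound and you correctly located the crux, but the paper's PDE device for the no-flat-stretch claim is both sharper and more economical than the full regularity of $c$ you set out to establish.
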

\begin{proof}[Proof of Proposition \ref{prop:Lip}]
The proof combines ideas from \cite{deangelisStabile} and \cite{deangelisPeskir}. First, for $\delta>0$ we define
\[
c_\delta(t,x):=\inf\{y\in[0,1]: u(t,x,y)<c_0-\delta\}
\]
with $\inf\varnothing=1$. Then it is clear that $c_\delta(\,\cdot\,)>c_{\delta'}(\,\cdot\,)>c(\,\cdot\,)$ for all $0<\delta'<\delta$ by monotonicity of $y\mapsto u(t,x,y)$. Since $u$ is continuous then
\[
\lim_{\delta\downarrow 0}c_\delta(t,x)=c(t,x)\quad(t,x)\in[0,T]\times\R
\]
and if we can prove that $x\mapsto c_\delta(t,x)$ is Lipschitz with a constant independent of $\delta$ we can conclude. By continuity of $u$ we know that 
\[
u\big(t,x,c_\delta(t,x)\big)=c_0-\delta
\]
so that by the implicit function theorem, whose use is justified in step 1 below, we have
\begin{align}\label{eq:IFth}
\partial_x c_\delta(t,x)=-\frac{\partial_{x}u\big(t,x,c_\delta(t,x)\big)}{\partial_y u\big(t,x,c_\delta(t,x)\big)},\quad(t,x)\in[0,T]\times\R.
\end{align}
Thanks to Corollary \ref{cor:reg-u} we have $\partial_x c_\delta(t,x)\ge 0$. In step 2 below we will find an upper bound so that $0\le \partial_x c_\delta\le \theta_c$ on $[0,T]\times\R$, for a suitable constant $\theta_c>0$. This concludes the proof.
\vspace{+4pt}

{\em Step 1:} (Gradient estimates). We fix an arbitrary $(t,x,y)\in[0,T]\times\Sigma^\circ$ and let $\tau_*=\tau_*(t,x,y)$. Then for any small $\eps>0$ we have
\begin{align*}
&u(t,x,y+\eps)-u(t,x,y)\\
&\le \E\left[\int_0^{\tau_*}e^{-rs}\Big(\partial_y f(X^{*;\,t,x}_{t+s},y+\eps)-\partial_y f(X^{*;\,t,x}_{t+s},y)\Big)\ud s\right]\\
&=\int_0^\eps\E\left[\int_0^{\tau_*}e^{-rs}\partial_{yy} f(X^{*;\,t,x}_{t+s},y+z)\ud s\right]\ud z,
\end{align*}
where we used Fubini's theorem in the final equality. Dividing by $\eps$, letting $\eps\to 0$ and using the integrability condition from Assumption \ref{ass:f3} we conclude
\[
\limsup_{\eps\to 0}\frac{u(t,x,y+\eps)-u(t,x,y)}{\eps}\le \E\left[\int_0^{\tau_*}e^{-rs}\partial_{yy} f(X^{*;\,t,x}_{t+s},y)\ud s\right].
\]
Taking $\tau^\eps_*:=\tau_*(t,x,y+\eps)$ in the first expression above we have
\begin{align*}
&u(t,x,y+\eps)-u(t,x,y)\\
&\ge \E\left[\int_0^{\tau^\eps_*}e^{-rs}\Big(\partial_y f(X^{*;\,t,x}_{t+s},y+\eps)-\partial_y f(X^{*;\,t,x}_{t+s},y)\Big)\ud s\right]\\
&=\int_0^\eps\E\left[\int_0^{\tau^\eps_*}e^{-rs}\partial_{yy} f(X^{*;\,t,x}_{t+s},y+z)\ud s\right]\ud z.
\end{align*}
Dividing again by $\eps>0$ and letting $\eps\to 0$, we can now invoke Lemma \ref{lem:ST} to justify that $\tau^\eps_*\to\tau_*$ and obtain
\[
\liminf_{\eps\to 0}\frac{u(t,x,y+\eps)-u(t,x,y)}{\eps}\ge \E\left[\int_0^{\tau_*}e^{-rs}\partial_{yy} f(X^{*;\,t,x}_{t+s},y)\ud s\right].
\]

So, in conclusion we have shown that $\partial_y u$ exists in $[0,T]\times\Sigma^\circ$ and it reads
\[
\partial_y u(t,x,y)=\E\left[\int_0^{\tau_*}e^{-rs}\partial_{yy} f(X^{*;\,t,x}_{t+s},y)\ud s\right].
\]
Further, in light of the fact that $(t,x,y)\mapsto\tau_*(t,x,y)$ and $(t,x,y)\mapsto (X^{*;\,t,x}_{t+s},y)$ are $\P$-a.s.~continuous, we deduce that $\partial_y u$ is also continuous on $[0,T]\times\Sigma^\circ$, by dominated convergence and Assumption \ref{ass:f3}. Finally, since $\partial_{yy}f<0$ (Assumption \ref{ass:f}-(ii)), we have that 
\begin{align}\label{eq:uy<0}
\partial_{y} u\big(t,x,c_\delta(t,x)\big)<0,\quad \text{for all $(t,x)\in[0,T)\times \R$,}
\end{align}
for which $\big(t,x,c_\delta(t,x)\big)\in[0,T)\times\Sigma^\circ$, since $\tau_*>0$ at those points.

Next we obtain a similar result for $\partial_x u$. With the same notation as above we have
\begin{align*}
&u(t,x+\eps,y)-u(t,x,y)\\
&\le 
\E\left[\int_0^{\tau_*}e^{-r s}\Big(\partial_y f(X^{*;\,t,x+\eps}_{t+s},y)-\partial_y f(X^{*;\,t,x}_{t+s},y)\Big)\ud s\right]\\
&=\int_x^{x+\eps}\E\left[\int_0^{\tau_*}e^{-r s}\partial_{xy} f(X^{*;\,t,\eta}_{t+s},y)Z^{t,\eta}_{t+s}\ud s\right]\ud \eta.
\end{align*}
Dividing by $\eps$ and letting $\eps\to 0$, we use dominated convergence (Assumption \ref{ass:f3}) and continuity of the flows $x\mapsto\big(X^{*;\,t,x},Z^{t,x}\big)$ to conclude 
\[
\limsup_{\eps\to 0}\frac{u(t,x+\eps,y)-u(t,x,y)}{\eps}\le \E\left[\int_0^{\tau_*}e^{-r s}\partial_{xy} f(X^{*;\,t,x}_{t+s},y)Z^{t,x}_{t+s}\ud s\right].
\]
By a symmetric argument and continuity of the optimal stopping time we also obtain the reverse inequality and therefore conclude
\[
\partial_x u(t,x,y)= \E\left[\int_0^{\tau_*}e^{-r s}\partial_{xy} f(X^{*;\,t,x}_{t+s},y)Z^{t,x}_{t+s}\ud s\right].
\]
Also in this case continuity of $(t,x,y)\mapsto \tau_*(t,x,y)$, due to Lemma \ref{lem:ST}, and $(t,x)\mapsto\big(X^{*;\,t,x},Z^{t,x}\big)$, combined with dominated convergence, imply that $\partial_x u$ is continuous on $[0,T]\times \Sigma^\circ$.

Since $\partial_yu$ and $\partial_xu$ are continuous and \eqref{eq:uy<0} holds, the equation in \eqref{eq:IFth} is fully justified as an application of the implicit function theorem.
\vspace{+4pt}

{\em Step 2:} (Bound on $\partial_x c_\delta$). In this step we use the probabilistic representations of $\partial_x u$ and $\partial_y u$ to obtain an upper bound on $\partial_x c_\delta$. First we recall the change of measure induced by $Z$ (see \eqref{eq:Q}) and we use it to write
\[
\partial_x u(t,x,y)= \E^{\Q}_{t,x}\left[\int_0^{\tau_*}e^{-r s+\int_0^s a(X^*_{t+u},m^*(t+u))\ud u}\partial_{xy} f(X^{*}_{t+s},y)\ud s\right].
\]
We want to find an upper bound for $\partial_x u$ in terms of the process under the original measure $\P$.
Under the measure $\Q$ we have, by Girsanov theorem, that $X^*$ evolves according to
\[
\ud X^*_{t+s}=\big[a(X^*_{t+s},m^*(t+s))+\sigma(X^*_{t+s})\partial_x\sigma(X^*_{t+s})\big]\ud s+\sigma(X^*_{t+s})\ud W^\Q_{t+s},
\]
where $W^\Q_{t+s}=W_{t+s}-\int_0^s\partial_x\sigma(X^*_{t+u})\ud u $ defines a Brownian motion under $\Q$.
Analogously, under the original measure $\P$ we can define a process $\bar X$ with the same dynamics, i.e.,
\[
\ud \bar X_{t+s}=\big[a(\bar X_{t+s},m^*(t+s))+\sigma(\bar X_{t+s})\partial_x\sigma(\bar X_{t+s})\big]\ud s+\sigma(\bar X_{t+s})\ud W_{t+s},
\]
and denote
\[
\bar\tau_*:=\inf\{s\in[0,T-t]: c(t+s,\bar X_{t+s})\ge y\}.
\]
Then we have that the processes and stopping times are equal in law, i.e.
\[
\text{Law}^\Q\big(X^*,\tau_*\big)=\text{Law}^\P\big(\bar X,\bar \tau_*\big)
\]
and we can express $\partial_x u$ in terms of the original measure as
\begin{align}\label{eq:lip-p}
\partial_x u(t,x,y)= \E_{t,x}\left[\int_0^{\bar \tau_*}e^{-r s+\int_0^s a(\bar X_{t+u},m^*(t+u))\ud u}\partial_{xy} f(\bar X_{t+s},y)\ud s\right].
\end{align}

Let us first consider $x\mapsto \sigma(x)$ not constant. By comparison principles we have $\bar X\ge X^*$, $\P$-a.s., since  $\partial_x \sigma\ge 0$ (Assumption \ref{ass:dX}). Therefore $\partial_{xy} f(\bar X,y)\le \partial_{xy} f(X^*,y)$, $\P$-a.s., by Assumption \ref{ass:f3}. Since $x\mapsto c(t,x)$ is non-decreasing as pointwise limit of non-decreasing functions (recall Proposition \ref{prop:OSboundary}), then $c(t+s,\bar X_{t+s})\ge c(t+s,X^*_{t+s})$, hence implying $\bar \tau_*\le \tau_*$, $\P$-a.s. Recalling that $\partial_{xy}f>0$ from Assumption \ref{ass:f} and combining these few facts we have
\[
\partial_x u(t,x,y)\le e^{\bar a(T-t)}\E_{t,x}\left[\int_0^{\tau_*}e^{-r s}\partial_{xy} f(X^*_{t+s},y)\ud s\right],
\]
where we also used $\partial_x a\le \bar a$ (Assumption \ref{ass:dX}). If instead $\sigma(x)=\sigma$ is constant then $X^*=\bar X$ by uniqueness of the SDE and therefore the above estimate follows directly from \eqref{eq:lip-p}. Plugging this bound into \eqref{eq:IFth} and recalling that $\partial_{yy}f<0$ we obtain
\[
0\le \partial_xc_\delta(t,x)\le e^{\bar a(T-t)}\frac{\E_{t,x}\left[\int_0^{\tau_*}e^{-r s}\partial_{xy} f\big(X^*_{t+s},c_\delta(t,x)\big)\ud s\right]}{\E_{t,x}\left[\int_0^{\tau_*}e^{-r s}\big|\partial_{yy} f\big(X^*_{t+s},c_\delta(t,x)\big)\big|\ud s\right]}.
\]
Now, if condition (i) holds we obtain
\[
0\le \partial_xc_\delta(t,x)\le e^{\bar a(T-t)}\left(\frac{\gamma}{\alpha}+\gamma\right), \quad\text{for all $(t,x)\in[0,T]\times\R$ and any $\delta>0$,}
\]
whereas if condition (ii) holds we obtain
\[
0\le \partial_xc_\delta(t,x)\le e^{\bar a(T-t)}\gamma, \quad\text{for all $(t,x)\in[0,T]\times\R$ and any $\delta>0$}.
\]
So in the first case Assumption \ref{ass:Lip-c} holds with 
\[
\theta_c=e^{\bar a T}\left(\frac{\gamma}{\alpha}+\gamma\right),
\]
and in the second case with $\theta_c=\gamma \exp (\bar a T)$.
\end{proof}
Next we provide a couple of examples meeting the requirements of Proposition \ref{prop:Lip}.

\begin{example}[Ornstein-Uhlenbeck dynamics with exponential-Cobb-Douglas profit]
Let $a(x,m):= \alpha (m-x)$ for some $\alpha>0$ and $\sigma(x)\equiv \sigma$ for some $\sigma>0$. Given a Borel function $m:[0,T]\to[0,1]$ the dynamics of $X$ from \eqref{eq:dynamics_mfg_01} reads 
\begin{equation}
X_t=X_0+\int_0^t\alpha (m(s)-X_s)\ud s+\sigma W_t,\quad t\in[0,T].
\end{equation}
Let $f(x,y):= e^xy^\beta$ for some $\beta\in(0,1)$ and for all $(x,y)\in\Sigma$. Finally assume that $\E[\exp (qX_0)]<\infty$ for some $q\ge 1$.

We check the assumptions of Proposition \ref{prop:Lip}. Assumptions \ref{ass:SDE} and Assumption \ref{ass:dX} on the coefficients of the SDE are trivially satisfied.
The profit function $f$ has the monotonicity required by Assumption \ref{ass:f}-(i) and it is strictly concave (Assumption \ref{ass:f}-(ii)).  Also, $\partial_{xy}f(x,y)= \beta e^xy^{\beta-1}>0$ (Assumption \ref{ass:f}-(iii)) and \eqref{ass:partial-y-f} is satisfied since
\begin{eqnarray*}
\lim_{x\to -\infty} \frac{\beta e^x}{y^{1-\beta}}=0< rc_0<\lim_{x\to \infty} \frac{\beta e^x}{y^{1-\beta}}=+\infty
\end{eqnarray*}
for any $y\in(0,1]$ fixed. The integrability Assumption \ref{ass:int} is satisfied by the Ornstein-Uhlenbeck dynamics with initial condition as above. Assumption \ref{ass:Z} is trivially satisfied since $\partial_x \sigma\equiv 0$. Assumption \ref{ass:f3} holds because $\sigma$ is constant and the integrability condition is not difficult to check.
Finally Assumption (ii) in Proposition \ref{prop:Lip} is satisfied with any $\gamma\ge \frac{1}{1-\beta}$ since
\begin{equation*}
|\partial_{xy}f(x,y)|=\frac{\beta e^x}{y^{1-\beta}}\quad\text{and}\quad
|\partial_{yy}f(x,y)|=\frac{\beta(1-\beta)e^x}{y^{2-\beta}}.
\nonumber
\end{equation*}
We also notice that Assumption \ref{ass:f2} holds so that Theorem \ref{teo:approximation} can be applied.
\end{example}

\begin{example}[GBM dynamics with linear-Cobb-Douglas profit]
Let $a(x,m):= \alpha m x $ for some $\alpha>0$ and $\sigma(x):=\sigma x$ for some $\sigma>0$. Given a Borel function $m:[0,T]\to[0,1]$, the dynamics of $X$ from \eqref{eq:dynamics_mfg_01} reads 
\begin{equation}
X_t=X_0+\int_0^t\alpha X_s m(s)\ud s+\int_0^t\sigma X_s \ud W_s,\quad t\in[0,T].
\end{equation}
Non-negativity of the trajectories reduces the state space $\Sigma$ to $(0,\infty)\times[0,1]$ (see Remark \ref{rem:statespace}). Let $f(x,y):= (1+x)(1+y)^\beta$ for some $\beta\in(0,1)$ and for all $(x,y)\in\Sigma$. Finally assume that $\nu\in\mathcal{P}_2(\Sigma)$ and that $rc_0>\beta$.

Let us check the assumptions of Proposition \ref{prop:Lip}. Assumptions \ref{ass:SDE} and Assumption \ref{ass:dX} on the coefficients of the SDE are trivially satisfied. The profit function $f$ has the monotonicity required by Assumption \ref{ass:f}-(i) and it is strictly concave (Assumption \ref{ass:f}-(ii)). Also, $\partial_{xy}f(x,y)= \beta(1+y)^{\beta-1}>0$ (Assumption \ref{ass:f}-(iii)) and Eq.\ \eqref{ass:partial-y-f} is satisfied since
\begin{eqnarray*}
\lim_{x\to 0} \frac{\beta(1+x)}{(1+y)^{1-\beta}}=\frac{\beta}{(1+y)^{1-\beta}}\le\beta< rc_0<\lim_{x\to \infty} \frac{\beta(1+x)}{(1+y)^{1-\beta}}=+\infty
\end{eqnarray*}
for any $y\in[0,1]$ fixed. The integrability Assumption \ref{ass:int} is satisfied with $p=2$ (or higher provided the initial condition has finite $p$-th moment) thanks to sub-linearity of the logarithm and standard estimates on the GBM dynamics. Assumption \ref{ass:Z} is another integrability assumption that reduces to finiteness of the second moment of the exponential martingale $Z$ (which is satisfied by boundedness of $\partial_x a$ and $\partial_x\sigma$).
Assumption \ref{ass:f3} holds because $x\mapsto\partial_{xy}f(x,y)$ is non-increasing.  Finally Assumption (ii) in Proposition \ref{prop:Lip} is satisfied with any $\gamma\ge \frac{2}{1-\beta}$ since
\begin{equation*}
|\partial_{xy}f(x,y)|=\frac{\beta}{(1+y)^{1-\beta}}\quad\text{and}\quad
|\partial_{yy}f(x,y)|=\frac{\beta(1-\beta)(1+x)}{(1+y)^{2-\beta}}.
\nonumber
\end{equation*}
We also notice that Assumption \ref{ass:f2} holds so that Theorem \ref{teo:approximation} can be applied.
\end{example}

We would like to emphasise that the conditions of Proposition \ref{prop:Lip} are far from being necessary. While it would be overly complicated to state a general theorem in this sense, we provide below an example with a clear economic interpretation for which Proposition \ref{prop:Lip} is not directly applicable.

\subsection{A model with Geometric Brownian Motion}\label{subsec:gbm}
Let us assume that $a(x,m)=(\mu + m)x$ and $\sigma(x)=\sigma\,x$ for some $\mu\in\R$ and $\sigma\in\R_+$. Let us also assume that $f(x,y)= x g(y)$ with $g\in C^2([0,1])$, $g> 0$, strictly concave and strictly increasing. This specification corresponds to the classical model of the goodwill problem in which firms produce a good whose price evolves as a geometric Brownian motion and revenues are linear in the price of the good and increasing and concave in the amount of investment that goes towards advertising.

On the one hand, Assumptions \ref{ass:SDE}--\ref{ass:int} are easily verified  and Theorem \ref{teo:existenceSolMFG} holds (i.e., our construction of the solution to the MFG holds). On the other hand, neither (i) or (ii) in Proposition \ref{prop:Lip} hold, so we cannot apply directly the result on Lipschitz continuity of the boundary which is needed for the approximation result in Theorem \ref{teo:approximation}. However, we shall now see how an alternative argument of proof can be applied to prove that Assumption \ref{ass:Lip-c} remains valid.

First of all we change our coordinates by letting $\psi:=\ln x$, so that the value function of the optimal stopping problem can be written as
\[
\widetilde u(t,\psi,y):=u(t,e^\psi,y)=\inf_{\tau\in\T_t}\E_{t,\psi}\left[\int_0^\tau e^{-rs}g'(y)e^{\Psi_{t+s}}\ud s+e^{-r\tau}c_0\right],
\]
where $\Psi_{t+s}:=\ln X^*_{t+s}$ is just a Brownian motion with drift, i.e., 
\[
\Psi^{t,\psi}_{t+s}=\psi+\int_0^s\big(\mu-\sigma^2/2+m^*(t+u)\big)\ud u+\sigma (W_{t+s}-W_t)
\]
The optimal boundary can also be expressed in terms of $(t,\psi)$ by putting $\widetilde c(t,\psi)=c(t,e^\psi)$. Then the mean-field optimal control reads
\[
\xi^*_{t}=\sup_{0\le s\le t}\Big(\widetilde c(s,\Psi_s)-y\Big)^+,\quad t\in[0,T]
\]
whereas the optimal stopping time for the value $\widetilde u (t,\psi,y)$ reads
\[
\tau_*=\inf\{s\in[0,T-t]:\, \widetilde c\,(t+s,\Psi_{t+s})\ge y\}.
\]
Now we show that the optimal boundary $\widetilde c(\,\cdot\,)$ is indeed Lipschitz with respect to $\psi$ and therefore the proof of Theorem \ref{teo:approximation} can be repeated with $\Psi$ instead of $X^*$ so that the theorem holds as stated. Since $\partial_\psi \Psi^{t,\psi}_{t+s}\equiv 1$ for $s\in[0,T-t]$ and Assumption \ref{ass:f3} holds, we can use the same arguments as in step 1 of the proof of Proposition \ref{prop:Lip} to obtain
\[
\partial_y \widetilde u(t,\psi,y)=g''(y)\E_{t,\psi}\left[\int_0^{\tau_*}e^{-rs +\Psi_{t+s}}\ud s\right]
\]
and
\[
\partial_\psi \widetilde u(t,\psi,y)=g'(y)\E_{t,\psi}\left[\int_0^{\tau_*}e^{-rs +\Psi_{t+s}}\ud s\right].
\]
Then, by the same arguments as in step 2 of the proof of Proposition \ref{prop:Lip} we obtain
\begin{align}\label{eq:ctilde}
\partial_\psi \widetilde c_\delta(t,\psi)=-\frac{\partial_x\widetilde w\big(t,\psi,\widetilde c_\delta(t,\psi)\big)}{\partial_y\widetilde w\big(t,\psi,\widetilde c_\delta(t,\psi)\big)}=\frac{g'\big(\widetilde c_\delta(t,\psi)\big)}{|g''\big(\widetilde c_\delta(t,\psi)\big)|}\le \kappa, 
\end{align}
for some $\kappa>0$, where the final inequality holds because $g\in C^2([0,1])$ and strictly concave. Therefore for the optimal boundary we have
\[
\sup_{0\le t\le T}|\widetilde c(t,\psi_1)-\widetilde c(t,\psi_2)|\le \kappa|\psi_1-\psi_2|,\quad\psi_1,\psi_2\in\R,
\]
as needed. In conclusion, the result of Theorem \ref{teo:approximation} remains valid, even though the optimal boundary in the original parameterisation of the problem is not uniformly Lipschitz.

\section{Appendix}
In this appendix we collect a number of technical results used in the paper.
\vspace{+3pt}

\noindent{\bf Proof of Proposition \ref{th:theoremConnection}}.
Fix $(t,x,y) \in [0,T] \times \Sigma$. Take any admissible control $\zeta\in\Xi_{t,x}(y)$ and define, for $q \geq 0$, its right-continuous inverse (see, e.g., \cite[Ch.~0, Sec.~4]{revuz2013continuous}) $\tau^{\zeta}(q):=\inf\{s \in [t,T): \,\,\zeta_{s} > q\} \wedge T$. The process $\tau^{\zeta} :=(\tau^{\zeta}(q))_{q \geq 0}$ has increasing right-continuous sample paths, hence it admits left limits $\tau_{-}^{\zeta}(q):=\inf\left\{s \in [t,T): \,\zeta_{s} \geq q\right\} \wedge T$, for $q \geq 0$. It can be shown that both $\tau^{\zeta}(q)$ and $\tau^{\zeta}_{-}(q)$ are $(\mathcal{F}_{t+s})$-stopping times for any $q \geq 0$.

Let now $q = z - y$ for $z \geq y$ and consider the function $w$ defined as 
\begin{align}
w(t, x,y)&:=\Phi_n(t, x)  -\int_{y}^{1} u_n(t, x, z)\,\ud z.
\end{align}
Since $\tau^\zeta(z-y)$ is admissible for $u_n(t,x,z)$ we have
\begin{align*}
&w(t, x,y) - \Phi_n(t, x)
&\geq - \int_{y}^{1} \E_{t,x}\left[ c_0  e^{-r  \tau^{\zeta}(z-y)}+\int_{t}^{\tau^{\zeta}(z-y)} e^{-r s}\partial_y f(X^{[n]}_s,z)\ud s\right]\ud z.
\end{align*}
In order to compute the integral with respect to $\ud z$ we observe that for $t\le s<T$ we have 
\[
\{\zeta_{s} < z- y \}\subseteq\{ s < \tau^{\zeta}(z-y)\} \subseteq \{ \zeta_{s}\leq z - y\}
\]
by right-continuity and monotonicity of the process $s\mapsto \zeta_{s}$. The left-most and right-most events above are the same up to $\ud z$-null sets. Then, applying Fubini's theorem more than once we obtain
\begin{align*}
&w(t, x,y) - \Phi_n(t, x)  \nonumber\\
&\geq \E_{t,x}\left[ - \int_{y}^{1}  e^{-r  \tau^{\zeta}(z-y)} c_0 \,\ud z  -  \int_{t}^{T}  e^{-r s}\int_{y}^{1}\partial_y f(X^{[n]}_s, z){\mathbf 1}_{\{s<\tau^{\zeta}(z-y)\}}\,\ud z\,\ud s \right] \nonumber\\
& =  \E_{t, x}\left[- \int_{y}^{1} e^{-r  \tau^{\zeta}(z-y)} c_0 \,\ud z -   \int_{t}^{T}  e^{-r s}\int_{y}^{1}\partial_y f(X^{[n]}_s, z) {\mathbf 1}_{\left\{\zeta_{s} < z - y\right\}}\ud z\,\ud s \right] \nonumber\\
& =  \E_{t, x}\left[- \int_{y}^{1} e^{-r  \tau^{\zeta}(z-y)} c_0 \,\ud z -   \int_{t}^{T}   e^{-r s}[f(X^{[n]}_s,1) - f(X^{[n]}_s, y + \zeta_{s})]\,\ud s \right] \nonumber\\
& =  J_n(t, x, y; \zeta) - \Phi_n(t, x),
\nonumber
\end{align*}
where the final equality uses the well-known change of variable formula \citep[see, e.g.,][Ch. 0, Proposition 4.9]{revuz2013continuous}
\[
\int_{y}^{1} e^{-r  \tau^{\zeta}(z-y)}\ud z=\int_{[t,T]}e^{-rs}\ud\zeta_{s}.
\]
By the arbitrariness of $\zeta\in\Xi_{t,x}(y)$ we conclude $w_n(t, x,y)\geq v_n(t,x,y)$.

For the reverse inequality we take $\zeta_{t+s}=\xi^{[n]*}_{t+s}$ as defined in Lemma \ref{lem:SK}. Recall that 
\[
\tau^{[n]}_*(t,x,z) = \inf\big\{s \in [0,T-T]:\, z \leq c_n(t+s, X_{t+s}^{[n];\,t,x}) \big\}.
\]
and since $s\mapsto c_n(s, X_s^{[n];\,t, x})-z$ is upper semi-continuous, it attains a maximum over any compact interval in $[t,T)$. In particular, for $s\in[t,T)$
\begin{align*}
\text{$\tau^{[n]}_*(t,x,z)\leq s \iff $there exists $\theta\in[t,t+s]$ such that $c_n(\theta, X^{[n];\,t,x}_{\theta})\ge z$}.
\end{align*}
For any $y<z$, the latter is also equivalent to 
\begin{align*}
\xi^{[n]*}_{t+s}= \sup_{0\leq u \leq s}\left(c_n(t+u, X^{[n];\,t,x}_{t+u})- y\right)^{+}  \geq z-y
\end{align*}
and, therefore, it is also equivalent to $\tau_{-}^{\xi^{[n]*}}(z-y) \leq s$. Since $s\in[t,T]$ was arbitrary the chain of equivalences implies that $\tau_{-}^{\xi^{[n]*}}(z-y) = \tau^{[n]}_*(t,x,z)$, $\P$-a.s.~for any $z>y$. However, we have already observed that for a.e.~$z > y$ it must be $\tau^{\xi^{[n]*}}_{-}(z-y) = \tau^{\xi^{[n]*}}(z-y)$, $\P$-a.s., hence $\tau^{\xi^{[n]*}}(z-y) = \tau^{[n]}_*(t,x,z)$ as well. The latter, in particular implies
\begin{align*}
&w(t, x,y) - \Phi_n(t, x)
=- \int_{y}^{1} \E_{t,x}\Big[ c_0  e^{-r  \tau^{\xi^{[n]*}}(z-y)}+\int_{t}^{\tau^{\xi^{[n]*}}(z-y)} e^{-r s}\partial_y f(X^{[n]}_s,z)\ud s\Big]\ud z,
\end{align*}
by optimality of $\tau^{[n]}_*(t,x,z)$ in $u_n(t,x,z)$. Repeating the same steps as above we then find
\[
w(t,x,y)=J_n(t,x,y;\xi^{[n]*}),
\]
which combined with $v_n\le w$ concludes the proof.\hfill$\square$
\vspace{+5pt}

\noindent{\bf Proof of Lemma \ref{lem:tau-n}}.
We have $\cC^{[n]}\subset \cC^{[n+1]}\subset \cC$ because the sequence $(c_n)_{n\ge 0}$ is decreasing. Then, the sequence $(\tau^{[n]}_*)_{n\ge 0}$ is increasing and $\lim_{n\to\infty}\tau^{[n]}_*\le \tau_*$, $\P_{t,x,y}$-a.s.~for any $(t,x,y)\in[0,T]\times\Sigma$. In order to prove the reverse inequality, first we observe that $t\mapsto X^{[k]}_t(\omega)$ is continuous for all $\omega\in\Omega\setminus N_k$ with $\P(N_k)=0$, for all $k\ge 0$. Moreover, $t\mapsto \widetilde X_t(\omega)$ is continuous for all $\omega\in\Omega\setminus N$ with $\P(N)=0$. Let us set $N_0:=(\cup_{k}N_k)\cup N$ and $\Omega_0:=\Omega\setminus N_0$ so that $\P(\Omega_0)=1$. Fix $(t,x,y)\in[0,T]\times\Sigma$ and $\omega\in\Omega_0$. Let $\delta>0$ be such that $\tau_*(\omega)>\delta$ (if no such $\delta$ exists, then $\tau_*(\omega)=0$ and $\tau^{[n]}_*(\omega)\ge \tau_*(\omega)$ for all $n\ge 0$). Then, since $s\mapsto u(t+s,\widetilde X_{t+s}(\omega),y)$ is continuous, there must exist $\eps>0$ such that
\begin{align}\label{eq:st0}
\sup_{0\le s\le \delta}\left(u(t+s,\widetilde X_{t+s}(\omega),y)-c_0\right)\le -\eps.
\end{align}
At the same time we also notice that $s\mapsto u_n(t+s,X^{[n]}_{t+s}(\omega),y)$ is continuous and moreover
\[
u_n(t+s,X^{[n]}_{t+s}(\omega),y)\ge u_{n+1}(t+s,X^{[n]}_{t+s}(\omega),y)\ge u_{n+1}(t+s,X^{[n+1]}_{t+s}(\omega),y)
\]
by monotonicity of the sequences $(u_n)_{n\ge 0}$ and $(X^{[n]})_{n\ge 0}$ and of the map $x\mapsto u_n(t,x,y)$. So we have that $u_n(t+\cdot,X^{[n]}_\cdot(\omega),y)$ is a decreasing sequence of continuous functions of time and since the limit is also continuous, the convergence is uniform on $[0,\delta]$. Then, there exists $n_0\ge 0$ sufficiently large that
\[
\sup_{0\le s\le \delta}\left|u(t+s,\widetilde X_{t+s}(\omega),y)-u_n(t+s,X^{[n]}_{t+s}(\omega),y)\right|\le -\frac{\eps}{2}, \quad\text{for $n\ge n_0$}.
\]
Using this fact and \eqref{eq:st0} we have
\begin{align*}
\sup_{0\le s\le \delta}\left(u_n(t+s,X^{[n]}_{t+s}(\omega),y)-c_0\right)\le -\frac{\eps}{2}
\end{align*}
and $\tau^{[n]}_*(\omega)>\delta$, for all $n\ge n_0$. Since $\delta>0$ was arbitrary, we obtain
\[
\lim_{n\to\infty}\tau^{[n]}_*(\omega)\ge \tau_*(\omega).
\]
Recalling that $\omega\in\Omega_0$ was also arbitrary we obtain the desired result.\hfill$\square$
\vspace{+5pt}

\noindent{\bf Proof of Lemma \ref{lem:ST}}.
The proof is divided into two steps: first we show that $(t,x,y)\mapsto \tau_*(t,x,y)$ is lower semi-continuous and then that it is upper semi-continuous. Both parts of the proof rely on continuity of the flow $(t,x,s)\mapsto X^{*;\,t,x}_{t+s}(\omega)$. The latter holds for all $\omega\in\Omega\setminus N$ where $N$ is a universal set with $\P(N)=0$. For simplicity, in the rest of the proof we just write $X$ instead of $X^*$.
\vspace{+5pt}

{\em Step 1.} (Lower semi-continuity). This part of the proof is similar to that of Lemma \ref{lem:tau-n}. Fix $(t,x,y)\in[0,T]\times\Sigma$ and take a sequence $(t_n,x_n,y_n)_{n\ge 1}$ that converges to $(t,x,y)$ as $n\to \infty$. Denote $\tau_*=\tau_*(t,x,y)$ and $\tau_n:=\tau_*(t_n,x_n,y_n)$ and fix an arbitrary $\omega\in\Omega\setminus N$. If $(t,x,y)\in \cS$ then $\tau_*(\omega)=0$ and $\liminf_n \tau_n(\omega)\ge \tau_*(\omega)$ trivially. Let $\delta>0$ be such that $\tau_*(\omega)>\delta$. Then by continuity of the value function $u$ and of the trajectory $s\mapsto X^{t,x}_{t+s}(\omega)$ there must exist $\eps>0$ such that 
\[
\sup_{0\le s\le \delta}\big(u(t+s,X^{t,x}_{t+s}(\omega),y)-c_0\big)\le -\eps.
\]
Thanks to continuity of the stochastic flow there is no loss of generality in assuming that $(t_n+s,X^{t_n,x_n}_{t_n+s}(\omega),y_n)$ lies in a compact $K$ for all $n\ge 1$ and $s\le \delta$. Then there must exits $n_\eps>0$ such that 
\[
\sup_{0\le s\le \delta}\big|u(t+s,X^{t,x}_{t+s}(\omega),y)-u(t_n+s,X^{t_n,x_n}_{t_n+s}(\omega),y)\big|\le \eps/2
\]
for all $n\ge n_\eps$ (by uniform continuity). Combining the above we get 
\[
\sup_{n\ge n_\eps}\sup_{0\le s\le \delta}\big(u(t_n+s,X^{t_n,x_n}_{t_n+s}(\omega),y_n)-c_0\big)\le -\eps/2,
\]
which implies $\tau_n(\omega)>\delta$ for all $n\ge n_\eps$. Hence $\liminf_n\tau_n(\omega)>\delta$ and since $\delta$ and $\omega$ were arbitrary we conclude $\liminf_n\tau_n(\omega)\ge\tau_*(\omega)$, for all $\omega\in\Omega\setminus N$.
\vspace{+4pt}

{\em Step 2.} (Upper semi-continuity). For this part of the proof we need to introduce the {\em hitting time} $\sigma_*^\circ$ to the interior of the stopping set $\cS^\circ:=\text{int}(\cS)$ (which is not empty thanks to the argument of proof of Lemma \ref{lem:limitOS}), i.e.,
\[
\sigma^\circ_*(t,x,y):=\inf\{s\in(0,T-t]\,:\, (t+s,X^{t,x}_{t+s},y)\in\cS^\circ\}.
\]
Assume for a moment that
\begin{align}\label{usc}
\P_{t,x,y}(\tau_*=\sigma^\circ_*)=1\quad\text{for all $(t,x,y)\in[0,T]\times\Sigma^\circ$}.
\end{align} 
Then we can invoke Lemma 4 in \cite{deangelisPeskir} (see Eq.~(3.7) therein) to conclude that $(t,x,y)\mapsto \sigma^\circ_*(t,x,y)$ is upper semi-continuous. Hence, given $(t,x,y)\in[0,T]\times\Sigma^\circ$ and any sequence $(t_n,x_n,y_n)_{n\ge 1}$ converging to $(t,x,y)$ as $n\to\infty$, setting $\tau_n=\tau_*(t_n,x_n,y_n)$ and $\sigma_n^\circ=\sigma^\circ_*(t_n,x_n,y_n)$, we have $\tau_n(\omega)=\sigma^\circ_n(\omega)$ for all $\omega\in\Omega_n$ with $\P(\Omega_n)=1$ for each $n\ge 1$; therefore letting $\bar\Omega:=\cap_{n\ge 1}\Omega_n$ we have $\P(\bar \Omega)=1$ and 
\[
\limsup_{n\to\infty}\tau_n(\omega)=\limsup_{n\to\infty}\sigma^\circ_n(\omega)\le \sigma_*^\circ(\omega)=\tau_*(\omega),
\]
with $\tau_*=\tau_*(t,x,y)$ and $\sigma^\circ_*=\sigma^\circ_*(t,x,y)$, for all $\omega\in\bar\Omega\cap\{\tau_*=\sigma^\circ_*\}$ where $\P(\bar\Omega\cap\{\tau_*=\sigma^\circ_*\})=1$.

Let us now prove \eqref{usc}. We introduce the generalised left-continuous inverse of $x\mapsto c(t,x)$, i.e.
\[
b(t,y)=\sup\{x\in\R: c(t,x)<y\}.
\]
Then it is easy to check that $t\mapsto b(t,y)$ is non-increasing. This implies that $\P_{t,x,y}(\tau_*=\sigma^\circ_*)=1$ for all $(t,x,y)\in\cS^\circ$ by continuity of the paths of $X$. Moreover, for $(t,x,y)\in\cC$ we have $\sigma^\circ_*=\tau_*+\sigma^\circ_*\circ\theta_{\tau_*}$, where $\{\theta_t,\,t\ge 0\}$ is the shift operator, i.e., $(t,X_t(\omega))\circ\theta_s=(t+s,X_{t+s}(\omega))$.  Then, $\tau_*=\sigma^\circ_*$ if and only if $\sigma^\circ_*\circ\theta_{\tau_*}=0$. Since $\sigma^\circ_*\circ\theta_{\tau_*}$ is the hitting time to $\cS^\circ$ after the process $(t,X,y)$ has reached the boundary $\partial\cC$ of the continuation set, the previous condition is implied by $\P_{t,x,y}(\sigma^\circ_*=0)=1$ for $(t,x,y)\in\partial\cC$. So we now focus on proving the latter.

We claim that
\[
\cS^\circ=\{(t,x,y) \in(0,T)\times\Sigma^\circ:\,x>b(t,y)\}
\]
and will give a proof of this fact in Lemma \ref{lem:app} below.
Then by the law of iterated logarithm and non-increasing $t\mapsto b(t,y)$ we immediately obtain $\P_{t,x,y}(\sigma^\circ_*=0)=1$ for $(t,x,y)\in\partial\cC$ because $(t,x,y)\in\partial\cC \cap[(0,T)\times\Sigma^\circ]$ if and only if $x\ge b(t,y)$.\hfill$\square$

\begin{lemma}\label{lem:app}
We have 
\begin{align}\label{claim1}
\cS^\circ=\{(t,x,y) \in(0,T)\times\Sigma^\circ:\,x>b(t,y)\}.
\end{align}
\end{lemma}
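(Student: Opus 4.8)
The plan is to prove the two inclusions separately, using the monotonicity of the boundary $c$ in both variables together with the characterisation of $b$ as a left-continuous inverse. First I would establish the inclusion ``$\supseteq$''. Fix $(t,x,y)\in(0,T)\times\Sigma^\circ$ with $x>b(t,y)$. By definition of $b(t,y)=\sup\{x'\in\R:c(t,x')<y\}$, the condition $x>b(t,y)$ means that $c(t,x')\ge y$ for all $x'>b(t,y)$, and in particular there is a whole neighbourhood of $x$ on which $c(t,\,\cdot\,)\ge y$. Since $x\mapsto c(t,x)$ is non-decreasing (Proposition \ref{prop:OSboundary}) and $t\mapsto c(t,x)$ is non-decreasing as well, for $(s,x')$ in a suitable product neighbourhood $(t-\rho,t+\rho)\times(x-\rho,x+\rho)$ with $s\ge t$ we still have $c(s,x')\ge y$; moreover, using that $t\mapsto b(t,y)$ is non-increasing (as noted in the proof of Lemma \ref{lem:ST}), one checks that a full two-sided time neighbourhood works after shrinking $\rho$. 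Since $y<1$ we can also perturb $y$ slightly: $c(s,x')\ge y'>y-\rho'$ fails, so rather I would argue that $(s,x',y')\in\cS$ for all $(s,x',y')$ in the neighbourhood because $y'\le y\le c(s,x')$, i.e.\ $u(s,x',y')=c_0$. Hence $(t,x,y)$ is an interior point of $\cS$, so $(t,x,y)\in\cS^\circ$.

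Next I would prove the reverse inclusion ``$\subseteq$''. Let $(t,x,y)\in\cS^\circ$; then $(t,x,y)\in\cS$, so $y\le c(t,x)$. If in fact $y<c(t,x)$ we are essentially done, because then $x<b(t,y)$ is not what we want — here I must be careful: I want to show $x>b(t,y)$. Suppose, for contradiction, that $x\le b(t,y)$. Then by definition of $b(t,y)$ there is a sequence $x_k\uparrow$ with $x_k\le b(t,y)$ and $c(t,x_k)<y$; more directly, for every $x'<b(t,y)$ we may find $x''$ with $x'<x''<b(t,y)$ and $c(t,x'')<y$, i.e.\ $(t,x'',y)\in\cC$. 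In particular, in every left-neighbourhood of $b(t,y)$ (hence of $x$, if $x=b(t,y)$, or straddling $x$ if $x<b(t,y)$) there are points of $\cC$, so $(t,x,y)$ cannot lie in the interior of $\cS$ — contradicting $(t,x,y)\in\cS^\circ$. The one delicate sub-case is $x=b(t,y)$ with $c(t,x)\ge y$ (boundary point of $c$): here upper semi-continuity of $c$ ensures $(t,x,y)\in\cS=\overline{\cS\setminus\cC}$, but points immediately to the left of $x$ at the same $(t,y)$ lie in $\cC$, so again $(t,x,y)\notin\cS^\circ$. This forces $x>b(t,y)$, completing the inclusion.

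I expect the main obstacle to be the careful bookkeeping around the boundary points — specifically, handling the case $x=b(t,y)$ and verifying that the ``neighbourhood'' in the first inclusion can genuinely be taken open in all three coordinates $(t,x,y)$, not merely a one-sided-in-time set. This requires combining the monotonicity of $c$ in $t$ and in $x$ with the non-increasing behaviour of $t\mapsto b(t,y)$, and it is exactly the point where one uses that $\Sigma^\circ=\R\times(0,1)$ (so $y$ is bounded away from both endpoints and can be perturbed freely). The remaining arguments are soft and rely only on the already-established upper semi-continuity and monotonicity of $c$ and the representation $c(t,x)=\inf\{y:u(t,x,y)<c_0\}$ from Lemma \ref{lem:limitOS}; no new probabilistic input is needed.
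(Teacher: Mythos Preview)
Your approach has a genuine gap in the ``$\supseteq$'' direction, and it is precisely the non-trivial content of the lemma. Suppose $c(t_0,\cdot)$ has a flat spot at level $y_0$ on some interval $(x_1^0,x_2^0)$, i.e.\ $c(t_0,x)=y_0$ for all $x\in(x_1^0,x_2^0)$. Then $b(t_0,y_0)=x_1^0$, so any $x\in(x_1^0,x_2^0)$ satisfies $x>b(t_0,y_0)$ and hence lies in your right-hand set. Yet $(t_0,x,y_0)\notin\cS^\circ$: for every $\eps>0$ we have $c(t_0,x)=y_0<y_0+\eps$, so $(t_0,x,y_0+\eps)\in\cC$. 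Your attempted neighbourhood argument breaks down exactly here --- you write ``$y'\le y\le c(s,x')$'', but that only covers downward perturbations of $y$; for $y'>y$ you have no control unless the flat spot is ruled out. Equivalently, the obstruction is a jump $b(t_0,y_0)<b(t_0,y_0+)$ of the inverse boundary in $y$, and nothing in the monotonicity or upper semi-continuity of $c$ forbids this.

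The paper's proof confronts this obstruction directly and is substantially harder than a soft set-theoretic argument. It first establishes that $u(\cdot,y)$ solves the parabolic PDE $\partial_t u+\tfrac{\sigma^2}{2}\partial_{xx}u+a\,\partial_x u-ru=-\partial_y f$ a.e.\ in $\cC$ (this requires a regularisation of the time-discontinuous drift and a Sobolev-space limiting argument). Then, assuming a flat spot $(x_1^0,x_2^0)$ at level $y_0$ exists, it tests the PDE against a non-negative $\varphi\in C^\infty_c((x_1^0,x_2^0))$, uses $\partial_t u\ge 0$, and lets $y\downarrow y_0$ so that $u\to c_0$ on the segment. This yields $\partial_y f(x,y_0)-rc_0\le 0$ on $(x_1^0,x_2^0)$; combined with $\cS\subset[0,T]\times(\Sigma\setminus\cH)$ one gets $\partial_y f(x,y_0)-rc_0=0$ on an interval, contradicting the strict positivity $\partial_{xy}f>0$ from Assumption~\ref{ass:f}-(iii). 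So the lemma genuinely relies on the PDE structure and on $\partial_{xy}f>0$; your closing remark that ``no new probabilistic input is needed'' is incorrect.
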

\begin{proof}
While the claim may seem obvious, since $y\mapsto b(t,y)$ is non-decreasing, one should notice that for it to hold we must rule out the case $b(t,y)<b(t,y+)$ for all $(t,y)\in[0,T]\times[0,1)$. Indeed, if the latter occurs for some $(t_0,y_0) \in(0,T)\times(0,1)$ we have $\{t_0\}\times (b(t_0,y_0),b(t_0,y_0+))\times\{y_0\}\in\partial\cC$ and \eqref{claim1} fails. 

We proceed in two steps.
\vspace{+4pt}

{\em Step 1}. (A PDE for the value function). Since $(t,x)\mapsto a(x,m^*(t))$ is not continuous in general we cannot immediately apply standard PDE arguments that guarantee that 
\begin{align}\label{eq:PDE-C}
\partial_t u+\frac{\sigma^2(\,\cdot\,)}{2}\partial_{xx} u + a(\,\cdot,m^*(\,\cdot\,))\partial_x u-r u =-\partial_y f,\quad\text{for $(t,x,y)\in\cC$}
\end{align}
\citep[see][Chapter III]{peskirShyriyaev}. However, we show that for any $y\in[0,1]$, $u(\cdot,y)$ solves \eqref{eq:PDE-C} in the a.e.\ sense. Fix $(t,x,y)\in\cC$ and let $\cO$ be an open rectangle in $[0,T]\times\R$ containing $(t,x)$ with parabolic boundary $\partial_P\cO$ such that $\overline{\cO}\times\{y\}\subset \cC$. Let
\begin{align}\label{eq:tauO}
\quad\tau_{\cO}=\inf\{s\ge 0:(t+s,X^*_{t+s})\notin\cO\}\ \ \text{and}\ \ \tau_{\overline\cO}=\inf\{s\ge 0:(t+s,X^*_{t+s})\notin\overline\cO\}.
\end{align} 
Since $\inf_{(t,x)\in\overline\cO}\sigma(x)>0$, then it is not difficult to verify that $\P_{t,x}(\tau_{\cO}=\tau_{\overline \cO})=1$ for all $(t,x)\in\cO$.

Let us now regularise the drift of $X^*$ by introducing
\[
a^n(x,t)=n\int_{t-1/n}^t a(x,m^*(s))\ud s.
\]
Notice that $|a^n(x,t)-a^n(x',t)|\le L|x-x'|$ with the same constant $L$ as in Assumption \ref{ass:SDE}. Moreover, $a^n$ is locally bounded on any compact by a constant independent of $n$.  
By Lipschitz continuity of $a$ it is immediate to verify
\begin{align}\label{eq:an}
\sup_{x\in\R}\big|a^n(x,t)-a(x,m^*(t))\big|\le L\ n\int_{t-1/n}^t\big|m^*(s)-m^*(t)\big|\ud s,
\end{align}
so that $a^n(\cdot,t)\to a(\cdot,m^*(t))$ when $n \to \infty$, for each $t$ and uniformly in $x$, by left-continuity of $m^*$. Denoting by $X^n$ the unique strong solution of \eqref{eq:dynamics_mfg_03} with the drift $a(\cdot)$ replaced by $a^n(\cdot)$, thanks to \eqref{eq:an} we have 
\begin{align*}
\lim_{n\to\infty}\E_{t,x}\Big[\sup_{0\le s\le T-t}\big|X^n_{t+s}-X^*_{t+s}\big|^2\Big]=0.
\end{align*} 
Then, there is a subsequence $(n_j)_{j\in\N}$ for which 
\begin{align}\label{eq:XnX}
\lim_{j\to\infty}\sup_{0\le s\le T-t}\big|X^{n_j}_{t+s}-X^*_{t+s}\big|^2=0,\quad \P_{t,x}-a.s.
\end{align}
Letting $\tau^n_{\cO}$ and $\tau^n_{\overline\cO}$ be the analogues of \eqref{eq:tauO} but with $X^n$ instead of $X^*$, we have 
\begin{align}\label{eq:taunO}
\tau_\cO\le \liminf_{j\to\infty}\tau^{n_j}_\cO\le \limsup_{j\to\infty}\tau^{n_j}_{\overline\cO}\le \tau_{\overline\cO}=\tau_{\cO},\quad\P_{t,x}-a.s.
\end{align}
thanks to \eqref{eq:XnX} and arguments as in the proof of Lemma \ref{lem:ST} (step 1 for lower semi-continuity of $\tau^{n_j}_\cO$ and step 2 for upper semi-continuity of $\tau^{n_j}_{\overline\cO}$). 

Thanks to the improved regularity of $a^n$, for each $y\in[0,1]$ there exists a unique solution $u^n(\cdot,y)\in C^{1,2}(\cO)\cap C(\overline\cO)$ of the boundary value problem
\begin{equation}\label{eq:PDEn}
\begin{split}
&\big(\partial_tu^n+\tfrac{\sigma^2}{2}(\cdot)\partial_{xx}u^n+a^n(\cdot)\partial_xu^n -ru^n \big)(t,x,y)=-\partial_y f(t,x,y),\quad (t,x)\in\cO,\\
&u^n(t,x,y)=u(t,x,y),\quad(t,x)\in\partial_P\cO.
\end{split}
\end{equation}
By Feynman-Kac formula and \eqref{eq:PDEn} we have
\[
u^n(t,x,y)=\E_{t,x}\Big[e^{-r\tau^n_{\cO}}u(t+\tau^n_{\cO},X^n_{t+\tau^n_{\cO}},y)+\int_0^{\tau^n_{\cO}}e^{-rs}\partial_y f(X^n_{t+s},y)\ud s\Big].
\]
Taking limits along the subsequence $(n_j)_{j\in\N}$, using \eqref{eq:XnX}, \eqref{eq:taunO} and dominated convergence, we obtain
\begin{equation}\label{eq:limunj-u}
\begin{split}
&\lim_{j\to\infty}u^{n_j}(t,x,y)\\
&=\E_{t,x}\Big[e^{-r\tau_{\cO}}u(t+\tau_{\cO},X^*_{t+\tau_{\cO}},y)+\int_0^{\tau_{\cO}}e^{-rs}\partial_y f(X^*_{t+s},y)\ud s\Big]\\
&=u(t,x,y),
\end{split}
\end{equation}
where the final equality holds because $u$ is an harmonic function in $\cO$ by standard optimal stopping theory.

Thanks to \cite[Thm.\ 5, Ch.\ 5, Sec.\ 2]{krylov2008lectures}, for any compact $K\subset\cO$ and $p\in[2,\infty)$ there is a constant $c>0$ depending on $K$ and $p$, on $L$ from Assumption \ref{ass:SDE} and on $\|u(\cdot,y)\|_{L^\infty(K)}$, $\|a\|_{L^\infty(K)}$ and $\|\sigma\|_{L^\infty(K)}$, such that $\|u^n(\cdot,y)\|_{W^{1,2;p}(K)}\le c$ for all $n\in\N$. Here $W^{1,2;p}(K)$ is the Sobolev space of functions $\varphi\in L^p(K)$ whose weak derivatives $\partial_t\varphi$, $\partial_x\varphi$, $\partial_{xx}\varphi$ also belong to $L^p(K)$. Then, $u^{n_j}(\cdot,y)\to \bar u(\cdot,y)$ weakly in $W^{1,2;p}(K)$ as $j\to\infty$. By uniqueness of the limit and \eqref{eq:limunj-u} we have $\bar u(\cdot,y)=u(\cdot,y)\in W^{1,2;p}(K)$. Thus, by arbitrariness of $\cO$ and $K$ we have $u(\cdot,y)\in W^{1,2;p}_{\ell oc}(\cC)$. Passing to the (weak) limit in \eqref{eq:PDEn} and using \eqref{eq:an} we have 
\begin{align}\label{eq:PDE}
\big(\partial_t u+\tfrac{\sigma^2(\,\cdot\,)}{2}\partial_{xx} u + a(\,\cdot,m^*(\,\cdot\,))\partial_x u-r u\big)(\cdot,y) =-\partial_y f(\cdot,y),\quad\text{a.e.\ in $\cO$}.
\end{align} 

Finally, we notice the Sobolev compact embedding $W^{1,2;p}_{\ell oc}(\cC)\hookrightarrow C^{0,1;\alpha}_{\ell oc}(\cC)$ for $\alpha=1-3/p$ with $p>3$, where $C^{0,1;\alpha}$ is the class of $\alpha$-H\"older continuous function with H\"older continuous spatial derivative. We will later use that $\partial_x u(\cdot,y)$ is continuous in $\cC$.
\vspace{+4pt}

{\em Step 2}. (Contradiction). Here we use an argument by contradiction inspired to \cite{de2015note}. Assume that there exists $(t_0,y_0)\in(0,T)\times(0,1)$ such that $x^0_1:=b(t_0,y_0)<b(t_0,y_0+)=:x^0_2$. Let $\cO\times(y_0,y_0+\delta)\subset\cC$ and $\{t_0\}\times(x^0_1,x^0_2)\subset\cO$.
Pick an arbitrary $\varphi\in C^{\infty}_c((x^0_1,x^0_2))$, $\varphi \geq 0$ and multiply \eqref{eq:PDE} by $\varphi$. Since $t\mapsto u(t,x,y)$ is non-decreasing we have $\partial_t u(\,\cdot\,,y)\ge 0$ a.e.\ on $\cO$ and therefore, for each $y\in(y_0,y_0+\delta)$ we have 
\begin{align*}
\varphi(\cdot)\left[\frac{\sigma^2(\cdot)}{2}\partial_{xx}u(\cdot,y)  + a(\cdot, m^*(\cdot))\partial_{x}u(\cdot, y)-r u(\cdot, y)\right]\leq -\varphi(\cdot)\partial_{y} f(\cdot, y),\ \text{a.e.\ in $\cO$.}
\end{align*}
By monotonicity of $t\mapsto b(t,y)$, we have $[0,t_0)\times(x^0_1,x^0_2)\times(y_0,y_0+\delta)\subset\cC$. Then, for arbitrary $0\le t< t_0$ and $0< h<t_0-t$, using integration by parts we obtain\footnote{Notice that the argument seems to require $\sigma\in C^2(\R)$. However, given that the final estimate \eqref{final} does not depend on $\sigma$ we can apply a smoothing of $\sigma$, if necessary, and then pass to the limit at the end.}
\begin{align*}
&\frac{1}{h}\int_{t}^{t+h}\int_{x^0_1}^{x^0_2} \left( \tfrac{1}{2}\partial_{xx}\big[\sigma^2(x)\varphi (x)\big]  + a(x, m^*(s))\varphi(x)\partial_x-r\varphi(x) \right)u(s, x, y) \ud x \ud s \\
& \leq - \frac{1}{h}\int_{t}^{t+h}\int_{x^0_1}^{x^0_2} \partial_y f(x, y)\varphi(x)\ud x \ud s. 
\end{align*}
Thanks to continuity of $u(\cdot,y)$, $\partial_x u(\cdot,y)$ and $a(\cdot)$, and right-continuity of $m^*(\cdot)$, letting $h\downarrow 0$ we obtain
\begin{align*}
&\int_{x^0_1}^{x^0_2} \left( \tfrac{1}{2}\partial_{xx}\big[\sigma^2(x)\varphi (x)\big]  + a(x, m^*(t))\varphi(x)\partial_x-r\varphi(x) \right)u(t, x, y) \ud x \ud s \\
& \leq - \int_{x^0_1}^{x^0_2} \partial_y f(x, y)\varphi(x)\ud x \ud s. 
\end{align*}
Recall that $\{t_0\}\times(x^0_1,x^0_2)\times(y_0,y_0+\delta)\subset\cC$. Then we can let $t\uparrow t_0$ and integrate by parts the term with $\partial_x u$ to obtain
\begin{align*}
&\int_{x^0_1}^{x^0_2} \left( \tfrac{1}{2}\partial_{xx}\big[\sigma^2(x)\varphi (x)\big]  - \partial_x\big[a(x, m^*(t_0-))\varphi(x)\big]-r\varphi(x) \right)u(t_0, x, y) \ud x \\
& \leq - \int_{x^0_1}^{x^0_2} \partial_y f(x, y)\varphi(x)\ud x,
\end{align*}
where $m^*(t_0-)=\lim_{t\uparrow t_0}m^*(t)$.
Now, letting $y\downarrow y_0$, using dominated convergence and $u(t_0,x,y_0)=c_0$ for $x\in(x^0_1,x^0_2)$, and undoing the integration by parts we obtain
\begin{equation}\label{final}
\int_{x^0_1}^{x^0_2}(\partial_yf(x,y_0)-rc_0)\varphi(x)\ud x\leq 0.
\end{equation}
Hence $\partial_yf(x,y_0)-rc_0\leq 0$ for all $x\in(x^0_1,x^0_1)$ by arbitrariness of $\varphi\geq 0$ and continuity of $x\mapsto\partial_yf(x,y_0)-rc_0$. However, since $\cS\subseteq [0,T]\times(\Sigma\setminus\cH)$ (recall \eqref{def-H}), then it must be $\partial_yf(x,y_0)-rc_0= 0$ for all $x\in(x^0_1,x^0_1)$, which contradicts $\partial_{xy}f>0$ (Assumption \ref{ass:f}-(iii)).
\end{proof}
{
\bibliographystyle{Chicago}
\bibliography{goodwillBib}

\begin{thebibliography}{}

\bibitem[\protect\citeauthoryear{Baldi}{Baldi}{2017}]{baldi}
Baldi, P. (2017).
\newblock {\em Stochastic {C}alculus: {A}n {I}ntroduction {T}hrough {T}heory
  and {E}xercises}.
\newblock Springer {I}nternational {P}ublishing.

\bibitem[\protect\citeauthoryear{Baldursson and Karatzas}{Baldursson and
  Karatzas}{1996}]{baldursson1996irreversible}
Baldursson, F.~M. and I.~Karatzas (1996).
\newblock Irreversible investment and industry equilibrium.
\newblock {\em Finance Stoch.\/}~{\em 1\/}(1), 69--89.

\bibitem[\protect\citeauthoryear{Basei, Cao, and Guo}{Basei
  et~al.}{2019}]{basei2019nonzero}
Basei, M., H.~Cao, and X.~Guo (2019).
\newblock Nonzero-sum stochastic games with impulse controls.
\newblock {\em arXiv:1901.08085\/}.

\bibitem[\protect\citeauthoryear{Bather and Chernoff}{Bather and
  Chernoff}{1967}]{bather1967sequential}
Bather, J. and H.~Chernoff (1967).
\newblock Sequential decisions in the control of a spaceship.
\newblock In {\em Fifth Berkeley Symposium on Mathematical Statistics and
  Probability}, Volume~3, pp.\  181--207.

\bibitem[\protect\citeauthoryear{Bene\v{s}, Shepp, and Witsenhausen}{Bene\v{s}
  et~al.}{1980}]{benevs1980some}
Bene\v{s}, V., L.~Shepp, and H.~Witsenhausen (1980).
\newblock Some solvable stochastic control problems.
\newblock {\em Stochastics\/}~{\em 4\/}(1), 39--83.

\bibitem[\protect\citeauthoryear{Bertucci}{Bertucci}{2020}]{bertucci2020fokker}
Bertucci, C. (2020).
\newblock Fokker-{P}lanck equations of jumping particles and mean field games
  of impulse control.
\newblock {\em Ann.\ Inst.\ {H}.\ {P}oincar\'e {A}nal.\ Non Lin\'eaire\/}~{\em
  37\/}(5), 1211--1244.

\bibitem[\protect\citeauthoryear{Buratto and Viscolani}{Buratto and
  Viscolani}{2002}]{buratto2002new}
Buratto, A. and B.~Viscolani (2002).
\newblock New product introduction: goodwill, time and advertising cost.
\newblock {\em Math. {M}ethods {O}per. {R}es.\/}~{\em 55\/}(1), 55--68.

\bibitem[\protect\citeauthoryear{Caffarelli and Salsa}{Caffarelli and
  Salsa}{2005}]{caffarelli2005geometric}
Caffarelli, L.~A. and S.~Salsa (2005).
\newblock {\em A geometric approach to free boundary problems}, Volume~68.
\newblock American Mathematical Soc.

\bibitem[\protect\citeauthoryear{Cao and Guo}{Cao and Guo}{2020}]{cao2020mfgs}
Cao, H. and X.~Guo (2020).
\newblock {MFG}s for partially reversible investment.
\newblock {\em Stoch.\ {P}rocess.\ {A}ppl. (In Press)\/}.

\bibitem[\protect\citeauthoryear{Cao, Guo, and Lee}{Cao
  et~al.}{2017}]{cao2017approximation}
Cao, H., X.~Guo, and J.~S. Lee (2017).
\newblock Approximation of mean field games to {N}-player stochastic games,
  with singular controls.
\newblock {\em arXiv:1703.04437\/}.

\bibitem[\protect\citeauthoryear{Cardaliaguet}{Cardaliaguet}{2012}]{cardaliaguet2012notes}
Cardaliaguet, P. (2012).
\newblock Notes from {P}.-{L}. {L}ions' lectures at the {C}oll{\`e}ge de
  {F}rance.
\newblock Technical report.

\bibitem[\protect\citeauthoryear{Cardaliaguet and Hadikhanloo}{Cardaliaguet and
  Hadikhanloo}{2017}]{cardaliaguet2017learning}
Cardaliaguet, P. and S.~Hadikhanloo (2017).
\newblock Learning in mean field games: the fictitious play.
\newblock {\em ESAIM {C}ontrol {O}ptim. {C}alc. {V}ar.\/}~{\em 23\/}(2),
  569--591.

\bibitem[\protect\citeauthoryear{Cardaliaguet and Lehalle}{Cardaliaguet and
  Lehalle}{2018}]{cardaliaguet2018mean}
Cardaliaguet, P. and C.-A. Lehalle (2018).
\newblock Mean field game of controls and an application to trade crowding.
\newblock {\em Math. {F}inanc. {E}con.\/}~{\em 12\/}(3), 335--363.

\bibitem[\protect\citeauthoryear{Carmona and Delarue}{Carmona and
  Delarue}{2013a}]{carmona2013mean}
Carmona, R. and F.~Delarue (2013a).
\newblock Mean field forward-backward stochastic differential equations.
\newblock {\em Electron.\ {C}omm.\ {P}robab.\/}~{\em 18\/}(68), 1--15.

\bibitem[\protect\citeauthoryear{Carmona and Delarue}{Carmona and
  Delarue}{2013b}]{carmona2013probabilistic}
Carmona, R. and F.~Delarue (2013b).
\newblock Probabilistic analysis of mean-field games.
\newblock {\em SIAM {J}. {C}ontrol {O}ptim.\/}~{\em 51\/}(4), 2705--2734.

\bibitem[\protect\citeauthoryear{Carmona and Delarue}{Carmona and
  Delarue}{2018}]{carmona2018probabilistic}
Carmona, R. and F.~Delarue (2018).
\newblock {\em Probabilistic Theory of Mean Field Games with Applications
  I-II}, Volume~83.
\newblock Springer {I}nternational {P}ublishing.

\bibitem[\protect\citeauthoryear{Carmona, Delarue, and Lacker}{Carmona
  et~al.}{2016}]{carmona2016mean}
Carmona, R., F.~Delarue, and D.~Lacker (2016).
\newblock Mean field games with common noise.
\newblock {\em Ann. {A}ppl. {P}robab.\/}~{\em 44\/}(6), 3740--3803.

\bibitem[\protect\citeauthoryear{Carmona and Lacker}{Carmona and
  Lacker}{2015}]{carmona2015probabilistic}
Carmona, R. and D.~Lacker (2015).
\newblock A probabilistic weak formulation of mean field games and
  applications.
\newblock {\em Ann. {A}ppl. {P}robab.\/}~{\em 25\/}(3), 1189--1231.

\bibitem[\protect\citeauthoryear{De~Angelis}{De~Angelis}{2015}]{de2015note}
De~Angelis, T. (2015).
\newblock A note on the continuity of free-boundaries in finite-horizon optimal
  stopping problems for one-dimensional diffusions.
\newblock {\em SIAM {J}. {C}ontrol {O}ptim.\/}~{\em 53\/}(1), 167--184.

\bibitem[\protect\citeauthoryear{De~Angelis, Federico, and Ferrari}{De~Angelis
  et~al.}{2017}]{de2017optimal}
De~Angelis, T., S.~Federico, and G.~Ferrari (2017).
\newblock Optimal boundary surface for irreversible investment with stochastic
  costs.
\newblock {\em Math. {O}per. {R}es.\/}~{\em 42\/}(4), 1135--1161.

\bibitem[\protect\citeauthoryear{De~Angelis and Peskir}{De~Angelis and
  Peskir}{2020}]{deangelisPeskir}
De~Angelis, T. and G.~Peskir (2020).
\newblock Global ${C}^{1}$ regularity of the value function in optimal stopping
  problems.
\newblock {\em Ann. {A}ppl. {P}robab.\/}~{\em 30\/}(3), 1007--1031.

\bibitem[\protect\citeauthoryear{De~Angelis and Stabile}{De~Angelis and
  Stabile}{2019}]{deangelisStabile}
De~Angelis, T. and G.~Stabile (2019).
\newblock On {L}ipschitz continuous optimal stopping boundaries.
\newblock {\em SIAM J. {C}ontrol {O}ptim.\/}~{\em 57\/}(1), 402--436.

\bibitem[\protect\citeauthoryear{Dianetti, Ferrari, Fischer, and
  Nendel}{Dianetti et~al.}{2020}]{dianetti2019submodular}
Dianetti, J., G.~Ferrari, M.~Fischer, and M.~Nendel (2020).
\newblock Submodular mean field games: {E}xistence and approximation of
  solutions.
\newblock {\em Ann.\ {A}ppl.\ {P}robab. (arXiv:1907.10968)\/}.

\bibitem[\protect\citeauthoryear{El~Karoui and Karatzas}{El~Karoui and
  Karatzas}{1988}]{karoui1988}
El~Karoui, N. and I.~Karatzas (1988).
\newblock Probabilistic aspects of finite-fuel, reflected follower problems.
\newblock {\em Acta {A}ppl. {M}ath.\/}~{\em 11\/}(3), 223--258.

\bibitem[\protect\citeauthoryear{El~Karoui and Karatzas}{El~Karoui and
  Karatzas}{1991}]{karoui1991new}
El~Karoui, N. and I.~Karatzas (1991).
\newblock A new approach to the {S}korohod problem, and its applications.
\newblock {\em Stochastics and Stochastic Reports\/}~{\em 34\/}(1-2), 57--82.

\bibitem[\protect\citeauthoryear{Ferrari, Riedel, and Steg}{Ferrari
  et~al.}{2017}]{ferrari2017continuous}
Ferrari, G., F.~Riedel, and J.-H. Steg (2017).
\newblock Continuous-time public good contribution under uncertainty: a
  stochastic control approach.
\newblock {\em Appl. {M}ath. Optim.\/}~{\em 75\/}(3), 429--470.

\bibitem[\protect\citeauthoryear{Friedman}{Friedman}{1988}]{F88}
Friedman, A. (1988).
\newblock {\em Variational principles and free-boundary problems\/} (Second
  ed.).
\newblock Robert E. Krieger Publishing Co., Inc., Malabar, FL.

\bibitem[\protect\citeauthoryear{Fu}{Fu}{2019}]{fu2019extended}
Fu, G. (2019).
\newblock Extended mean field games with singular controls.
\newblock {\em arXiv:1909.04154\/}.

\bibitem[\protect\citeauthoryear{Fu and Horst}{Fu and Horst}{2017}]{fu2017mean}
Fu, G. and U.~Horst (2017).
\newblock Mean field games with singular controls.
\newblock {\em SIAM {J}. {C}ontrol {O}ptim.\/}~{\em 55\/}(6), 3833--3868.

\bibitem[\protect\citeauthoryear{Gu{\'e}ant}{Gu{\'e}ant}{2012}]{gueant2012mean}
Gu{\'e}ant, O. (2012).
\newblock Mean field games equations with quadratic {H}amiltonian: a specific
  approach.
\newblock {\em Math. {M}odels {M}ethods {A}ppl. {S}ci.\/}~{\em 22\/}(09),
  1250022.

\bibitem[\protect\citeauthoryear{Guo, Tang, and Xu}{Guo
  et~al.}{2018}]{guo2018class}
Guo, X., W.~Tang, and R.~Xu (2018).
\newblock A class of stochastic games and moving free boundary problems.
\newblock {\em arXiv preprint arXiv:1809.03459\/}.

\bibitem[\protect\citeauthoryear{Guo and Xu}{Guo and
  Xu}{2019}]{guo2019stochastic}
Guo, X. and R.~Xu (2019).
\newblock Stochastic {G}ames for {F}uel {F}ollower {P}roblem: $n$ versus {M}ean
  {F}ield {G}ame.
\newblock {\em SIAM {J}. {C}ontrol {O}ptim.\/}~{\em 57\/}(1), 659--692.

\bibitem[\protect\citeauthoryear{Hu, {\O}ksendal, and Sulem}{Hu
  et~al.}{2014}]{hu2014singular}
Hu, Y., B.~{\O}ksendal, and A.~Sulem (2014).
\newblock Singular mean-field control games with applications to optimal
  harvesting and investment problems.
\newblock {\em arXiv:1406.1863\/}.

\bibitem[\protect\citeauthoryear{Huang, Malham{\'e}, and Caines}{Huang
  et~al.}{2006}]{huang2006large}
Huang, M., R.~P. Malham{\'e}, and P.~E. Caines (2006).
\newblock Large population stochastic dynamic games: closed-loop
  {M}c{K}ean-{V}lasov systems and the {N}ash certainty equivalence principle.
\newblock {\em Commun. {I}nf. {S}yst.\/}~{\em 6\/}(3), 221--252.

\bibitem[\protect\citeauthoryear{Jack, Johnson, and Zervos}{Jack
  et~al.}{2008}]{jack2008singular}
Jack, A., T.~C. Johnson, and M.~Zervos (2008).
\newblock A singular control model with application to the goodwill problem.
\newblock {\em Stoch.\ {P}rocess.\ {A}ppl.\/}~{\em 118\/}(11), 2098--2124.

\bibitem[\protect\citeauthoryear{Karatzas}{Karatzas}{1985}]{karatzas1985}
Karatzas, I. (1985).
\newblock Probabilistic aspects of finite-fuel stochastic control.
\newblock {\em Proc.\ Natl.\ Acad.\ Sci.\ USA\/}~{\em 82\/}(17), 5579--5581.

\bibitem[\protect\citeauthoryear{Karatzas and Shreve}{Karatzas and
  Shreve}{1984}]{karatzaShreve1984}
Karatzas, I. and S.~E. Shreve (1984).
\newblock Connections between optimal stopping and singular stochastic control
  {I}. {Monotone} follower problems.
\newblock {\em SIAM {J}. {C}ontrol {O}ptim.\/}~{\em 22\/}(6), 856--877.

\bibitem[\protect\citeauthoryear{Karatzas and Shreve}{Karatzas and
  Shreve}{1985}]{karatzasShreve1985}
Karatzas, I. and S.~E. Shreve (1985).
\newblock Connections between optimal stopping and singular stochastic control
  {II}. {Reflected} follower problems.
\newblock {\em SIAM {J}. {C}ontrol {O}ptim.\/}~{\em 23\/}(3), 433--451.

\bibitem[\protect\citeauthoryear{Karatzas and Shreve}{Karatzas and
  Shreve}{1998}]{karatzasShreve}
Karatzas, I. and S.~E. Shreve (1998).
\newblock {\em Brownian {M}otion and {S}tochastic {C}alculus}, Volume 113.
\newblock Springer-Verlag New York.

\bibitem[\protect\citeauthoryear{Kotlow}{Kotlow}{1973}]{K74}
Kotlow, D.~B. (1973).
\newblock A free boundary problem connected with the optimal stopping problem
  for diffusion processes.
\newblock {\em Trans. Amer. Math. Soc.\/}~{\em 184}, 457--478 (1974).

\bibitem[\protect\citeauthoryear{Kruse and Deely}{Kruse and
  Deely}{1969}]{kruse}
Kruse, R. and J.~Deely (1969).
\newblock Joint continuity of monotonic functions.
\newblock {\em Amer. Math. Monthly\/}~{\em 76\/}(1), 74--76.

\bibitem[\protect\citeauthoryear{Krylov}{Krylov}{2008}]{krylov2008lectures}
Krylov, N.~V. (2008).
\newblock {\em Lectures on elliptic and parabolic equations in {S}obolev
  spaces}, Volume~96.
\newblock American Mathematical Soc.

\bibitem[\protect\citeauthoryear{Lacker and Zariphopoulou}{Lacker and
  Zariphopoulou}{2019}]{lacker2019mean}
Lacker, D. and T.~Zariphopoulou (2019).
\newblock Mean field and {N}-agent games for optimal investment under relative
  performance criteria.
\newblock {\em Math. {F}inance\/}~{\em 29\/}(4), 1003--1038.

\bibitem[\protect\citeauthoryear{Lasry and Lions}{Lasry and
  Lions}{2007}]{lasry2007mean}
Lasry, J.-M. and P.-L. Lions (2007).
\newblock Mean field games.
\newblock {\em Japanese {J}. {M}ath.\/}~{\em 2\/}(1), 229--260.

\bibitem[\protect\citeauthoryear{Marinelli}{Marinelli}{2007}]{marinelli2007stochastic}
Marinelli, C. (2007).
\newblock The stochastic goodwill problem.
\newblock {\em European J.\ Oper.\ Res.\/}~{\em 176\/}(1), 389--404.

\bibitem[\protect\citeauthoryear{Peskir and Shiryaev}{Peskir and
  Shiryaev}{2006}]{peskirShyriyaev}
Peskir, G. and A.~Shiryaev (2006, 01).
\newblock {\em Optimal {S}topping and {F}ree-{B}oundary {P}roblems}.
\newblock Birkh\"{a}user: Basel.

\bibitem[\protect\citeauthoryear{Protter}{Protter}{1990}]{protter2005stochastic}
Protter, P.~E. (1990).
\newblock {\em Stochastic integration and differential equations}, Volume~21 of
  {\em Applications of Mathematics (New York)}.
\newblock Springer-Verlag, Berlin.

\bibitem[\protect\citeauthoryear{Revuz and Yor}{Revuz and
  Yor}{1999}]{revuz2013continuous}
Revuz, D. and M.~Yor (1999).
\newblock {\em Continuous {M}artingales and {B}rownian {M}otion\/} (Third ed.),
  Volume 293 of {\em Grundlehren der mathematischen Wissenschaften}.
\newblock Springer-Verlag, Berlin.

\bibitem[\protect\citeauthoryear{Soner and Shreve}{Soner and
  Shreve}{1991a}]{soner1989}
Soner, H. and S.~Shreve (1991a).
\newblock A free boundary problem related to singular stochastic control.
\newblock In {\em Applied stochastic analysis ({L}ondon, 1989)}, Volume~5 of
  {\em Stochastics Monogr.}, pp.\  265--301. Gordon and Breach, New York.

\bibitem[\protect\citeauthoryear{Soner and Shreve}{Soner and
  Shreve}{1991b}]{soner1991free}
Soner, H.~M. and S.~E. Shreve (1991b).
\newblock A free boundary problem related to singular stochastic control: the
  parabolic case.
\newblock {\em Comm.\ {P}artial {D}ifferential {E}quations\/}~{\em 16\/}(2-3),
  373--424.

\bibitem[\protect\citeauthoryear{Van~Moerbeke}{Van~Moerbeke}{1975}]{VM75}
Van~Moerbeke, P. (1975).
\newblock On optimal stopping and free boundary problems.
\newblock {\em Arch. Rational Mech. Anal.\/}~{\em 60\/}(2), 101--148.

\bibitem[\protect\citeauthoryear{Williams, Chow, and Menaldi}{Williams
  et~al.}{1994}]{williams1994regularity}
Williams, S.~A., P.~Chow, and J.-L. Menaldi (1994).
\newblock Regularity of the free boundary in singular stochastic control.
\newblock {\em J.\ Differential Equations\/}~{\em 111\/}(1), 175--201.

\bibitem[\protect\citeauthoryear{Zhang}{Zhang}{2012}]{zhang2012relaxed}
Zhang, L. (2012).
\newblock The relaxed stochastic maximum principle in the mean-field singular
  controls.
\newblock {\em arXiv:1202.4129\/}.

\bibitem[\protect\citeauthoryear{Zhou and Huang}{Zhou and
  Huang}{2017}]{zhou2017mean}
Zhou, M. and M.~Huang (2017).
\newblock Mean field games with {P}oisson point processes and impulse control.
\newblock In {\em 2017 IEEE 56th Annual Conference on Decision and Control
  (CDC)}, pp.\  3152--3157. IEEE.

\end{thebibliography}
}
\end{document}